\documentclass[11pt]{article}
\usepackage{amsmath,amsthm,amsfonts,amssymb,mathrsfs,bm,graphicx,stmaryrd}
\usepackage{mathtools}
\usepackage[usenames]{color}
\usepackage[colorlinks=true,linkcolor=blue]{hyperref}
\usepackage[nottoc]{tocbibind}
\usepackage[letterpaper,hmargin=1.0in,vmargin=1.0in]{geometry}
\parindent	1pc
\parskip 	\smallskipamount
\usepackage{tocstyle}

\newtheorem{theorem}{Theorem}[section]
\newtheorem{lemma}[theorem]{Lemma}
\newtheorem{corollary}[theorem]{Corollary}
\newtheorem{proposition}[theorem]{Proposition}

\newtheorem{remark}[theorem]{Remark}

\newtheorem{maintheorem}{Theorem}

\def\ind{{\mathbf 1}}
\def\N{\mathbb{N}}

\def\P{\mathbb{P}}
\def\Z{\mathbb{Z}}
\def\R{\mathbb{R}}

\def\E{\mathbb{E}}
\def\l{\ell}

\newcommand{\cZ}{\mathcal{Z}}

\setcounter{tocdepth}{1}

\newcommand{\Addresses}{{
  \bigskip
  \footnotesize

  Riddhipratim Basu, \textsc{International Centre for Theoretical Sciences, Tata Institute of Fundamental Research, Bangalore, India}\par\nopagebreak
  \textit{E-mail address}: \texttt{rbasu@icts.res.in}

  \medskip

  Sourav Sarkar, \textsc{Department of Statistics, University of California, Berkeley}\par\nopagebreak
  \textit{E-mail address}: \texttt{souravs@berkeley.edu}

  \medskip

  Allan Sly, \textsc{Department of Mathematics, Princeton University, Princeton, NJ, USA}\par\nopagebreak
  \textit{E-mail address}: \texttt{allansly@princeton.edu}

}}

\begin{document}
\title{Coalescence of Geodesics in Exactly Solvable Models of Last Passage Percolation}

\author{Riddhipratim Basu
 \and
Sourav Sarkar
\and
Allan Sly
}

\date{}
\maketitle

\begin{abstract}
Coalescence of semi-infinite geodesics remains a central question in planar first passage percolation. In this paper we study finer properties of the coalescence structure of finite and semi-infinite geodesics for exactly solvable models of last passage percolation. Consider directed last passage percolation on $\Z^2$ with i.i.d.\ exponential weights on the vertices. Fix two points $v_1=(0,0)$ and $v_2=(0, \lfloor k^{2/3} \rfloor)$ for some $k>0$, and consider the maximal paths $\Gamma_1$ and $\Gamma_2$ starting at $v_1$ and $v_2$ respectively to the point $(n,n)$ for $n\gg k$. Our object of study is the point of coalescence, i.e., the point $v\in \Gamma_1\cap \Gamma_2$ with smallest $|v|_1$. We establish that the distance to coalescence $|v|_1$ scales as $k$, by showing the upper tail bound $\P(|v|_1> Rk) \leq R^{-c}$ for some $c>0$. 

We also consider the problem of coalescence for semi-infinite geodesics. For the  almost surely unique semi-infinite geodesics in the direction $(1,1)$ starting from $v_3=(-\lfloor k^{2/3} \rfloor , \lfloor k^{2/3}\rfloor)$ and $v_4=(\lfloor k^{2/3} \rfloor ,- \lfloor k^{2/3}\rfloor)$, we establish the optimal tail estimate $\P(|v|_1> Rk) \asymp R^{-2/3}$, for the point of coalescence $v$. This answers a question left open by Pimentel \cite{Pim16} who proved the corresponding lower bound. 
%
%
%
%
\end{abstract}

\tableofcontents

\section{Introduction}
In their seminal paper in 1986, Kardar, Parisi, and Zhang \cite{KPZ86} predicted universal scaling behaviour for a large number of planar random growth processes, including first passage percolation and corner growth processes. KPZ scaling predicts that these models have length fluctuation exponent of $1/3$ and transversal fluctuation exponent $2/3$, although rigorous progress has been made only in a handful of cases. The first breakthrough was made by Baik, Deift and Johansson \cite{BDJ99} when they established $n^{1/3}$ fluctuation on the length of the longest increasing path from $(0,0)$ to $(n,n)$ in a homogeneous Poissonian field on $\R^2$ where they also established the GUE Tracy-Widom scaling limit. Transversal fluctuation exponent of $2/3$ in this model was proved by Johansson \cite{J00} who also proved  the $n^{1/3}$ fluctuation and Tracy-Widom scaling limit in directed last passage percolation on $\Z^2$ with i.i.d.\ passage times distributed according to either Geometric or  Exponential distribution~\cite{Jo99}. These are the exactly solvable models, for which many exact distributional formulae are available, typically using some deep machinery from algebraic combinatorics or random matrix theory, and certain duality properties from queueing theory in some cases. Over the last twenty years there has been tremendous progress in achieving a detailed understanding in these and a handful of other exactly solvable models, and a rich limiting theory has emerged; see \cite{Corwin11} for an excellent survey of this line of works. Understanding of the pre-limiting models, however, has remained mostly restricted to the exactly solvable cases.

In another related, but separate direction of works, a lot of progress has been made in studying planar first passage percolation, another model believed to be in the KPZ universality class. In absence of exact formulae, the study of first passage percolation has relied mostly on a geometric understanding of the geodesics. Although much less is rigorously known, the connection between understanding properties of infinite geodesics, limit shapes and the KPZ predicted fluctuation exponents has been clear for some years. Coalescence of geodesics has been an interesting tool to study the geometry of first passage percolation model, the study of which was initiated by Newman and co-authors as summarised in his ICM paper~\cite{New95} which proved certain coalescence results under curvature assumptions on the limit shape. Much progress has been made in recent years in understanding the geodesics starting with the breakthrough idea of Hoffman~\cite{H08} of studying infinite geodesics using Busemann functions. These techniques have turned out to be extremely useful, providing a great deal of geometric information on the structure of geodesics in first passage percolation~\cite{DH14, AH16, DH17}.

{In recent years there has been a great deal of interest in studying the coalescence of polymers (maximal paths which we shall also refer to as geodesics) in last passage percolation models as well \cite{FP05,C11, Pim16}. Much can be established in certain exactly solvable settings including the existence and uniqueness of semi-infinite geodesics starting at a given point along a given direction, and coalescence of geodesics along deterministic directions. Some of these results have recently been proved beyond exactly solvable models as well \cite{GRS15,GRS15+}. In this paper, we shall restrict ourselves to the exactly solvable setting of Exponential directed last passage percolation on $\Z^2$, and establish the precise order of the distance to coalescence for two semi-infinite geodesics along the same direction started at distinct points (see Theorem \ref{t:coalopt}) with the optimal tail estimate answering an open question from \cite{Pim16} who proved the corresponding lower bound. We, however, are also interested in the finite variants of the question, where we consider distance to coalescence of geodesics from two distinct points to a far away point. This variant is more important for some applications. We prove a similar scaling in this finite setting also (see Theorem \ref{t:coal}), albeit with a worse tail estimate. Our arguments combine moderate deviations from the exactly solvable literature with tools from percolation to understand geometry of a geodesic together with the environment around it. By way of the proof of this main result we also obtain a local transversal fluctuation result for the geodesics in last passage percolation (see Theorem \ref{t:carsestimate}) that is of independent interest. We now move towards precise model definition and the statement of the main results.}

\subsection{Model Definition and main results}
\label{s:lpp}

Consider the following last passage percolation (LPP) model on $\Z^2$. For each vertex $v\in \Z^2$ associate i.i.d.\ weight $\xi_{v}$ distributed as $\mbox{Exp}(1)$. Define $u\preceq v$ if $u$ is co-ordinate wise smaller than $v$ in $\Z^2$.
For any oriented path $\gamma$ from $u$ to $v$ let the passage time of $\gamma$ be defined by
$$\ell(\gamma):=\sum_{v'\in \gamma\setminus \{v\}} \xi_{v'}.$$
For $u\preceq v$ define the last passage time from $u$ to $v$, denoted $T_{u,v}$ by $T_{u,v}:=\max_{\gamma} \ell (\gamma)$ where the maximum is taken over all up/right oriented paths from $u$ to $v$. Let $\Gamma_{u,v}$ denote the (almost surely unique) path between $u$ and $v$ that attains the last passage time $T_{u,v}$. We shall call the path $\Gamma_{u,v}$ the geodesic between $u$ and $v$. \footnote{Observe that this is a little different from the usual definition of last passage percolation as we exclude the final vertex while adding weights. This is done for convenience as our definition allows $\ell(\gamma)=\ell(\gamma_1)+\ell(\gamma_2)$ where $\gamma$ is the concatenation of $\gamma_1$ and $\gamma_2$. As the difference between the two definitions is minor while considering last passage times between far away points, and the geodesics are same, all our results will be valid for both our and the usual definition of LPP.}

\subsubsection{Coalescence of finite geodesics}
{We now proceed towards statements of our main results. We first state our result in the finite setting. Let $\mathbf{n}$ denote the point $(n,n)$. Let $k>0$ be arbitrary and let $v_1=(0,0)$ and $v_2=(0,k^{2/3})$ (assume without loss of generality that $k^{2/3}$ is an integer, the same result holds with $\lfloor k^{2/3} \rfloor$ otherwise). Let $v_{*}=(v_{*,1}, v_{*,2})$ be a leftmost common point between $\Gamma_{v_1, \mathbf{n}}$ and $\Gamma_{v_2, \mathbf{n}}$ (observe that $v_{*,1}$ is well-defined even if there is no unique leftmost common vertex). Our main result in this paper shows that $v_{*,1}$ is of order $k$ when $n\gg k$.}

\begin{maintheorem}
\label{t:coal}
There exist positive constants $R_0,C,c>0$ such that for all $R>R_0$ and for all $k>0$ the coalescence location satisfies
$$\limsup_{n\to \infty} \P(v_{*,1}> Rk)\leq CR^{-c}.$$
\end{maintheorem}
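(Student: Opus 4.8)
The plan is to bound, for each fixed $n$, the probability that the two geodesics $\Gamma_{v_1,\mathbf n}$ and $\Gamma_{v_2,\mathbf n}$ have not yet met by the time they cross the anti-diagonal line $L_m = \{(x,y): x+y = m\}$ for $m \asymp Rk$, and then let $n \to \infty$. The key geometric observation is the standard ordering/crossing property for exponential LPP: since $v_1 \preceq v_2$ in the appropriate sense (they lie on a common anti-diagonal-ish line, with $v_2$ above $v_1$), the geodesics $\Gamma_{v_1,\mathbf n}$ and $\Gamma_{v_2,\mathbf n}$ can be taken to be ordered between their first and last crossings, and \emph{once they touch they must agree thereafter} (by uniqueness of point-to-point geodesics). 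Hence non-coalescence before $L_m$ is exactly the event that on the line $L_m$ the geodesic $\Gamma_{v_2,\mathbf n}$ lies strictly above $\Gamma_{v_1,\mathbf n}$, i.e. they occupy two distinct points of $L_m$ with $\Gamma_1$'s point weakly below. So it suffices to show $\P(\Gamma_{v_1,\mathbf n} \text{ and } \Gamma_{v_2,\mathbf n} \text{ are disjoint on } L_m) \le C(m/k)^{-c}$ uniformly in $n$.

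To control this, I would run a chaining/barrier argument over the dyadic scales between $k$ and $Rk$. At scale $2^j$ (with $k \lesssim 2^j \lesssim Rk$), consider the anti-diagonal segment where both geodesics live; by the transversal fluctuation estimate (Theorem~\ref{t:carsestimate}), with high probability each geodesic, restricted to the strip of anti-diagonal coordinate in $[2^{j-1}, 2^j]$, has transversal wandering at most $C 2^{2j/3}$ around the diagonal, so both points on $L_{2^j}$ lie in a window of width $O(2^{2j/3})$ around the point $(2^{j-1},2^{j-1})$. Within such a window, I want to say that there is a uniformly positive probability — bounded below independent of $j$ — that the two geodesics coalesce by scale $2^{j+1}$, \emph{conditionally} on not having coalesced by $2^j$ and on the localization event. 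This is the heart of the matter: one needs a one-step coalescence estimate of the form ``two ordered geodesics that are within $O(\ell^{2/3})$ of each other at anti-diagonal distance $\ell$ will have merged by anti-diagonal distance $2\ell$ with probability at least $\delta > 0$''. Such an estimate follows from comparing the two geodesics to the geodesics emanating from the relevant intermediate points, using the fact that last passage increments $T_{u,\cdot}$ across a strip of length $\ell$ and width $\ell^{2/3}$ have fluctuations of order $\ell^{1/3}$ (moderate deviation bounds from the exactly solvable literature), combined with a planar-percolation-style resampling/favorable-environment argument that forces the upper geodesic down onto the lower one. Multiplying the (at least $(1-\delta)$) conditional failure probabilities over the $\asymp \log_2 R$ scales from $k$ to $Rk$ gives a bound of the form $(1-\delta)^{\log_2 R} = R^{-c}$ with $c = -\log_2(1-\delta) > 0$, plus an additive $\sum_j \P(\text{localization fails at scale } 2^j)$ which is even smaller by Theorem~\ref{t:carsestimate}. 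Since all these estimates are uniform in $n$ (they only involve the geometry of the geodesics up to anti-diagonal coordinate $O(Rk) \ll n$), taking $\limsup_{n\to\infty}$ is harmless.

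I expect the main obstacle to be the one-step coalescence estimate at a single scale, i.e. establishing the uniform lower bound $\delta$ on the conditional coalescence probability, because this is where exact solvability and percolation geometry have to be combined carefully. One has to condition on the non-coalescence event, which is a complicated event depending on the environment between $v_1,v_2$ and $L_{2^j}$; controlling the conditional law of the environment \emph{beyond} $L_{2^j}$ so that the moderate-deviation estimates still apply requires care (one wants to condition only on the restriction of the field ``below'' $L_{2^j}$ and on the two entry points, leaving the field above untouched). A clean way to arrange this is to reveal the environment only up to the line where the two geodesics currently sit, expose the two (random) locations of $\Gamma_1$ and $\Gamma_2$ on that line, and then work in the fresh i.i.d.\ environment above it, where a favorable configuration of weights — high weights along a short path joining the two geodesic locations, or more robustly an appropriate inequality between last-passage times from the two points to a common intermediate point — forces a merge, and this favorable configuration has probability bounded below. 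The other, more technical, obstacle is making the dyadic bookkeeping honest: the geodesics' positions on $L_{2^j}$ are themselves random, the localization windows at successive scales have to be nested correctly, and one must ensure the ``bad'' events (localization failure, or the moderate-deviation estimate failing) are summable with room to spare so they do not overwhelm the $R^{-c}$ main term. None of this should be conceptually hard given Theorems~\ref{t:carsestimate} and the available one-point moderate deviation bounds, but the coupling/conditioning step needs to be stated and executed with precision.
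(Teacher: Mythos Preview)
Your overall strategy---multi-scale argument with a one-step coalescence estimate at each scale, controlled by transversal fluctuation bounds---is the same as the paper's. But two steps in your execution do not go through as written, and the paper handles each of them with an additional ingredient.

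First, your claim that the additive term $\sum_j \P(\text{localization fails at scale } 2^j)$ is ``even smaller'' than $R^{-c}$ is false. With a fixed window multiplier $s$, each summand is at most $e^{-cs^2}$ by Theorem~\ref{t:carsestimate}, but the sum over $\asymp \log R$ scales is $\asymp (\log R)\,e^{-cs^2}$, which does not decay in $R$; letting $s$ grow with $j$ makes the sum bounded but still not decaying. This is exactly why the paper proves the refinement Theorem~\ref{t:carsbetter}, showing that the localization-failure events at geometrically separated scales are approximately independent, so that the \emph{number} of bad scales among the first $\ell$ is below $2e^{-cs}\ell$ with probability at least $1-e^{-c'\ell}$. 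That exponential-in-$\ell$ control (with $\ell\asymp\log R$) is what produces the $R^{-c}$.

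Second, your proposed conditioning---reveal the environment below $L_{2^j}$ together with the two geodesic locations on $L_{2^j}$, then treat the environment above as fresh---does not work: where $\Gamma_{v_1,\mathbf n}$ and $\Gamma_{v_2,\mathbf n}$ hit $L_{2^j}$ depends on the environment on \emph{both} sides of $L_{2^j}$, so those locations are not measurable with respect to the field below. The paper avoids this by introducing deterministic extreme points $a_i=(M^ik,M^ik+s_0(M^ik)^{2/3})$, $b_i=(M^ik,M^ik-s_0(M^ik)^{2/3})$ and defining $D_i$ to be the event that the geodesic from $a_i$ to $a_{i+1}$ meets the one from $b_i$ to $b_{i+1}$. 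The events $D_i$ depend only on the environment in disjoint strips and are therefore genuinely independent; polymer ordering (Lemma~\ref{l:porder}) then transfers coalescence of these extreme geodesics to the actual ones whenever the localization events $C_i$ hold. The one-step bound $\P(D_i)\geq 1/2$ is Theorem~\ref{t:finite1}, and its proof (via the barrier construction in Proposition~\ref{l:meetsubinter}) is the bulk of the work---your sketch of the one-step estimate via ``favorable configuration forcing a merge'' is in the right spirit but substantially understates what is required.
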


It is natural to predict this scaling from the KPZ transversal fluctuation exponent of $2/3$ which says that the geodesic between two points at distance $n$ fluctuates at scale $n^{2/3}$ away from the straight line joining the two points. However, to prove Theorem~\ref{t:coal}, we need finer local control on the transversal fluctuation of the geodesic (see Theorem \ref{t:carsestimate}) below.

This coalescence result is robust in the following ways. We do not need to consider geodesics to have the same endpoint, merely that their distance is at the correct scale of transversal fluctuation. Thus Theorem \ref{t:coal} will be valid for the leftmost common point of $\Gamma_{v_1,\mathbf{n}}$ and $\Gamma_{v_2, (n,n+n^{2/3})}$ as well. Furthermore, the choice of direction is arbitrary. The same result holds for geodesics to $(n,hn)$ for any fixed $h\in(0,\infty)$. See Corollary \ref{c:pathmeetslope} for a precise statement. Finally, it would also be clear from the proof that we need not have taken $v_1$ and $v_2$ on a vertical line; the same proof would have worked on two points at distance $k^{2/3}$ on the line $x+y=0$, say. As a matter of fact, the same proof will show that a similar tail estimate works for $v_{*,2}$ and hence for $|v_{*}|_1$ as well, as claimed before. 

We also mention that we work with the Exponential last passage percolation merely for concreteness. Our proof depends only on the Tracy-Widom limit and one point upper and lower tail moderate deviation estimates for the last passage times (see Theorem \ref{t:Jo99} and Theorem \ref{t:moddevdiscrete}) and should work equally well for other exactly solvable models where such estimates are available. Indeed such estimates are available for Poissonian directed last passage percolation in continuum \cite{LM01, LMS02} and last passage percolation on $\Z^2$ with geometric passage times \cite{BXX01, CLW15}, and variants of our results should apply to those models as well.


\subsubsection{Semi-infinite geodesics}
\label{s:bg}
As already mentioned for the case of semi-infinite geodesics, one can obtain a more precise asymptotic result. Before a statement of the result in a proper context, we need to introduce a few definitions and develop some background. The study of semi-infinite geodesics in last passage percolation with Exponential passage times was initiated in \cite{FP05,C11} where the following general picture was established. Starting from any $x\in \Z^2$ there exists an almost surely unique semi-infinite path $\Gamma_{x}=\{x=x_0,x_1,x_2,\ldots\}$ such that for each $i<j$ the section of $\Gamma_x$ between $x_i$ and $x_j$ is the geodesic between $x_i=(x_{i,1},x_{i,2})$ and $x_j$, and such that $\lim_{n\to \infty}\frac{x_{n,1}}{x_{n,2}}=1$. Such a path is called the semi-infinite geodesic starting at $x$ in direction $(1,1)$.  Moreover, any sequence of finite geodesics from $x$ to points $y_n$ in the asymptotic direction $(1,1)$ converges to $\Gamma_x$ almost surely. Finally, this collection of semi-infinite geodesics $\{\Gamma_{x}\}_{x\in \Z^2}$ almost surely coalesce, i.e., for any $x, x'\in \Z^2$, the number of vertices in $\Gamma_{x} \Delta \Gamma_{x'}$ is finite ($\Delta$ denotes the symmetric difference between the two sets of vertices). The same result holds for any positive quadrant direction bounded away from the coordinate axial directions.

This set of results closely parallels the results of Newman and co-authors in early 90s as summarized in \cite{New95} in the context of first passage percolation under certain assumptions on curvature of the limit shape. In recent years the coalescence structure of semi-infinite geodesics has become a central object of study and a lot of progress has been made using Busemann functions \cite{H08, AH16,DH17, DH14}. However, with the help of integrable structure much finer results can be established in the LPP setting with exponential weights.

Distance to coalescence for semi-infinite geodesics along the same direction is a natural object of study in the integrable setting and was considered in \cite{Pim16}, and a similar scaling was predicted. Using Burke's duality, and Busemann functions \cite{Pim16} established, among other things, a lower bound to this effect, and the upper bound remained open. For technical convenience, let us change the setting of Theorem \ref{t:coal} slightly. 

Fix $k\in \N$. Consider the straight line $\mathbb{L}=\{(x,y)\in \Z_2: x+y=0\}$. on $\mathbb{L}$ (assume, as before, without loss of generality that $k^{2/3}$ is an integer). For $v$ on the line $\mathbb{L}$, let $\Gamma_{v}$ denote the almost surely unique semi-infinite geodesic starting at $v$ in the direction $(1,1)$. Recall that the collection $\{\Gamma_{v}\}_{v\in \mathbb{L}}$ is coalescing, i.e., for  any $v,v'\in \mathbb{L}$, almost surely $\Gamma_{v}$ and $\Gamma_{v'}$ coalesce. Let $c(v,v')=(x(v,v'),y(v,v'))$ denote the point at which $\Gamma_{v}$ and $\Gamma_{v'}$ coalesce. Let $d(v,v')=x(v,v')+y(v,v')$ denote the distance to coalescence. Now consider $v_3=(-k^{2/3}, k^{2/3})$ and $v_4=(k^{2/3},-k^{2/3})$ (assume without loss  of generality that $k^{2/3}\in \N$). Translated to this setting, \cite{Pim16} proved that $\lim_{R\to 0} \P(d(v_3,v_4)>Rk) \to 1$ as $R\to 0$ uniformly in $k$, further it was conjectured that, this is the correct scaling, i.e., $\lim_{R\to 0} \P(d(v_3,v_4)>Rk) \to 0$ as $R\to \infty$. Moreover, using some calculations in the limiting Airy process, \cite{Pim16} conjectured that $\P(d(v_3,v_4)>Rk)\asymp R^{-2/3}$ as $R\to \infty$ uniformly in $k$. Our second main result settles this conjecture. 

\begin{maintheorem}
\label{t:coalopt}
In the above set-up, there exists $C_1,C_2, R_0>0$ such that for all $k>0$ and $R>R_0$, we have 
$$C_1R^{-2/3} \leq \P(d(v_3,v_4)>Rk)\leq C_2R^{-2/3}.$$
\end{maintheorem}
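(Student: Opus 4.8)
\emph{Proof strategy.} The plan is to reduce the statement about $\Gamma_{v_3}$ and $\Gamma_{v_4}$ to a sharp tail estimate for the coalescence distance of two \emph{adjacent} semi-infinite geodesics, and then to establish that estimate using Busemann functions together with the one point upper and lower tail moderate deviation estimates and the Tracy--Widom limit. For $t\ge 0$ write $\mathbb{L}_t:=\{(x,y)\in\Z^2:x+y=t\}$ (so $\mathbb{L}=\mathbb{L}_0$) and, for $m\in\Z$, $u_m:=(m,-m)\in\mathbb{L}$, so $v_3=u_{-\lfloor k^{2/3}\rfloor}$ and $v_4=u_{\lfloor k^{2/3}\rfloor}$. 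Each up/right path starting on $\mathbb{L}$ meets $\mathbb{L}_t$ in exactly one vertex; let $q_v(t)$ be that vertex of $\Gamma_v$. By almost sure uniqueness of the semi-infinite geodesic from a vertex, $\Gamma_v$ and $\Gamma_{v'}$ coalesce at level $\le t$ iff $q_v(t)=q_{v'}(t)$, so $\{d(v,v')>t\}=\{q_v(t)\ne q_{v'}(t)\}$; and since geodesics from ordered vertices of $\mathbb{L}$ stay ordered until they merge, the vertices $q_{u_{-\lfloor k^{2/3}\rfloor}}(t),\dots,q_{u_{\lfloor k^{2/3}\rfloor}}(t)$ are weakly monotone along $\mathbb{L}_t$. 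Writing $\mathcal{E}_m(t):=\{d(u_m,u_{m+1})>t\}$ this gives the exact identity
\[
\{d(v_3,v_4)>t\}=\bigcup_{m=-\lfloor k^{2/3}\rfloor}^{\lfloor k^{2/3}\rfloor-1}\mathcal{E}_m(t),
\]
a union of $2\lfloor k^{2/3}\rfloor$ events, each of probability $p(t):=\P(\mathcal{E}_0(t))$ since $(u_m,u_{m+1})$ is a lattice translate of $(u_0,u_1)$ and translations preserve both the field and the direction $(1,1)$. A union bound therefore reduces the upper bound of the theorem to the single edge estimate $p(t)\le Ct^{-2/3}$, because then $\P(d(v_3,v_4)>Rk)\le 2\lfloor k^{2/3}\rfloor\,C(Rk)^{-2/3}\le C_2R^{-2/3}$.

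So the crux is $p(t)=\P(d(u_0,u_1)>t)\asymp t^{-2/3}$. Here I would work with the Busemann function $\mathcal{B}$ in direction $(1,1)$ (existence and coalescence: \cite{FP05,C11}), using two standard facts: along $\mathbb{L}_t$ the increments of $\mathcal{B}$ are i.i.d.\ (each a difference of two independent $\mathrm{Exp}(1/2)$'s), and with this paper's convention (the endpoint weight excluded from $T$) these increments depend only on $\{\xi_v:v\cdot(1,1)\ge t\}$, hence are independent of $\{T_{w,q}:w\in\mathbb{L}_0,\ q\in\mathbb{L}_t\}$; moreover $q_w(t)=\arg\max_{q\in\mathbb{L}_t}\bigl(T_{w,q}-\mathcal{B}(q)\bigr)$. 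The key elementary identity (via $\max(a,b)=a+(b-a)^{+}$ and $u_0+e_1=u_1+e_2=(1,0)$) is
\[
T_{u_0,q}-T_{u_1,q}=(\xi_{u_0}-\xi_{u_1})+(T_{(0,1),q}-T_{(1,0),q})^{+}-(T_{(2,-1),q}-T_{(1,0),q})^{+},
\]
which, along $\mathbb{L}_t$, is non-increasing and is \emph{exactly constant} on the stretch $\mathcal{M}_t$ where the cell $(1,0)$ wins the three way competition among $(0,1),(1,0),(2,-1)$; the endpoints of $\mathcal{M}_t$ are the level-$t$ positions of the competition interfaces issuing from $u_0$ and $u_1$, and the two ``gains'' available on crossing $\partial\mathcal{M}_t$ are bounded (they converge to Busemann increments). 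Thus $q_{u_0}(t)$ and $q_{u_1}(t)$ maximise two profiles that differ by a constant on $\mathcal{M}_t$, and $\mathcal{E}_0(t)$ forces a certain exit point --- the displacement of $q_{u_0}(t)$ (or $q_{u_1}(t)$) from the relevant endpoint of $\mathcal{M}_t$ --- to be $O(1)$; this is the reformulation behind Pimentel's duality \cite{Pim16}, which identifies $d(u_0,u_1)$ in law with an exit point of a stationary exponential LPP. Both $\partial\mathcal{M}_t$ and $q_{u_0}(t)$ fluctuate on the transversal scale $t^{2/3}$ --- this uses the curvature of the limit shape together with the one point upper and lower tail moderate deviation bounds and the Tracy--Widom limit for passage times (Theorems \ref{t:Jo99} and \ref{t:moddevdiscrete}), much as in Theorem \ref{t:carsestimate} --- and the decisive remaining input is a uniform-in-$t$ anticoncentration estimate: this exit point has a density bounded above, and bounded below away from its endpoint, by constant multiples of $t^{-2/3}$. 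That yields $\P(\text{exit point}\le r)\asymp r\,t^{-2/3}$ for $1\le r\le t^{2/3}$ and, at $r=O(1)$, gives $p(t)\asymp t^{-2/3}$.

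With $p(Rk)\le C(Rk)^{-2/3}$ in hand the upper bound of the theorem follows. For the matching lower bound one may quote \cite{Pim16}, or argue directly: apply the Paley--Zygmund inequality to $S:=\#\{-\lfloor k^{2/3}\rfloor\le m<\lfloor k^{2/3}\rfloor:\mathcal{E}_m(Rk)\text{ holds}\}$, for which $\E S=2\lfloor k^{2/3}\rfloor\,p(Rk)\asymp R^{-2/3}$, and combine it with a pair correlation bound $\P(\mathcal{E}_m(t)\cap\mathcal{E}_{m'}(t))\le Ct^{-4/3}$ for all $m\ne m'$ (two distinct level-$t$ splits being a product of two scale-$t^{-2/3}$ exit point events, proved by the same Busemann analysis) to get
\[
\E[S^{2}]=\E S+\sum_{m\ne m'}\P\bigl(\mathcal{E}_m(Rk)\cap\mathcal{E}_{m'}(Rk)\bigr)\le\E S+C\bigl(2\lfloor k^{2/3}\rfloor\bigr)^{2}(Rk)^{-4/3}=(1+O(R^{-2/3}))\,\E S,
\]
whence $\P(S\ge1)\ge(\E S)^{2}/\E[S^{2}]\ge\tfrac{1}{2}\,\E S\asymp R^{-2/3}$ for $R\ge R_0$, uniformly in $k$. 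The hard part is the anticoncentration step in the preceding paragraph: the one point moderate deviation estimates by themselves only give $p(t)\le t^{-c}$ for a small $c>0$, and upgrading the exponent to $2/3$ --- the whole point of the theorem --- requires Airy/Tracy--Widom-level control, uniform in $t$, on where a geodesic crosses a transversal line relative to the neighbouring competition interface; the pair correlation bound needed for the lower bound is a secondary, technically similar, point.
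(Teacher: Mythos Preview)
Your reduction to the single-edge probability $p(t)=\P(d(u_0,u_1)>t)$ via the union $\{d(v_3,v_4)>t\}=\bigcup_m\mathcal{E}_m(t)$ and translation invariance is exactly the paper's first move (Proposition~\ref{p:upperlower} and the short argument after it). The divergence is in how you obtain $p(t)\le Ct^{-2/3}$, and this is where your proposal has a genuine gap.

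You correctly recast $\mathcal{E}_0(t)$ via the Busemann/competition-interface picture and arrive at an exit-point formulation --- essentially Pimentel's duality from \cite{Pim16}. But you then assert, without proof, the ``decisive remaining input'': a uniform-in-$t$ upper bound of order $t^{-2/3}$ on the density of that exit point near the competition interface. You yourself flag this as ``the hard part''; it is. The paper explicitly says (just after the statement of Theorem~\ref{t:coalopt}) that Pimentel's approach gives the \emph{lower} bound but that obtaining the matching upper bound along those lines was left open in \cite{Pim16}, and refers to \cite[Section~3]{Pim16} for a discussion of the obstruction. Your proposal does not supply the missing anticoncentration/density estimate, nor a route to it from the inputs you list (Theorems~\ref{t:Jo99} and~\ref{t:moddevdiscrete} and Theorem~\ref{t:carsestimate}); those inputs, as you note, only yield $p(t)\le t^{-c}$ with some small $c$. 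So the core of the upper bound is not established.

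The paper sidesteps this entirely. Instead of an anticoncentration estimate for a single edge, it bounds the \emph{tail} of the number of boundary points $N_k:=\sum_{|i|\le k^{2/3}}X_i^{(k)}$ (Lemma~\ref{l:sum}). The observation is that if $N_k\ge\ell$, then either some semi-infinite geodesic has atypical transversal fluctuation at level $k$ (handled by Proposition~\ref{p:carinfinite}), or there are $\ell$ pairwise disjoint finite geodesics crossing an on-scale $k\times k^{2/3}$ parallelogram; the latter is exponentially rare in $\ell^{1/4}$ by the external input Proposition~\ref{p:rare} (from \cite{BHS18}, via a BK-type argument that a geodesic squeezed into a thin sub-cylinder is uncompetitive). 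From the resulting stretched-exponential tail for $N_k$ and the trivial bound $N_k\le 2k^{2/3}$ one gets $\E N_k\le C$, hence $p(k)\le C k^{-2/3}$ by stationarity. This is a genuinely different mechanism than the Busemann/exit-point route, and it is what makes the upper bound go through.

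For the lower bound: the paper's argument is much simpler than your Paley--Zygmund scheme. One uses Proposition~\ref{p:carinfinite} to find $M$ with $\P(d(w_1,w_2)>k)\ge 1/3$ for $w_1,w_2$ at antidiagonal distance $2Mk^{2/3}$, and then translation invariance gives $MR^{2/3}\,\P(d(v_3,v_4)>Rk)\ge \P(d(w_1',w_2')>Rk)\ge 1/3$ with $w_i'$ at distance $2M(Rk)^{2/3}$. (The paper remarks that a second-moment argument also works, but your version of it rests on the unproved pair-correlation bound $\P(\mathcal{E}_m\cap\mathcal{E}_{m'})\le Ct^{-4/3}$, which you derive ``by the same Busemann analysis'' --- i.e., from the same missing anticoncentration input.)
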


As a matter of fact a lower bound to this effect was already proved in \cite{Pim16} (see \cite[Section 3]{Pim16} for a discussion of the difficulty of the approach therein to get a matching upper bound), hence the main work goes in proving the upper bound. However we shall also provide a short proof for the lower bound for completeness. 

{Note that we could obtain only a polynomial decay of an unspecified small exponent in Theorem \ref{t:coal}(with much more work). This is because in the infinite setting, we can appeal to certain ergodic theorems which end up giving tight results, but seem hard to adapt to the finite setting.}

Observe also that we took the points $v_3$ and $v_4$ on the anti-diagonal line $x+y=0$ rather than on a vertical line as in the set-up of Theorem \ref{t:coal}. It is merely for technical convenience, the same result will still hold for points on the vertical line, as well as for semi-infinite geodesics in other directions (except the axial directions) with minor changes to the proof.

We finish this subsection with a further discussion about the importance of studying the coalescence of finite geodesics in models of KPZ universality class. The coalescence structure of geodesics in exactly solvable polymer models is important in understanding scaling limits of the random geometric structures. See \cite{Pim16} for connections to this question to a conjectural object the Airy Sheet. As mentioned above, in Brownian last passage percolation, where one has a strong resampling property called Brownian Gibbs property \cite{CH14}, a much more detailed structure of coalescent polymer trees has been explored and used to make progress towards the important question of Brownian regularity of the Airy processes \cite{H16, H17a, H17b, H17c}. Using techniques of \cite{Pim16}, local Brownian regularity has also been explored in \cite{Pim17}. 

Finally we advocate a further reason for studying the geometry of geodesics in the context of exactly solvable polymer models. A detailed understanding of the geometry of geodesics, beyond what can be obtained from the integrable techniques have recently proved useful in study of certain models that arise from adding local defects to the integrable models. Even though the local defects destroy the integrable structure, the more geometric understanding of the geodesics are still useful. One example of this principle was obtained in \cite{BSS14}, where TASEP with a slow bond at the origin was studied using the correspondence to Exponential last passage percolation, and geometric understanding of the geodesic was used to show that a slow bond at the origin of arbitrarily small strength changes the current, thus settling the ``slow bond problem". As a matter of fact, we use the results in this paper to study the invariant measures of TASEP with a slow bond \cite{BSS17+}, and establish a conjecture of Liggett from \cite{Lig99}. We remark that for these applications, it is crucial to have the coalescence statement in the finite setting. Hence even though we do not get the optimal tail decay in Theorem \ref{t:coal}, the result turns out to be important and useful.

\subsection{Inputs from Integrable Probability; Tracy-Widom limit, and $n^{2/3}$ Fluctuations}
\label{s:int}

In this subsection, we recall the basic inputs from the integrable probability literature that we shall be using throughout. As mentioned before Exponential DLPP is one of the handful of models for which the KPZ scaling result and much more has been rigorously established. The Tracy-Widom scaling limit for exponential DLPP is due to Johansson \cite{Jo99}.

\begin{theorem}[\cite{Jo99}]
\label{t:Jo99}
Let $h>0$ be fixed. Let $v=(0,0)$ and $v_{n}=(n,\lfloor hn \rfloor)$. Then
\begin{equation}
\label{e:twlimitdiscrete}
\dfrac{T_{v,v_n}-(1+\sqrt{h})^2n}{h^{-1/6}(1+\sqrt{h})^{4/3}n^{1/3}} \stackrel{d}{\rightarrow} F_{TW}.
\end{equation}
where the convergence is in distribution and $F_{TW}$ denotes the GUE Tracy-Widom distribution.
\end{theorem}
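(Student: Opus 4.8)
\textbf{Proof proposal for Theorem~\ref{t:Jo99}.} The statement is Johansson's celebrated theorem, so the plan is to recall the proof strategy rather than attempt anything new. The first step is to reduce the study of $T_{v,v_n}$ with i.i.d.\ $\mathrm{Exp}(1)$ vertex weights to a determinantal problem. Via the RSK correspondence (equivalently, the combinatorics of non-intersecting lattice paths / the Lindstr\"om--Gessel--Viennot lemma), the last passage time from $(0,0)$ to $(n,\lfloor hn\rfloor)$ equals in distribution the largest eigenvalue of a certain random matrix from the Laguerre unitary ensemble; concretely, if $M$ is an $(\lfloor hn\rfloor+1)\times(n+1)$ matrix with i.i.d.\ standard complex Gaussian entries, then $T_{v,v_n}\overset{d}{=}\lambda_{\max}(M^*M)$. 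This identity is the place where exact solvability is used and is the conceptual heart of the argument.

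The second step is to write down the exact distribution of $\lambda_{\max}$. The eigenvalues of $M^*M$ form a determinantal point process with the Laguerre (correlation) kernel built from Laguerre polynomials, so $\P(T_{v,v_n}\le t)$ is a Fredholm determinant $\det(I-K_n)$ of that kernel restricted to $(t,\infty)$. The third step is the asymptotic analysis: one centers and scales by $t = (1+\sqrt h)^2 n + h^{-1/6}(1+\sqrt h)^{4/3} n^{1/3} s$, the scaling dictated by the edge of the limiting Marchenko--Pastur density (the constants $(1+\sqrt h)^2$ and $h^{-1/6}(1+\sqrt h)^{4/3}$ are exactly the location and the edge-fluctuation scale of the right spectral edge). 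Using the classical Plancherel--Rotach asymptotics for Laguerre polynomials (or a Riemann--Hilbert steepest-descent analysis of them), one shows the rescaled Laguerre kernel converges, uniformly on compacts, to the Airy kernel $K_{\mathrm{Ai}}(x,y)=\int_0^\infty \mathrm{Ai}(x+u)\mathrm{Ai}(y+u)\,du$. Together with trace-class tail bounds that justify passing the limit inside the Fredholm determinant, this yields $\P\big((T_{v,v_n}-(1+\sqrt h)^2 n)/(h^{-1/6}(1+\sqrt h)^{4/3}n^{1/3})\le s\big)\to \det(I-K_{\mathrm{Ai}})_{L^2(s,\infty)} = F_{TW}(s)$, which is the definition of the GUE Tracy--Widom distribution function.

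The main obstacle is the asymptotic analysis of the Laguerre kernel at the hard-to-soft (edge) transition: controlling the Laguerre polynomials near the spectral edge with enough uniformity, and in particular establishing the trace-norm (not merely pointwise) convergence of the kernels needed to conclude convergence of Fredholm determinants and hence convergence in distribution. A clean way to organize this is the Riemann--Hilbert / nonlinear steepest-descent method, locating the relevant saddle and building the Airy parametrix near the edge; alternatively one invokes the known Plancherel--Rotach estimates directly. Since the excerpt is permitted to cite this as a known theorem, I would in fact simply reference \cite{Jo99} for the full argument and only sketch the above pipeline. For the purposes of this paper the role of Theorem~\ref{t:Jo99} is as a black-box input into the moderate-deviation estimates used later, so no reproof is actually required.
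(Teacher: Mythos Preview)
Your proposal is correct and, importantly, you correctly recognize that no proof is required here: the paper does not prove Theorem~\ref{t:Jo99} at all, but simply cites it as a known result from \cite{Jo99} and uses it as a black-box input. Your sketch of Johansson's argument (RSK/LUE identification, Laguerre kernel Fredholm determinant, edge scaling to the Airy kernel) is accurate as a summary of the original proof, though it goes beyond what the paper itself does.
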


GUE Tracy-Widom distribution is a very important distribution in random matrix theory that arises as the scaling limit of largest eigenvalue of GUE matrices; see e.g.\ \cite{BDJ99} for a precise definition of this distribution. For our purposes moderate deviation inequalities for the centred and scaled variable as in the above theorem will be important. Such inequalities can be deduced from the results in \cite{BFP12}, as explained in \cite{BSS14}. We quote the following result from there.

\begin{theorem}[\cite{BSS14}, Theorem 13.2]
\label{t:moddevdiscrete}
Let $\psi>1$ be fixed. Let $v, v_n$ be as in Theorem \ref{t:Jo99}. Then  there exist constants $N_0=N_0(\psi)$, $t_0=t_0(\psi)$ and $c=c(\psi)$ such that we have for all $n>N_0, t>t_0$ and all $h \in (\frac{1}{\psi}, \psi)$

$$\P[|T_{v,v_{n}}- n(1+\sqrt{h})^{2}|\geq tn^{1/3}]\leq e^{-ct}.$$
\end{theorem}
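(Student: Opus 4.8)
The plan is to follow \cite[Section 13]{BSS14}: one deduces the estimate from the exact determinantal description of $T_{v,v_n}$ together with the parameter--uniform edge asymptotics obtained in \cite{BFP12}. By Johansson's correspondence (the same one underlying Theorem \ref{t:Jo99}), for $v=(0,0)$ and $v_n=(n,\lfloor hn\rfloor)$ the last passage time $T_{v,v_n}$ has the law of the largest particle of a determinantal point process on $\R$ with an explicit (Laguerre) correlation kernel $K_{n,h}$; thus $\P(T_{v,v_n}\le s)=\det\bigl(I-K_{n,h}\bigr)_{L^2(s,\infty)}$, and the expected number of particles in a Borel set $A$ is $\int_A K_{n,h}(x,x)\,dx$. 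Write $\mu=\mu(h)=(1+\sqrt h)^2$ and $\sigma=\sigma(h)=h^{-1/6}(1+\sqrt h)^{4/3}$ for the centring and scaling constants of Theorem \ref{t:Jo99}; the case $h<1$ reduces to $h>1$ by the reflection symmetry of the lattice, so $h$ may be taken bounded above and below. The decisive input from \cite{BFP12} is that its steepest--descent analysis yields, uniformly over $h\in(\tfrac1\psi,\psi)$ and over a window $|s|\le\epsilon_0 n^{2/3}$ of the rescaled coordinate, that the rescaled kernel $\sigma n^{1/3}K_{n,h}\bigl(\mu n+s\sigma n^{1/3},\,\mu n+s'\sigma n^{1/3}\bigr)$ is close, with a uniformly controlled error, to the Airy kernel $K_{\mathrm{Ai}}(s,s')$. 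All constants below depend only on $\psi$.

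\emph{Upper tail.} For $t_0\le t\le\epsilon_0 n^{2/3}$, bound $\P(T_{v,v_n}\ge\mu n+tn^{1/3})$ by the expected number of particles in $[\mu n+tn^{1/3},\infty)$, i.e.\ by $\int_{\mu n+tn^{1/3}}^{\infty}K_{n,h}(x,x)\,dx$. Substituting $x=\mu n+s\sigma n^{1/3}$ and invoking the uniform edge estimate, this is at most $C\int_{t/\sigma}^{\infty}e^{-cs^{3/2}}\,ds$ up to a uniformly negligible error, hence at most $C'e^{-c't}$, using the super--exponential decay $K_{\mathrm{Ai}}(s,s)\le Ce^{-cs^{3/2}}$ of the diagonal Airy kernel and the boundedness of $\sigma$. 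For $t\ge\epsilon_0 n^{2/3}$ one has $\mu n+tn^{1/3}\ge(\mu+\epsilon_0)n$, and the standard upper large--deviation bound $\P(T_{v,v_n}\ge\beta n)\le e^{-c(\beta,\psi)n}$ for fixed $\beta>\mu(h)$ (together with the trivial $\Gamma$--domination bound $T_{v,v_n}\le\sum_{\text{weights}}\xi_v$ when $t$ exceeds any fixed power of $n$) gives $\le e^{-cn}\le e^{-ct}$.

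\emph{Lower tail.} For $t_0\le t\le\epsilon_0 n^{2/3}$, $\P(T_{v,v_n}\le\mu n-tn^{1/3})$ is the gap probability $\det\bigl(I-K_{n,h}\bigr)_{L^2(\mu n-tn^{1/3},\infty)}$; the uniform closeness of $K_{n,h}$ to the Airy kernel (in the relevant trace--class sense) makes it differ by a uniformly small error from $F_{TW}(-t/\sigma)$, and the left--tail bound $F_{TW}(-r)\le e^{-cr^{3}}$ for $r\ge r_0$ then yields $\le e^{-ct}$. For $t\ge\epsilon_0 n^{2/3}$ one has $\mu n-tn^{1/3}\le(\mu-\epsilon_0)n$; decomposing the rectangle from $v$ to $v_n$ into $\asymp n$ disjoint blocks of a fixed size $L=L(\epsilon_0,\psi)$ along the $(1,h)$ direction, superadditivity gives $T_{v,v_n}\ge\sum_i X_i$ with the $X_i$ i.i.d., $\E X_i\ge(\mu-\tfrac{\epsilon_0}{2})L$ by the shape theorem, and each $X_i$ possessing a finite exponential moment near the origin; Cram\'er's inequality then gives $\P(T_{v,v_n}\le(\mu-\epsilon_0)n)\le e^{-cn}\le e^{-ct}$. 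Combining the two ranges in each tail and retaining the worse constants yields the claimed $e^{-ct}$ bound.

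The main obstacle is precisely the \emph{uniformity in $h$}: the edge location $\mu(h)$, the scale $\sigma(h)$, and---above all---the error terms in the edge asymptotics of $K_{n,h}$ must be controlled uniformly over the compact range $h\in(\tfrac1\psi,\psi)$ and over the full moderate--deviation window $|s|\le\epsilon_0 n^{2/3}$, rather than for a single $h$ and $s=O(1)$ as in a bare Tracy--Widom limit theorem. Producing such uniform edge estimates is exactly what the asymptotic analysis of \cite{BFP12} delivers; with that in hand the remainder is bookkeeping---passing between the rectangle normalisation of \cite{BFP12} and the $(n,\lfloor hn\rfloor)$ parametrisation here, tracking the $\psi$--dependence of all constants, and splicing in the soft large--deviation bounds for the two far tails---as carried out in \cite[Section 13]{BSS14}.
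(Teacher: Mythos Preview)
The paper does not actually prove this theorem: it is stated as a quotation from \cite[Theorem 13.2]{BSS14}, with the remark that ``such inequalities can be deduced from the results in \cite{BFP12}, as explained in \cite{BSS14}.'' There is therefore no proof in the present paper to compare against.

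Your sketch is a faithful outline of the argument in \cite[Section 13]{BSS14}: the Laguerre determinantal representation of $T_{v,v_n}$, the uniform edge asymptotics of \cite{BFP12} over the moderate-deviation window and over $h\in(\tfrac{1}{\psi},\psi)$, the first-moment bound for the upper tail via the one-point function, the Fredholm determinant bound for the lower tail, and the splicing with large-deviation estimates outside the $|t|\le \epsilon_0 n^{2/3}$ window. Your emphasis on uniformity in $h$ being the only nontrivial issue is exactly right, and is precisely what \cite{BFP12} supplies. In short, your proposal is correct and matches the cited source; there is nothing further to add since the present paper offers no independent argument.
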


Theorem \ref{t:moddevdiscrete} provides much information about the geometry and regularity of geodesics in the DLPP model; which was exploited crucially in \cite{BSS14}, and will be extensively used by us again. Most fundamental among those is the $n^{2/3}$ transversal fluctuation of the geodesic between points at distance $n$. Let $\gamma$ denote the (almost surely unique) geodesic between two fat away points $u$ and $v$. For simplicity let us assume $u=(0,0)$ and $v=(n,n)$. The transversal fluctuation of $\gamma$, denoted $\mbox{TF}_{n}$ is defined by $\sup_{(x,y)\in \gamma} |x-y|$ and denotes the maximum vertical distance between a point on $\gamma$ to the straight line joining the two points. It follows from Theorem \ref{t:moddevdiscrete} that $\mbox{TF}_{n}$ is an order $n^{2/3}$ random object. The scaling exponent $2/3$ was identified in \cite{J00};
it follows from the arguments of \cite{BSS14},  cf. Theorem 11.1 there (see also Theorem 2.5 in \cite{BCS06} for an argument using Burke's duality) that $\P({\rm TF}_{n}> kn^{2/3})\leq e^{-ck^2}$ uniformly in large $n$ for some $c>0$.


Observe that Theorem 11.1 in \cite{BSS14} provides a global upper bound on transversal fluctuation. However, from points $(x,y)\in \gamma$  with $x=\ell \ll n$, one expects a much smaller transversal fluctuation of order $\ell^{2/3}$. Such a local fluctuation estimate would be useful to us and is also of independent interest. We now move towards a precise statement to this effect.

Let $\Gamma$ be the geodesic from $(0,k')$ to $(n,n+k)$.  For $\ell\in \Z$, let $\Gamma(\ell)\in \Z$ be the maximum number such that $(\ell,\Gamma(\ell))\in \Gamma$ and $\Gamma^{-1}(\ell)\in \Z$ be the maximum number such that $(\Gamma^{-1}(\ell),\ell)\in \Gamma$. The following theorem is our final main result in this paper.


\begin{maintheorem}
\label{t:carsestimate}
Fix $L>0$. Then there exist  positive constants $n_0,\l_0, s_0, c$ depending only on $L$, such that for all $n\geq n_0, s\geq s_0 \vee 2L, \l\geq \l_0$, and $|k'|\leq L\ell^{2/3}, |k|\leq Ln^{2/3}$ we have
\[\P[|\Gamma(\ell)-\ell|\geq s\ell^{2/3}] \leq e^{-cs^2};\]
\[\P[|\Gamma^{-1}(\ell)-\ell |\geq s\ell^{2/3}] \leq e^{-cs^2}.\]
\end{maintheorem}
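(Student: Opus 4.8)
Let me think about how to prove this local transversal fluctuation estimate. The key tool is the moderate deviation bound of Theorem \ref{t:moddevdiscrete}, and the strategy is a standard "geodesic can't make a big detour without losing too much passage time" argument, but now localized to the scale $\ell$.

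The plan is to proceed by a union bound over dyadic scales and a comparison of passage times. Fix $\ell$ and $s$. Suppose the geodesic $\Gamma$ from $(0,k')$ to $(n,n+k)$ satisfies $\Gamma(\ell) - \ell \geq s\ell^{2/3}$ (the other direction and the $\Gamma^{-1}$ statements are symmetric). Then the geodesic must pass through a point $w = (\ell, m)$ with $m \geq \ell + s\ell^{2/3}$, and we can write $T_{(0,k'),(n,n+k)} = T_{(0,k'),w} + T_{w,(n,n+k)}$. The idea is to show that forcing the geodesic through such a far-off point $w$ costs too much. The first segment goes from $(0,k')$ to $w=(\ell,m)$; since $|k'| \leq L\ell^{2/3}$ and $m - \ell \geq s\ell^{2/3}$, this is an LPP problem over displacement roughly $(\ell, \ell + (s - L)\ell^{2/3})$, so the anisotropy parameter $h \approx 1 + (s-L)\ell^{-1/3}$ is close to $1$ when $\ell$ is large relative to $s$. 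The mean passage time $(1+\sqrt{h})^2 \ell$ expands, to second order, as $4\ell + 2(s-L)\ell^{2/3} - \frac{1}{4}(s-L)^2\ell^{1/3} + \cdots$; crucially the $\ell^{1/3}$ correction is \emph{negative} and of order $s^2\ell^{1/3}$. Comparing against a "typical" geodesic that stays within $O(\ell^{2/3})$ of the diagonal on $[0,\ell]$ and then proceeds, the competing path has mean $\approx 4\ell + O(\ell^{2/3})$ with no such deficit. So routing through $w$ loses $\gtrsim s^2 \ell^{1/3}$ in expectation on the first segment, with a matching (or smaller) correction on the second segment since moving $w$ vertically by $s\ell^{2/3}$ only shifts the endpoint displacement of the second leg by a lower-order amount relative to its length $n \gg \ell$.

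The key steps, in order: (i) reduce the event $\{|\Gamma(\ell) - \ell| \geq s\ell^{2/3}\}$ to the statement that there exists $w$ on the vertical line $x = \ell$ at height deviation $\geq s\ell^{2/3}$ through which some up/right path from $(0,k')$ to $(n,n+k)$ of total weight $\geq T_{(0,k'),(n,n+k)}$ passes; (ii) lower-bound $T_{(0,k'),(n,n+k)}$ using Theorem \ref{t:moddevdiscrete} by its mean minus $t n^{1/3}$ with, say, $t$ a constant times $s^2 (\ell/n)^{1/3}$ (a small power), which holds off an event of probability $\leq e^{-cs^2(\ell/n)^{1/3}}$ — this is the weak point and needs care, see below; (iii) union-bound over the $O(\ell)$ choices of $w$ (or better, over dyadic blocks of size $\ell^{2/3}$ using monotonicity of passage times in the endpoint, so only $O(\ell^{1/3})$ or $O(s)$ blocks matter after also capping the fluctuation at the global scale via Theorem 11.1 of \cite{BSS14}), and for each $w$ upper-bound $T_{(0,k'),w} + T_{w,(n,n+k)}$ by the sum of the two means plus $t'\ell^{1/3} + t'' n^{1/3}$ via Theorem \ref{t:moddevdiscrete} again; (iv) do the arithmetic: the deterministic mean-gap of order $s^2\ell^{1/3}$ beats the fluctuation allowances provided $t', t''$ are chosen as small constant multiples of $s^2$ scaled by the appropriate power of $\ell/n$, yielding a per-$w$ failure probability $e^{-cs^2}$, and the union bound over $O(s)$ (or polynomially-in-$\ell$, absorbed into the exponential) blocks still gives $e^{-c's^2}$ after adjusting constants; (v) handle $\Gamma^{-1}$ identically by symmetry (horizontal line $y = \ell$), and note the $|k'| \leq L\ell^{2/3}$, $|k| \leq Ln^{2/3}$ hypotheses guarantee the anisotropy parameters stay in a compact interval bounded away from $0$ and $\infty$ so that Theorem \ref{t:moddevdiscrete} applies with uniform constants.

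The main obstacle I expect is step (ii)–(iii): one must make the fluctuation allowances on the \emph{long} segments (length $n$) small enough — of order $s^2 (\ell/n)^{1/3} n^{1/3} = s^2 \ell^{1/3}$, comparable to the deterministic gain — without the tail $e^{-ct}$ degrading, while simultaneously controlling the union bound over the many candidate points $w$. The clean way around this is to \emph{decouple scales}: rather than comparing full path weights, compare only the restrictions to the strip $0 \leq x \leq \ell$, i.e. bound $T_{(0,k'),w} + \big(T_{(0,k'),(n,n+k)} - T_{(0,k'), \mathbf{x=\ell}\text{-crossing of }\Gamma}\big)$ by noting that conditionally the second-leg contribution is at most $T_{w,(n,n+k)}$, and that on the good event the geodesic's crossing point of $x=\ell$ is within $C\ell^{2/3}$ of the diagonal with probability $1 - e^{-cs^2}$ by \emph{comparing to a reference point} $w_0 = (\ell, \ell)$ and using that $T_{(0,k'),w_0} + T_{w_0,(n,n+k)} \geq T_{(0,k'),(n,n+k)} - (\text{small})$ fails only with the right probability. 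In other words: show the geodesic beats the $w_0$-constrained path, and show no $w$-constrained path with $w$ far beats the $w_0$-constrained path; both reduce to Theorem \ref{t:moddevdiscrete} applied on segments whose lengths are $\ell$ and $n$ separately, with allowances $O(s^2\ell^{1/3})$ on the short legs (easy, tail $e^{-cs^2}$) and $O(\epsilon n^{1/3})$ on the long legs for a small constant $\epsilon$ (easy, tail $e^{-c/\epsilon}$, a constant) — the point being the long-leg mean difference between routing through $w$ versus $w_0$ is only $O(s^2 \ell^{1/3}) \ll \epsilon n^{1/3}$ when $\ell \ll n$, so it is absorbed. Carrying this out cleanly, and in particular organizing the union bound over $w$ via monotonicity so that only $O(\log)$ or $O(s)$ representative points need a bound, is the technical heart of the argument; once set up, the curvature of the mean $h \mapsto (1+\sqrt h)^2$ — encoded in the $-\frac14 (s-L)^2 \ell^{1/3}$ term — does all the real work.
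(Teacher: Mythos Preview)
Your proposal has a genuine gap exactly where you suspect it. You try to compare the $w$-constrained path (with $w=(\ell,\ell+s\ell^{2/3})$) to a $w_0$-constrained path (with $w_0=(\ell,\ell)$), and you correctly compute that the deterministic curvature gain is of order $s^2\ell^{1/3}$. The problem is the two long legs $T_{w,(n,n+k)}$ and $T_{w_0,(n,n+k)}$: each fluctuates at scale $n^{1/3}$, and with only Theorem~\ref{t:moddevdiscrete} you cannot control their difference to better than $2\epsilon n^{1/3}$ on an event of probability $1-e^{-c\epsilon}$. To beat the curvature gain you would need $\epsilon \lesssim s^2(\ell/n)^{1/3}$, so the complementary probability is only $e^{-cs^2(\ell/n)^{1/3}}$, which is far from $e^{-cs^2}$ when $\ell\ll n$. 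Your attempted workaround is circular (it invokes the conclusion that the crossing at $x=\ell$ is within $C\ell^{2/3}$ of the diagonal), and ``absorbing'' the long-leg mean difference into an $\epsilon n^{1/3}$ allowance does nothing about the long-leg \emph{fluctuation}, which is the actual obstruction.

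The paper's proof avoids the long segment entirely by a dyadic return-time argument. Since $|\Gamma(n)-n|=|k|<sn^{2/3}$, on $\{\Gamma(\ell)-\ell\geq s\ell^{2/3}\}$ there is a first scale $2^{j+1}\ell$ at which $\Gamma(2^{j+1}\ell)-2^{j+1}\ell$ drops below $s((2\alpha)^{j+1}\ell)^{2/3}$ (with $\alpha=2^{1/6}$), while at $2^j\ell$ it is still above $s((2\alpha)^j\ell)^{2/3}$. Call $u$ and $v$ the crossing points at these two lines. By subpath optimality, $T_{0,u}+T_{u,v}-T_{0,v}\geq 0$. All three terms here live at scale $\Theta(2^j\ell)$, so the fluctuations are $(2^j\ell)^{1/3}$ and the curvature mean gap (computed exactly as you did for the short leg) is $\gtrsim s^2\alpha^{4j/3}(2^j\ell)^{1/3}$. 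Applying the moderate-deviation and sup/inf bounds from \cite{BSS14} over $O(\alpha^{2j/3})$ blocks of size $(2^j\ell)^{2/3}$ on each line gives $\P(B_j)\leq e^{-cs^2\alpha^{2j/3}}$, summable in $j$. The crucial idea you are missing is this use of the first return scale together with subpath optimality, which localizes the entire comparison to lengths $\Theta(2^j\ell)$ and never touches the segment to $(n,n+k)$.
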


\begin{remark}
It will be clear from the proof that the exponent $s^2$ here is determined by the exponents in the moderate deviation tail estimates. In the Poissonian LPP, where optimal moderate deviation bounds are known \cite{LM01, LMS02}, one can improve the bound further to $e^{-cs^3}$, which is optimal.
\end{remark}

{This result is of independent interest as it provides information on local transversal fluctuation of the geodesics, and has already been useful in several different contexts. For example, this has been used to study the locally Brownian nature of the pre-limiting Airy process profile for Exponential LPP on short scales and to study the time correlation of the same \cite{BG18}, and also the modulus of continuity for poylmer fluctuations and weight profiles in Poissonian LPP \cite{HS18}. A variant of this estimate for first passage percolation can be used to control the amount of backtrack in the geodesics \cite{BSS17}. For our purposes here we shall need a more refined version of this estimate; see Theorem \ref{t:carsbetter} below.} 

A variant of this result also holds for semi-infinite geodesics; see Proposition \ref{p:carinfinite}. We shall use this result to prove Theorem \ref{t:coalopt}. Using Proposition \ref{p:carinfinite}, one can also provide an alternative proof of the lower bound in \cite{Pim16} (see Remark \ref{r:lb}).

\subsection{Outline of the Proof of Theorem \ref{t:coal}}
\label{s:outline}
We describe now the basic outline of our proof of Theorem \ref{t:coal}. For some large fixed number $M$, we try to achieve coalescence at length scales $M^{i}k$ for different values of $i$. We show that at each scale coalescence happens with probability bounded below independent of $i$, and these events are approximately independent, i.e., failure to coalesce at one scale does not make coalescence at the next scale much less likely. Trying at a large number of length scales one obtains Theorem \ref{t:coal}. More precisely we establish the following.

Let $L$ be a large fixed number. For $r\in \N$ consider the points $a_{r}=(r,r+Lr^{2/3})$ and $b_{r}=(r,r-Lr^{2/3})$ (assume without loss of generality that $r^{2/3}$ is an integer). For some large $M\in \N$ let $\Gamma_1$ denote the geodesic from $a_{r}$ to $a_{Mr}$, and $\Gamma_2$ denote the geodesic from $b_r$ to $b_{Mr}$. Let ${\rm Coal}_{r,M}$ denote the event that $\Gamma_1$ and $\Gamma_2$ share a common vertex. We have the following theorem.

\begin{theorem}
\label{t:finite1}
Fix $L>0$. Then for all sufficiently large $M$ and all $r>0$ we have
$$\P({\rm Coal}_{r,M})\geq \frac{1}{2}.$$
\end{theorem}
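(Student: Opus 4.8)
The plan is to show that over a single scale from $r$ to $Mr$, the two geodesics $\Gamma_1$ (from $a_r$ to $a_{Mr}$) and $\Gamma_2$ (from $b_r$ to $b_{Mr}$) meet with probability at least $1/2$, for $M$ large. The heuristic is that each geodesic fluctuates transversally at scale $(Mr)^{2/3}=M^{2/3}r^{2/3}$ over this window, which for large $M$ dwarfs the initial vertical separation $2Lr^{2/3}$; so the "tubes" swept out by the two geodesics should overlap with high probability, and conditionally on overlap of the regions there is a constant chance the paths actually share a vertex. The first step is to make "the two geodesics come close" precise: using the local transversal fluctuation estimate (Theorem \ref{t:carsestimate}, applied with $n=Mr$ and the permitted endpoint offsets $|k'|,|k|\le L'n^{2/3}$ for an appropriate $L'$ depending on $L$), I would show that with probability $\ge 1-\varepsilon$ (for $M$ large, $\varepsilon$ small) there is some intermediate vertical line $x=\rho$, with $\rho$ of order $r$, on which $\Gamma_1(\rho)$ and $\Gamma_2(\rho)$ are within $C\rho^{2/3}$ of each other — indeed both are within $C\rho^{2/3}$ of the diagonal. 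The point of choosing $\rho\asymp r$ rather than $\rho\asymp Mr$ is that near the left endpoints the geodesics have not yet had room to wander far, so they are both pinned near height $\rho$ up to $O(\rho^{2/3})$, independently of $M$.

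The second and main step is a "local coalescence given proximity" statement: if on the line $x=\rho$ the two geodesics pass within distance $h\rho^{2/3}$ of each other, both staying within $C\rho^{2/3}$ of the diagonal, then with probability bounded below (depending on $h$ and $C$ but not on $r$ or $M$) they coalesce before, say, $x=2\rho$. I would prove this by a localization/resampling argument on a box $B$ of size $O(\rho)\times O(\rho^{2/3})$ straddling the diagonal between $x=\rho$ and $x=2\rho$: condition on the weights outside $B$ (which fixes the entry points of $\Gamma_1,\Gamma_2$ into $B$ and their exit requirements), and then run a barrier/favorable-environment event inside $B$. Concretely, one asks that inside a sub-box there is a single vertex $v^\star$ through which any near-diagonal crossing path of near-optimal weight must pass — e.g. by planting a large-weight vertex flanked by a "moat" of the remaining passage-time budget being concentrated so that detouring around $v^\star$ is suboptimal. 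The existence of such a favorable configuration with positive probability, uniformly in the conditioning, follows from the one-point moderate deviation estimates (Theorems \ref{t:Jo99}, \ref{t:moddevdiscrete}): the weight a path can collect on each side is of order $\rho$ with $\rho^{1/3}$-fluctuations and $e^{-ct}$ tails, so a constant-probability event forces both geodesics through $v^\star$, hence coalescence. A cleaner variant, which I would try first, is to use that the geodesics from $a_r$ and $b_r$ to a \emph{common} far point coalesce with positive probability by a direct comparison — but since here the endpoints $a_{Mr},b_{Mr}$ differ, one first replaces them by a common point using the robustness noted after Theorem \ref{t:coal} (coalescence to $(n,n)$ versus to $(n,n+n^{2/3})$), at the cost of another $\varepsilon$.

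Combining: with probability $\ge 1-\varepsilon$ (Step 1) the geodesics are $O(r^{2/3})$-close on some line $x=\rho\asymp r$, and conditionally on that and on the environment away from a box of size $O(r)$, with probability $\ge p_0>0$ (Step 2, via moderate deviations) they coalesce by $x=2\rho<Mr$. Hence $\P({\rm Coal}_{r,M})\ge (1-\varepsilon)p_0$; but this is only a positive constant, not $1/2$. To boost it to $1/2$ I would iterate Step 2 across $\asymp \log M$ disjoint sub-windows $[\rho_j,2\rho_j]$ with $\rho_j=2^j\rho$, $\rho_j\le Mr/2$: on each, conditionally on non-coalescence so far, the geodesics are still $O(\rho_j^{2/3})$-apart (again by Theorem \ref{t:carsestimate}, applied on the appropriate scale), so each sub-window independently offers a $\ge p_0$ chance of coalescence, giving $\P(\text{no coalescence})\le (1-\varepsilon) \cdot \varepsilon + \varepsilon$-type bounds plus $(1-p_0)^{c\log M}\to 0$. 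Choosing $M$ large enough makes $\P({\rm Coal}_{r,M})\ge 1/2$, uniformly in $r$.

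The hard part will be Step 2 — engineering a positive-probability, environment-uniform event that pins both near-optimal near-diagonal paths through a single common vertex. The subtlety is that we must defeat the worst-case conditioning on the outside environment (which could, a priori, strongly favor one geodesic taking a detour), and that the "width" of the relevant tube is $O(\rho^{2/3})$, the same scale as the fluctuations we control — so the barrier argument needs the moderate-deviation tails to be strong enough ($e^{-ct}$ suffices, but only just) to make the union bound over the $O(\rho^{2/3})$ possible entry/exit heights close. Controlling the geodesics' restriction to a box of the right aspect ratio (length $O(\rho)$, width $O(\rho^{2/3})$), so that Theorems \ref{t:Jo99} and \ref{t:moddevdiscrete} apply with the stated anisotropic endpoint offsets, is the technical crux; I expect this is exactly where the more refined version Theorem \ref{t:carsbetter} alluded to in the text is needed rather than the bare Theorem \ref{t:carsestimate}.
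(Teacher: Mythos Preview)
Your overall two-stage plan --- first establish a uniform positive lower bound on the coalescence probability over a single sub-window, then iterate over many disjoint sub-windows to drive the failure probability to zero --- is exactly the paper's strategy. But two concrete pieces of your implementation do not match and would not go through as written.

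\textbf{Independence across sub-windows.} You write that ``conditionally on non-coalescence so far, the geodesics are still $O(\rho_j^{2/3})$-apart (again by Theorem \ref{t:carsestimate}) \ldots\ so each sub-window independently offers a $\ge p_0$ chance of coalescence.'' Theorem \ref{t:carsestimate} is an unconditional estimate and does not survive conditioning on the complicated event ``no coalescence in earlier windows''; and the events you want to multiply are certainly not independent, since the same two geodesics thread all the windows. The paper's remedy is to replace your conditioning by polymer ordering: first use Theorem \ref{t:carsestimate} (unconditionally, via a union bound over scales) to show that with high probability $\Gamma_1,\Gamma_2$ pass between deterministic markers $a_i,b_i$ at each scale; then introduce the \emph{auxiliary} geodesics $\Gamma_{a_i,a_{i+1}}$ and $\Gamma_{b_i,b_{i+1}}$, which depend on disjoint strips and are genuinely independent across $i$. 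If these meet, polymer ordering forces $\Gamma_1,\Gamma_2$ to meet as well. This is what gives the product bound $(1-\alpha)^{N/2}$.

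\textbf{Scale ratio and the barrier.} The paper does not use dyadic scales $\rho_j=2^j\rho$; it uses doubly-exponential scales $p_i=2^{2^i}r$, so that $p_{i+1}=p_i^{\,2}/r$. This is forced by Proposition \ref{l:meetsubinter}, the single-window coalescence lemma: the barrier is placed at $x\asymp z^{3/2}$ inside an interval $[z,z^2]$ (with $r=1$), and the argument needs the large gap to the right of the barrier (up to $x'\approx 2.1x\ll z^2$) to control paths that try to skirt above it. A window of ratio $2$, as in your Step 2, does not leave this room. Relatedly, the barrier event in the paper is a \emph{decreasing} event in the configuration off $\gamma$ (a region where \emph{all} crossings are penalised), not an increasing ``plant a large vertex'' event; this is essential because one conditions on $\{\Gamma'_0=\gamma\}$ --- itself decreasing in the off-$\gamma$ configuration --- and combines with the barrier via FKG. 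Your ``planted vertex $v^\star$'' event is increasing, so FKG would not let you defeat the worst-case conditioning in the way you anticipate.

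Finally, Theorem \ref{t:carsbetter} is not needed here; the paper uses only Theorem \ref{t:carsestimate} for the union bound over the (only $N\asymp \log\log M$) doubly-exponential scales, with a slowly growing tolerance $s=i$. Theorem \ref{t:carsbetter} enters only later, in the proof of Theorem \ref{t:coal}.
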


Theorem \ref{t:finite1} says that if the geodesics $\Gamma_{v_1, \mathbf{n}}$ and $\Gamma_{v_2, \mathbf{n}}$ from Theorem \ref{t:coal} does not have an atypically high transversal fluctuation at distances $r$ and $Mr$, then with probability at least $\frac{1}{2}$, they coalesce in $[r,Mr]$. Observe that Theorem \ref{t:carsestimate} says that atypical transversal fluctuation at a given point is exponentially unlikely, later we establish a refinement of this showing that such events are also roughly independent if $M$ and $r$ are sufficiently large (see Theorem \ref{t:carsbetter}), thereby establishing Theorem \ref{t:coal}. 


Most of the work in this paper goes into the proof of Theorem \ref{t:finite1}. This is done via a bootstrapping argument. We first show that the probability is bounded below by an arbitrary small constant independent of $r$ and $M$ (see Proposition \ref{l:meetsubinter}).
This follows from showing at some horizontal length scale distance $D$ (where $r\ll D \ll Mr$) with a probability bounded away from zero there exists a barrier of width $O(D)$ and height $O(D^{2/3})$ just above the geodesic $\Gamma_{2}$, such that any path passing through the barrier is penalised a lot. Using Theorem \ref{t:carsestimate} and the FKG inequality we show that in presence of such a barrier and in environment that is typical otherwise, $\Gamma_{2}$ will merge with $\Gamma_{1}$ before crossing the barrier region. The construction of the barrier here is similar to one present in \cite{BSS14}, and we shall quote many of the probabilistic estimates in that paper throughout our proof.

We note here that events forcing coalescence of finite and infinite geodesics have been studied in a number of works in different settings. Some of these are done using ergodicity in non-integrable settings and are inherently not quantitative \cite{New95,LN96,AH16}, while the preprint \cite{DPM17} considers a rectangle of size $n^{1+o(1)}\times n^{2/3}$ and show that best paths constrained to stay within this rectangle coalesce with rather weak probability lower bound of $n^{-o(1)}$ \footnote{This was recently brought to our attention by Ron Peled, over a year after the first version of this paper was posted on arXiv. The coalescence estimate in \cite{DPM17} nevertheless is sufficient for their purpose of establishing a central limit theorem for paths constrained to be in an off-scale thin rectangle.}.  Our approach of further developing the combination of geometric techniques and integrable inputs, introduced in \cite{BSS14}, together with the control on local fluctuations of geodesics (Theorem \ref{t:carsestimate}), in contrast, leads to a proof that coalescence happens with uniformly positive probability at the correct length scale.

\subsection{Notations}
For easy reference purpose, let us collect here a number of notations, some of which have already been introduced, that we shall use throughout the remainder of this paper. Define the partial order $\preceq $ on $\Z^2$ by $u = (x,y) \preceq u' = (x',y')$ if $x \leq x'$, and $y \leq y'$. For $a,b\in \Z^2$ with $a\preceq b$, let $\Gamma_{a,b}$ denote the geodesic from $a$ to $b$ in the Exponential LPP, and $T_{a,b}$ denotes the weight of the geodesic $\Gamma_{a,b}$.

For an increasing path $\gamma$ and $\ell\in \Z$, $\gamma(\ell)\in \Z$ will denote the maximum number such that $(\ell,\gamma(\ell))\in \gamma$ and $\gamma^{-1}(\ell)\in \Z$ be the maximum number such that $(\gamma^{-1}(\ell),\ell)\in \gamma$. Let $\l(\gamma)$ denote the weight of the increasing path $\gamma$. Also for $a<b<c<d \in \Z$, and $\gamma$ an increasing path from $(a,a')$ to $(d,d')$, we define \[\gamma[b,c]=\{\gamma(x):b\leq x\leq c\}\]
 as the part of $\gamma$ between the vertical lines $x=b$ and $x=c$.

 For $u=(x,y) \preceq u'=(x',y')$ in $\Z^2$, let $d(u,u')=(x'-x)+(y'-y)$ denote the $\l_1$ distance between $u$ and $u'$. Define
\[\widetilde{T}_{u,u'}=T_{u,u'}-\E(T_{u,u'}),\]
\[\widehat{T}_{u,u'}=T_{u,u'}-2d(u,u').\]
It is easy to see that $\widehat{T}_{u,u'}\leq \widetilde{T}_{u,u'}$. Roughly speaking, if the slope of the line joining $u$ and $u'$ is close to $1$, then $\widetilde{T}_{u,u'}$ can be well approximated by $\widehat{T}_{u,u'}$ (see Section 9 of \cite{BSS14}).

Also for any set $R\subseteq \R^2$, let $T^R_{u,v}$ denote the weight of the maximal path from $u$ to $v$ that avoids the region $R$. Let $^RT_{u,v}$ denote the weight of the maximal path from $u$ to $v$ that intersects the region $R$. Also define $\widetilde{T}^R_{u,v}=T^R_{u,v}-\E T_{u,u'}$ and $\widehat{T}^R_{u,v}=T^R_{u,v}-2d(u,u')$. Similarly define $^R\widetilde{T}_{u,v}$ and $^R\widehat{T}_{u,v}$.

We shall use the notation $\llbracket \cdot, \cdot \rrbracket$ to denote discrete intervals, i.e., $\llbracket a,b \rrbracket$ will denote $[a,b]\cap \Z$. We shall often assume without loss of generality that fractional powers of integers i.e., $k^{2/3}$ or rational multiples of integers as integers themselves. This is done merely to avoid the notational overhead of integer parts, and it is easy to check that such assumptions do not affect the proofs in any substantial way. In the various theorems and lemmas, the values of the constants $C,C',c,c'$ appearing in the bounds change from one line to the next, and will be chosen small or large locally.

\subsection{Organisation of the paper}
The rest of the paper is organised as follows. In Section \ref{s:cars} we prove Theorem \ref{t:carsestimate} and also prove a refinement Theorem \ref{t:carsbetter}. In Section \ref{s:coales},  we start prove Theorem  \ref{t:finite1} by reducing it to the key Proposition \ref{l:meetsubinter}, and use it to establish Theorem \ref{t:coal}. The next two sections are devoted to the proof of Proposition \ref{l:meetsubinter}. In Section \ref{s:events}, we define a  geometric structure and a number of key events used in the rest of the proof. Section \ref{s:const} constructs a collection of coalescing paths on a combination of the key events and estimates the corresponding probabilities, and concludes the proof of Proposition \ref{l:meetsubinter}. Section \ref{s:coalopt} contains the proof of Theorem \ref{t:coalopt}. This final section is independent of the rest of the paper except that we use a variant of Theorem \ref{t:carsestimate}.

\subsection*{Acknowledgements}
RB thanks Alan Hammond for useful discussions and explaining his work on coalescent polymer trees in Brownian last passage percolation, and Christopher Hoffman for asking a question which led to Theorem \ref{t:coalopt}. He also thanks Vladas Sidoravicius for many useful discussions, and Ron Peled for pointing out the related work \cite{DPM17}. RB was partially supported by an AMS-Simons Travel Grant during the early phases of this project and is partially supported by an ICTS Simons Junior Faculty Fellowship and a Ramanujan Fellowship from Govt. of India. SS is supported by a Lo\'{e}ve Fellowship. AS is supported by NSF grant DMS-1352013 and a Simons Investigator grant. Part of this research was performed during two visits of RB to the Princeton Mathematics department, he gratefully acknowledges the hospitality. 


\section{A path regularity estimate}
\label{s:cars}
Our objective in this section is to prove Theorem \ref{t:carsestimate} and prove a refinement of the same. The proof of Theorem \ref{t:carsestimate} is reminiscent of an argument in \cite{New95} in the context of first passage percolation, however is much stronger than the result there as for first passage percolation one has much weaker information about fluctuation of passage times and transversal fluctuations.




\begin{proof}[Proof of Theorem \ref{t:carsestimate}]
We show only that $\P[(\Gamma(\l)-\l)\geq sr^{2/3}] \leq e^{-cs^2}$. The other parts are similar. Also for convenience, we assume that $k'=0$, so that $\Gamma$ is the geodesic from $(0,0)$ to $(n,n+k)$. For general $k'\leq L\l^{2/3}$, the argument is similar.

Choose $\alpha=2^{\frac{1}{6}}$.
For $j\geq 0$, let $B_j$ denote the event that $\Gamma(2^j\l)-2^j\l\geq s((2\alpha)^j\l)^{2/3}$ and $\Gamma(2^{j+1}\l)-2^{j+1}\l\leq s((2\alpha)^{j+1}\l)^{2/3}$. Note that as $|\Gamma(n)-n|=|k|\leq Ln^{2/3}<sn^{2/3}$ for all $s\geq 2L$, hence,
\[\{\Gamma(\l)-\l\geq s\l^{2/3}\}\subseteq \bigcup_{j\geq 0} B_j.\]
See Figure \ref{f:cars}.
\begin{figure}[h]
\centering
\includegraphics[width=0.4\textwidth]{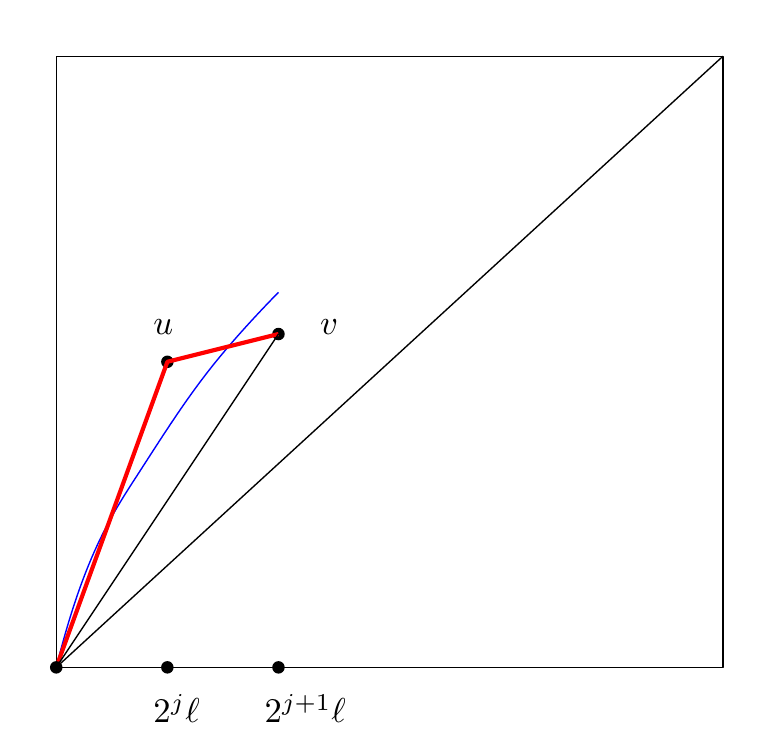}
\caption{Proof of Theorem \ref{t:carsestimate}: the blue curve in the figure is the graph of the function $y=x+sx^{2/3}$.  On the event $\Gamma(\ell)-\ell\geq s\ell^{2/3}$, there must exist some $j\geq 0$ such that $\Gamma$ crosses the blue curve on the interval $[2^j\ell, 2^{j+1}\ell]$. The event $B_j$ is a slightly more involved variant of the event described above. On the event $B_{j}$ one must have points $u$ and $v$ as above such that the path from $(0,0)$ to $u$ and then to $v$ is atypically long, hence this event is unlikely.}
\label{f:cars}
\end{figure}

Hence it suffices to show that
\[\P(B_j)\leq e^{-cs^2\alpha^{2j/3}}.\]
Let $B_{j,t,t'}$ denote the event that
\[\Gamma(2^j\l)\in U_t:=\left[2^j\l+(s+t)((2\alpha)^j\l)^{2/3},2^j\l+(s+t+1)((2\alpha)^j\l)^{2/3}\right]\]
and
\[\Gamma(2^{j+1}\l)\in V_{t'}:= \left[2^{j+1}\l+(s-t')((2\alpha)^{j+1}\l)^{2/3},2^{j+1}\l+(s-t'+1)((2\alpha)^{j+1}\l)^{2/3}\right].\]
for $t,t'=0,1,2,\ldots$.  Clearly,
\[B_{j,t,t'}\subseteq \{\sup_{u\in U_t,v\in V_{t'}}( T_{0,(2^j\l,u)}+T_{(2^j\l,u),(2^{j+1}\l,v)}-T_{0,(2^{j+1}\l,v)})\geq 0\}.\]

If $\mathcal{S}$ is the line segment joining $\mathbf{0}$ to some vertex $v\in V_{t'}$, then it is easy to see that
\[\mathcal{S}(2^j\l)-2^j\l\leq \frac{\alpha^{2/3}}{2^{1/3}}(s-t'+1)((2\alpha)^{j}\l)^{2/3}.\]
Thus computing expectations, it follows from Lemma $9.4$ of \cite{BSS14} \footnote{Using similar observations as made in Section $9$ of \cite{BSS14} for Poissonian LPP, from Theorem \ref{t:Jo99} and Theorem \ref{t:moddevdiscrete}, it follows that, in Exponential LPP model, for fixed $\psi>0$, there exists $r_0=r_0(\psi)$ such that for points $u=(x,y)$ and $u'=(x',y')$ in $\Z^2$ such that  $x'-x=r\geq r_0$, and $\frac{y'-y}{x'-x}\in (\frac{1}{\psi},\psi)$, one has, $\E(T_{u,u'})=(\sqrt{r}+\sqrt{y'-y})^2+O(r^{1/3})=d(u,u')+2\sqrt{r(y'-y)}+O(r^{1/3})$. Hence, Corollary $9.3$, Lemma $9.4$ and Lemma $9.5$ from \cite{BSS14} continue to hold for Exponential LPP as well with the same proof.} and the fact that $2^{1/2}>\alpha>1$, that there exists some constant $c_1$ not depending on $\l,s,t,t',j$, such that for all $u\in U_t, v\in V_{t'}$, and all $s$ sufficiently large,
\[\E(T_{0,(2^j\l,u)})+\E(T_{(2^j\l,u),(2^{j+1}\l,v)})\leq \E(T_{0,(2^{j+1}\l,v)})-c_1((s+t+t')\alpha^{\frac{2j}{3}})^2(2^j\l)^{1/3}.\]
Using the moderate deviation estimates for supremum and infimum of the lengths of a collection of paths given in Proposition $10.1$ and Proposition $10.5$ of \cite{BSS14} (and breaking $U_t$ and $V_{t'}$ into consecutive intervals of length $(2^j\l)^{2/3}$ and taking $\alpha^{4j/3}$-many union bounds), and using similar arguments as in the proof of Lemma $11.3$ of \cite{BSS14}, this implies,
\[\P(B_{j,t,t'})\leq e^{-c(s+t+t')^2\alpha^{\frac{2j}{3}}}.\]
Summing over $t,t',j$ gives the result.
\end{proof}

\subsection{An improved regularity estimate}

Observe that Theorem \ref{t:carsestimate} says that at any given length scale $\ell$, the geodesic is unlikely to have a transversal fluctuation that is much larger than $\ell^{2/3}$. Our next result will show a decorrelation between these unlikely events at well separated length scales.

Let $\Gamma$ denote the geodesic from $\mathbf{0}$ to $\mathbf{n}$. For $k, M\in \Z$ fixed, and any vertex $v\in \Z^2$, let $A^v_{i}$ denote the event that $|\Gamma_{v,\mathbf{n}}(M^{i}k)-M^{i}k| \geq s (M^{i}k)^{2/3}$. Also let $A_i:=A_i^\mathbf{0}$. We have the following theorem.

\begin{theorem}
\label{t:carsbetter}
There exist positive constants $s_0, M_0, c,c'$ such that for all $s>s_0$, $M>M_0$ and $\ell,k\in \Z$ we have
$$\limsup_{n\to \infty} \P\left(\sum_{i=1}^{\ell} \ind_{A_{i}}\geq 2e^{-cs}\ell\right) \leq e^{-c'\ell}.$$
\end{theorem}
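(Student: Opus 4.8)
The plan is to show that the indicator variables $\ind_{A_i}$, $1 \le i \le \ell$, are dominated by a sequence of independent (or sufficiently weakly dependent) Bernoulli variables each with mean $\le e^{-cs}$, and then apply a standard large deviation bound for sums of independent Bernoullis to get the $e^{-c'\ell}$ decay. By Theorem \ref{t:carsestimate} (or rather its semi-infinite-type variant applied to the geodesic emanating from a vertex $v$ near scale $M^{i-1}k$), we already know $\P(A_i) \le e^{-cs^2} \le e^{-cs}$ for each $i$ individually; the content here is the joint statement, which requires a decorrelation input.

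The key step is a \emph{conditional} version of the one-point transversal fluctuation estimate. First I would reveal the geodesic $\Gamma = \Gamma_{\mathbf{0},\mathbf{n}}$ up to the vertical line $x = M^{i-1}k$, that is, condition on the location $v_{i-1} := (M^{i-1}k, \Gamma(M^{i-1}k))$ and on all the edge/vertex weights in the slab $\{x \le M^{i-1}k\}$. On the event $A_{i-1}^c \cap \cdots \cap A_1^c$ (so that $v_{i-1}$ is within $s(M^{i-1}k)^{2/3}$ of the diagonal), the portion of $\Gamma$ beyond this line is the geodesic from $v_{i-1}$ to $\mathbf{n}$, which depends only on weights in $\{x > M^{i-1}k\}$ — a region independent of the conditioning. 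Thus, conditionally on the revealed information, $\ind_{A_i}$ is the indicator that a \emph{fresh} geodesic, started from a point $v_{i-1}$ that is not too far off-diagonal, has transversal fluctuation $\ge s(M^i k)^{2/3}$ at horizontal distance $(M^i - M^{i-1})k \asymp M^i k$ from its start. Provided $M \ge M_0$ is large enough that $s(M^{i-1}k)^{2/3} \le L (M^i k)^{2/3}$ (i.e. $M^{2/3} \ge s/L$, absorbed by enlarging $s_0, M_0$ appropriately, or by noting the off-diagonal displacement is $o$ of the scale we are testing), Theorem \ref{t:carsestimate} applies to this fresh geodesic and yields
\[
\P\bigl(A_i \,\big|\, \text{weights in } \{x \le M^{i-1}k\},\ A_{i-1}^c,\dots,A_1^c\bigr) \le e^{-cs}
\]
on the conditioning event. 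Iterating, the sequence $(\ind_{A_i})$ is stochastically dominated by i.i.d.\ Bernoulli$(e^{-cs})$ variables $(X_i)$ — more precisely, one couples so that whenever all earlier $A_j^c$ hold, $\ind_{A_i} \le X_i$; and once some $A_j$ fails the bound on $\sum \ind_{A_i}$ is only helped by the fact that we can restart the argument from the next scale (or simply bound crudely, since a single failure contributes $1 \le e^{-cs}\ell$ once $\ell$ is large). Then
\[
\P\Bigl(\textstyle\sum_{i=1}^{\ell} \ind_{A_i} \ge 2 e^{-cs}\ell\Bigr) \le \P\Bigl(\textstyle\sum_{i=1}^{\ell} X_i \ge 2 e^{-cs}\ell\Bigr) \le e^{-c'\ell}
\]
by a Chernoff bound for Binomial$(\ell, e^{-cs})$ exceeding twice its mean.

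The main obstacle I anticipate is making the conditioning genuinely clean: the event $A_i$ is defined via $\Gamma_{\mathbf{0},\mathbf{n}}$, whose restriction to $\{x > M^{i-1}k\}$ equals $\Gamma_{v_{i-1},\mathbf{n}}$ only after we know $v_{i-1}$, and $v_{i-1}$ itself depends on the left slab; one must be careful that "revealing $\Gamma$ up to a vertical line" is a valid stopping-time-type decomposition for LPP (it is — the geodesic up to line $x=m$ is measurable with respect to weights in $\{x \le m\}$ together with boundary last-passage values, by the standard Markov property of LPP along a line). A secondary technicality is the mismatch of scales: $A_i$ tests fluctuation at the \emph{global} scale $M^i k$ measured from $\mathbf{0}$, whereas the fresh geodesic from $v_{i-1}$ naturally fluctuates at scale measured from $M^{i-1}k$; since these scales are comparable up to the constant $M^{2/3}$, and since $A_i$ with the fluctuation bound $s(M^i k)^{2/3}$ is, after subtracting the at-most-$s(M^{i-1}k)^{2/3}$ head start, still asking for fluctuation $\ge (s - s M^{-2/3})(M^i k)^{2/3} \ge \tfrac{s}{2}(M^i k)^{2/3}$ of the fresh geodesic at distance $\asymp M^i k$, Theorem \ref{t:carsestimate} delivers $e^{-c(s/2)^2} \le e^{-cs}$ for $s \ge s_0$, and one just has to track these constants. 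Once the conditional estimate is in hand, the passage to the stochastic domination and the Chernoff bound is routine.
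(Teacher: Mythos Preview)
Your high-level strategy --- stochastic domination by i.i.d.\ Bernoulli$(e^{-cs})$ variables, then a Chernoff/Hoeffding bound --- is exactly what the paper does (via Proposition \ref{t:allcars} and Hoeffding). The gap is in how you establish the domination. The ``reveal weights up to $x=M^{i-1}k$'' step does not give the independence you claim: the crossing point $v_{i-1}$ is the argmax of $y\mapsto T_{\mathbf{0},(M^{i-1}k,y)}+T_{(M^{i-1}k,y),\mathbf{n}}$, which depends on the right slab as well as the left. Consequently, given the left-slab weights, the earlier events $A_1^c,\ldots,A_{i-1}^c$ (which are functions of $v_{i-1}$ via the induced left-half geodesic) are themselves right-slab events, and conditioning on them biases the right-slab law; equivalently, conditioning on $v_{i-1}$ directly tilts the environment to its right. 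There is no spatial stopping-time decomposition of the geodesic that leaves the future environment conditionally fresh. Moreover, even if the bound $\P(A_i\mid A_1^c,\ldots,A_{i-1}^c)\le e^{-cs}$ held, it would not yield stochastic domination: you need the bound uniformly over \emph{all} histories, including those where some $A_j$ occurred --- and on $A_j$ the point $v_j$ can sit arbitrarily far from the diagonal, so your ``restart from the next scale'' does not get off the ground.

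The paper instead proves $\P(\bigcap_{i\in F}A_i)\le e^{-cs|F|}$ directly, by induction on $|F|$. The decoupling is achieved not by conditioning on the crossing point but by taking a \emph{supremum} over possible crossing points: one bounds $A_i$ via the dyadic events $B_j^v$ from the proof of Theorem \ref{t:carsestimate}, each contained in a left-slab-measurable event of the form $\{\sup_{u\in U_j,w\in V_j}(T_{v,u}+T_{u,w}-T_{v,w})\ge 0\}$, and then re-roots the remaining events $A_{i+1}^{'},A_{i+2}^{'},\ldots$ (via polymer ordering, Lemma \ref{l:porder}) at the extreme point of the interval $V_j$ on the separating line. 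This sup-and-reroot device is what produces genuine independence, and the recovery mechanism built into the $B_j$'s (the geodesic must come back near the diagonal by some intermediate dyadic scale, since its endpoint is near the diagonal) is exactly what handles the ``bad'' histories that your argument leaves unaddressed.
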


Theorem \ref{t:carsbetter} will follow from the next proposition.

\begin{proposition}
\label{t:allcars}
Let $F\subseteq [\l]$. Then there exist positive constants $c,s_0,M_0$ such that for all $s\geq s_0, M\geq M_0$,
\[\P(|\Gamma(M^ik)-M^ik|\geq s(M^ik)^{2/3} \mbox{ for all } i\in F)\leq e^{-cs|F|}.\]
\end{proposition}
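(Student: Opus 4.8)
The plan is to prove Proposition~\ref{t:allcars} by iterating the single-scale estimate from Theorem~\ref{t:carsestimate} along the scales $M^i k$ for $i \in F$, exploiting the fact that widely separated scales are essentially independent because the geodesic segments that realize an atypical fluctuation at scale $M^i k$ live (with overwhelming probability) in disjoint regions of $\Z^2$. Write $F = \{i_1 < i_2 < \cdots < i_m\}$ with $m = |F|$. The key point is a conditional version of Theorem~\ref{t:carsestimate}: conditioned on the location $\Gamma(M^{i_j}k) = w_j$ of the geodesic at scale $M^{i_j}k$ (and on $w_j$ being in the ``bad'' range $|w_j - M^{i_j}k| \geq s(M^{i_j}k)^{2/3}$, but not too large, say $\le (M^{i_j}k)^{2/3}\cdot e^{cs}$ using Theorem~\ref{t:carsestimate} itself to truncate), the portion of $\Gamma$ between scales $M^{i_j}k$ and $M^{i_{j+1}}k$ is the geodesic between $w_j$ and $\Gamma(M^{i_{j+1}}k)$, which is again governed by the regularity estimate applied at the intermediate point $w_j$. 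Since $M$ is large, the transversal fluctuation contribution inherited from $w_j$ (of order $s(M^{i_j}k)^{2/3}$, i.e.\ at most $L'\ell'^{2/3}$ with $\ell' = M^{i_{j+1}}k$ once $M^{i_{j+1}-i_j}$ dominates) fits into the hypotheses $|k'| \le L\ell^{2/3}$ of Theorem~\ref{t:carsestimate} with an absolute constant $L$, so each conditional bad event has probability at most $e^{-c s^2}$, and actually $e^{-cs}$ suffices for us.

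The execution would go as follows. First I would set up the filtration: reveal the geodesic $\Gamma$ and process scales in increasing order $i_1, i_2, \dots, i_m$. At step $j$, I condition on the event $\cA_{i_1} \cap \cdots \cap \cA_{i_{j-1}}$ (all prior scales bad) together with the revealed positions $w_1, \dots, w_{j-1}$, and I want to bound $\P(\cA_{i_j} \mid \cdot)$. By the Markov-type property of geodesics — the segment of $\Gamma$ after the last revealed point $w_{j-1}$ is the geodesic from $w_{j-1}$ onward, and its distribution depends on the past only through $w_{j-1}$ and the weights already seen near it — and by restarting Theorem~\ref{t:carsestimate} from $w_{j-1}$ toward $\mathbf n$, I get $\P(\cA_{i_j} \mid \cA_{i_1}, \dots, \cA_{i_{j-1}}, w_1, \dots, w_{j-1}) \le e^{-cs}$, provided $w_{j-1}$ is not pathologically far (handled by an a priori union bound over the truncation levels $t \ge 0$, each contributing $e^{-c(s+t)^2}$, which sums to $e^{-cs}$ and does not spoil the product). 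Multiplying the conditional bounds over $j = 1, \dots, m$ gives $\P(\cap_{i \in F} \cA_i) \le (e^{-cs})^{m} \cdot (\text{correction}) \le e^{-cs|F|/2}$, say, after absorbing the truncation corrections; relabeling the constant finishes the proof. For Theorem~\ref{t:carsbetter} one then takes a union bound over all $F \subseteq [\ell]$ of a given size $f = \lceil 2e^{-cs}\ell\rceil$: the number of such $F$ is $\binom{\ell}{f} \le (e\ell/f)^f$, and $(e\ell/f)^f \cdot e^{-csf} \le e^{-c'\ell}$ once $s$ is large enough that $e^{-cs/2} \cdot (e/(2e^{-cs})) = \tfrac12 e^{-cs/2} e^{cs+1} < 1$, i.e.\ the entropy term $f \log(e\ell/f) \sim 2e^{-cs}\ell \cdot (cs + O(1))$ is beaten by $csf \sim 2e^{-cs}\ell \cdot cs$ — this needs the elementary check that $\log(e\ell/f)$ is itself $O(s)$ when $f = \Theta(e^{-cs}\ell)$, which holds, and the slack in the exponent gives the $e^{-c'\ell}$ decay.

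I expect the main obstacle to be making the conditioning step fully rigorous: one has to argue carefully that conditioning on the bad events at earlier scales (which are lower bounds on $|\Gamma(M^{i_j}k) - M^{i_j}k|$, i.e.\ events about the \emph{whole} geodesic, not just a local patch) does not corrupt the application of Theorem~\ref{t:carsestimate} at the next scale. The clean way is to reveal the geodesic vertex by vertex up to the column $x = M^{i_{j-1}}k$, note that the future of the geodesic is then a geodesic started at the revealed point $w_{j-1} = \Gamma(M^{i_{j-1}}k)$ in an environment that is independent of the revealed weights (or, more carefully, in the environment to the right of that column, using that an up-right geodesic from $w_{j-1}$ to $\mathbf n$ only uses weights $\succeq w_{j-1}$), and then Theorem~\ref{t:carsestimate} applies verbatim with starting point $w_{j-1}$, provided $|w_{j-1} - M^{i_{j-1}}k| \le L (M^{i_j}k)^{2/3}$, which is where largeness of $M$ and the truncation enter. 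A secondary but routine point is checking that the slope of the line from $w_{j-1}$ to $\mathbf n$ stays in a compact subset of $(0,\infty)$ so the moderate deviation estimates apply uniformly; this is automatic since $|w_{j-1}| \ll n$ and $|k| \le Ln^{2/3}$. Once the conditioning is pinned down, the rest is the product bound plus the entropy computation for Theorem~\ref{t:carsbetter}, both of which are straightforward.
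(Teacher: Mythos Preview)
Your proposal has a genuine gap at the conditioning step, and it is exactly the obstacle you flagged yourself but did not resolve. You assert a ``Markov-type property'': after revealing $\Gamma$ up to column $M^{i_{j-1}}k$ and recording $w_{j-1}=\Gamma(M^{i_{j-1}}k)$, the continuation of $\Gamma$ is the geodesic from $w_{j-1}$ to $\mathbf n$ in an environment independent of what has been revealed. The first clause is true (subpath property), but the independence claim is false. The event $\{\Gamma(M^{i_{j-1}}k)=w_{j-1}\}$ (or more generally the path of $\Gamma$ up to that column) depends on the \emph{entire} weight field, including the weights to the right of the column: the geodesic chose to arrive at $w_{j-1}$ rather than elsewhere precisely because of what lies ahead. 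Hence conditioning on this event biases the weights $\succeq w_{j-1}$, and you cannot invoke Theorem~\ref{t:carsestimate} for $\Gamma_{w_{j-1},\mathbf n}$ under the unconditioned law. Revealing the weights column by column does not help either, since then $w_{j-1}$ is not yet determined.

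The paper's proof circumvents this by never conditioning on the geodesic. Instead it reuses the dyadic decomposition from the proof of Theorem~\ref{t:carsestimate}: the bad event at scale $M^{r+1}$ is contained in a union of events $B^v_j$ (the geodesic is above the curve at $2^j x$ but below it at $2^{j+1}x$), and each $B^v_j$ intersected with the future bad events is contained in the intersection of
\[
\Bigl\{\sup_{u\in U_j,w\in V_j}(T_{v,u}+T_{u,w}-T_{v,w})\ge 0\Bigr\}
\quad\text{and}\quad
\Bigl\{\bigcup_{w\in V_j}\bigcap_{i\ge r+2} A_i^{'w}\Bigr\}.
\]
The first event is measurable with respect to the weights left of column $2^{j+1}x$; the second, after replacing the union over $w$ by the single topmost point of $V_j$ via polymer ordering (Lemma~\ref{l:porder}), is measurable with respect to the weights to the right. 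This gives genuine independence and allows the induction to close. The polymer ordering step is the device that replaces your intended ``reveal $w_{j-1}$'' move: rather than conditioning on the actual exit point, one dominates by the worst-case deterministic exit point, which decouples the two halves. Your proposal is missing this idea.
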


We postpone the proof of Proposition \ref{t:allcars} for now, and first show how Theorem \ref{t:carsbetter} follows from Proposition \ref{t:allcars}.

\begin{proof}[Proof of Theorem \ref{t:carsbetter}]
Let $B_i$ s for $i\in [\l]$ be i.i.d. Bernoulli random variables with success probability $e^{-cs}$. Then Theorem \ref{t:allcars} implies that $(\ind_{A_1},\ind_{A_2},\ldots,\ind_{A_\l})\preceq _{ST}(B_1,B_2,\ldots,B_\l)$. Hence, $\sum_{i=1}^{\l}\ind_{A_i}\preceq_{ST}
\sum_{i=1}^{\l}B_i $. Theorem \ref{t:carsbetter} now follows from Hoeffding's inequality applied to $\sum_{i=1}^{\l}B_i$.
\end{proof}
{For the proof of Proposition \ref{t:allcars} we will need the following lemma that is basic and was stated in \cite{BSS14}. As we would have several occasions to resort to this lemma, we restate it here without proof.}

{\begin{lemma}[\cite{BSS14}, Lemma 11.2, Polymer Ordering]
\label{l:porder}
Consider points $a=(a_1,a_2),a'=(a_1,a_3),b=(b_1,b_2),b'=(b_1,b_3)$ such that $a_1 < b_1$ and $a_2\leq a_3\leq b_2\leq b_3$. Then we have $\Gamma_{a,b}(x)\leq \Gamma_{a',b'}(x)$ for all $x\in \llbracket a_1,b_1\rrbracket$.
\end{lemma}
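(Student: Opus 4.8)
The plan is to argue by contradiction from the negation of the claim: suppose that, on the common strip of columns $\llbracket a_1,b_1\rrbracket$, the ``lower'' geodesic $\Gamma_{a,b}$ rises strictly above the ``upper'' geodesic $\Gamma_{a',b'}$ at some column $x_0$, i.e.\ $\Gamma_{a,b}(x_0)>\Gamma_{a',b'}(x_0)$ (recall $\Gamma(x)$ denotes the maximal height of $\Gamma$ at column $x$). The argument uses one soft planarity principle and one probabilistic input. The planarity principle: realising up-right lattice paths as staircase curves in $\R^2$, any intersection of two such curves contains a lattice vertex, so up-right paths cannot cross without sharing a vertex; consequently, if two up-right paths are disjoint apart from at most one shared endpoint and their remaining endpoints are coordinatewise comparable, then the path with the $\preceq$-smaller endpoint(s) stays weakly below the other at every column of their common range (proved by an elementary column-by-column induction, using that reaching the other path's height at any column would create a common vertex that the disjointness hypothesis excludes). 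The probabilistic input, and the only one, is almost sure uniqueness of finite geodesics in Exponential LPP, valid since the vertex weights are i.i.d.\ with a continuous law.

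First I would use these to pin down $\Gamma_{a,b}\cap\Gamma_{a',b'}$. Since $\Gamma_{a,b}$ starts at $a$ with $a_2\le a_3$ and ends at $b$ with $b_2\le b_3$, the planarity principle shows that if the two geodesics shared no vertex then $\Gamma_{a,b}$ would lie everywhere weakly below $\Gamma_{a',b'}$, contradicting the crossing at $x_0$; hence they share a vertex. Moreover any two common vertices $w\preceq w'$ are joined inside each geodesic by the (unique) geodesic $\Gamma_{w,w'}$, so those portions coincide; therefore $\Gamma_{a,b}\cap\Gamma_{a',b'}$ is a single contiguous common sub-path, running from its $\preceq$-minimal vertex $w_1$ to its $\preceq$-maximal vertex $w_2$. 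On the stretch $\llbracket w_{1,1},w_{2,1}\rrbracket$ the two geodesics perform exactly the same steps, so the crossing column must satisfy $x_0<w_{1,1}$ or $x_0>w_{2,1}$.

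These two cases are mirror images, so I would treat $x_0<w_{1,1}$. On $\llbracket a_1,w_{1,1}\rrbracket$ the relevant portions are the geodesics $\Gamma_{a,w_1}$ and $\Gamma_{a',w_1}$, which by minimality of $w_1$ are disjoint except at the shared endpoint $w_1$; their left endpoints $a,a'$ lie on the line $x=a_1$ with $a_2\le a_3$, so the planarity principle gives $\Gamma_{a,w_1}(x)\le\Gamma_{a',w_1}(x)$ for all $x\in\llbracket a_1,w_{1,1}\rrbracket$. For $x<w_{1,1}$ one has $\Gamma_{a,b}(x)=\Gamma_{a,w_1}(x)$ and $\Gamma_{a',b'}(x)=\Gamma_{a',w_1}(x)$, while at $x=w_{1,1}$ both $\Gamma_{a,b}$ and $\Gamma_{a',b'}$ reach height $w_{1,2}$ from the left (their before-$w_1$ pieces terminate at $w_1$) and then take the identical upward steps of the common stretch, so $\Gamma_{a,b}(w_{1,1})=\Gamma_{a',b'}(w_{1,1})$. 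Hence $\Gamma_{a,b}(x)\le\Gamma_{a',b'}(x)$ throughout $\llbracket a_1,w_{1,1}\rrbracket$, contradicting $\Gamma_{a,b}(x_0)>\Gamma_{a',b'}(x_0)$. The case $x_0>w_{2,1}$ is identical using $\Gamma_{w_2,b}$, $\Gamma_{w_2,b'}$, their shared left endpoint $w_2$, and $b_2\le b_3$. The boundary cases $a=a'$ or $b=b'$ need no separate treatment: they merely force $w_1=a$, resp.\ $w_2=b$, collapsing the corresponding range to a single column.

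I expect the only genuinely fussy point to be the planarity principle, in particular the bookkeeping at the join columns $w_{1,1},w_{2,1}$ where a geodesic may contain a vertical run and one must compare \emph{maximal} heights; this is exactly where one uses that on $\llbracket w_1,w_2\rrbracket$ the two geodesics are literally identical. Everything else is soft: that disjoint up-right staircases are linearly ordered across a strip, and that any intersection of two up-right staircases contains a lattice vertex. One can alternatively phrase the meeting-structure step via the standard uncrossing/path-swapping argument (building, through a common vertex $w$, the concatenations $a\leadsto w\leadsto b'$ and $a'\leadsto w\leadsto b$ and comparing with the always-valid uncrossing of $\Gamma_{a,b'}$ and $\Gamma_{a',b}$), but the geometric content is the same, and I find the direct planarity version cleaner and more uniform across the degenerate endpoint cases.
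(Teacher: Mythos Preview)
Your argument is correct and is the standard planarity argument for polymer ordering in directed last passage percolation. The paper itself does not supply a proof of this lemma: it is restated verbatim from \cite{BSS14} with the explicit comment ``we restate it here without proof,'' so there is no proof in the paper to compare your approach against. Your route---reducing to the fact that two up-right lattice staircases cannot cross without sharing a lattice vertex, combined with almost-sure uniqueness of geodesics to force the intersection to be a single contiguous common sub-path---is exactly the intended one, and your handling of the bookkeeping at the join columns $w_{1,1},w_{2,1}$ (where one must compare maximal heights) is careful and correct.
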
}

We shall now prove Proposition \ref{t:allcars}. Before starting with the technicalities of the proof let us explain the basic idea which is however simple. Consider the case $k=1$ and $F=\{1,2\}$. Using constructions as in the proof of Theorem \ref{t:carsestimate}, for $M$ sufficiently large we can approximate the event $A_1:=|\Gamma(M)-M|\geq sM^{2/3}$, up to a very small error in probability by an event $B_1$ that depends only on the random field on $\llbracket 0,D\rrbracket \times \Z$, for some $M\ll D \ll M^2$. The main point is that even on the unlikely event $A_1$, it is very likely that $|\Gamma(D)-D|\leq s' D^{2/3}$ for some $s'$ that is not too large. This implies the event $A_2:=|\Gamma(M^2)-M^2|\geq sM^{4/3}$ can then be well approximated by another unlikely event $B_2$ that is measurable with respect to the random field on $\llbracket D+1, n\rrbracket \times \Z$, and hence independent of $B_1$. The following proof makes this idea precise.

\begin{proof}[Proof of Proposition \ref{t:allcars}]
Without loss of generality we assume $k=1$. Also let $F=[\l]$. For any fixed subset $F\subseteq[\l]$, the proof follows similarly. Fix $\alpha<2^{1/2}$. Choose $C'$ large enough so that $\alpha^{2C'/3}\geq 2$. Let $M=2^C$ where $C$ is large enough such that $\alpha^{C'}<2^{C-C'}$. Let $A^{'v}_{i}$ denote the event
that $\Gamma_{v,\mathbf{n}}(M^{i}k)-M^{i}k \geq s (M^{i}k)^{2/3}$. Also let $A_i':=A_i^\mathbf{'0}$.

 Fix $\l,s$. For any $r\in [\l]$ and any $0<x\leq M^{r+1}$ and any $s_1\geq s$ such that $s_1((2\alpha)^{C'}x)^{2/3}<s(M^{r+2})^{2/3}$,   let $E_{r+1,x,s_1}=\{\Gamma(x)-x\geq s_1x^{2/3}\}$. Define $E_{r+1,x,s_1}^{v}$ similarly with the geodesic $\Gamma$ replaced by $\Gamma_{v,n}$. Also let $0\leq z<x$ be such that $x-z\geq \beta x$ for some fixed positive constant $\beta$, and let $s_2$ be such that $s_2z^{2/3}<\frac{s_1x^{2/3}}{2}$.
We claim that for any such $x$ fixed, and any such $z<x$ fixed and any such $s_1,s_2$, and for $v=(z,z+s_2(z)^{2/3})$
\begin{equation}\label{e:ind1}
\P\left(E^v_{r+1,x,s_1}\bigcap_{i\in \llbracket r+2,\l\rrbracket} A^{'v}_{i}\right)\leq 2^{\l-r}e^{-cs_1-cs(\l-r-1)},
\end{equation}
where $c$ is some absolute constant.

We prove the statement \eqref{e:ind1} by induction on $\l-r$. Clearly this holds when $r=\l-1$ by simply applying Theorem \ref{t:carsestimate}. Assume \eqref{e:ind1} holds for $r=k+1$, we prove this for $r=k$. Fix $x\leq M^{k+1}$, $z<x$ and $s_1\geq s$ such that $x-z\geq \beta x$ and $s_1((2\alpha)^{C'}x)^{2/3}<s(M^{k+2})^{2/3}$. Parallel to the events in Theorem \ref{t:carsestimate}, for $v=(z,z+s_2z^{2/3})$, define $B^v_j$ as the event that $\Gamma_{v,n}(2^jx)-2^jx\geq s_1((2\alpha)^jx)^{2/3}$ and $\Gamma_{v,n}(2^{j+1}x)-2^{j+1}x\leq s_1((2\alpha)^{j+1}x)^{2/3}$ for all $j=0,1,2,\ldots,C'-1$. Then
\begin{eqnarray*}
B&:=&E^v_{k+1,x,s_1}\bigcap_{i\in \llbracket k+2,\l\rrbracket} A^{'v}_{i}\\
&\subseteq & \bigcup_{j=0}^{C'-1}\left(B^v_j\bigcap_{i\in \llbracket k+2,\l\rrbracket} A^{'v}_{i}\right)\bigcup \left(\left\{\Gamma_{v,n}(2^{C'}x)-2^{C'}x\geq s_1((2\alpha)^{C'}x)^{2/3}\right\}\bigcap_{i\in \llbracket k+3,\l\rrbracket} A^{'v}_{i}\right).
\end{eqnarray*}
Now let $U_j=\{(2^jx,y):y\geq 2^jx+s_1((2\alpha)^jx)^{2/3}\}$ and $V_j=\{(2^{j+1}x,y): y\leq 2^{j+1}x+s_1((2\alpha)^{j+1}x)^{2/3}\}$ for $j\leq C'-1$. Then, for each $j\leq C'-1$,
\begin{eqnarray*}
&& \left(B^v_j\bigcap_{i\in \llbracket k+2,\l\rrbracket} A^{'v}_{i}\right)\\
&\subseteq & \left\{\sup_{u\in U_j,w\in V_j}(T_{v,u}+T_{u,w}-T_{v,w})\geq 0\right\}\bigcap \left\{\bigcup_{w\in V_j}\left\{\bigcap_{i\in \llbracket k+2,\l\rrbracket}A_i^{'w}\right\}\right\}
\end{eqnarray*}
The two events in the intersection are independent. It follows from the proof of Theorem \ref{t:carsestimate} that
\[\P\left\{\sup_{u\in U_j,w\in V_j}(T_{v,u}+T_{u,w}-T_{v,w})\geq 0\right\}\leq e^{-cs_1\alpha^{2j/3}}.\]
Also for all $j\leq C'-1$, using the induction hypothesis \eqref{e:ind1} with $r=k+1$, $s_1'=s$, $x'=M^{r+2}$, and $v'=(2^{j+1}x,2^{j+1}x+s_1((2\alpha)^{j+1}x)^{2/3})$,
the topmost vertex of $V_j$, and using polymer ordering Lemma \ref{l:porder},
\[\P\left\{\bigcup_{w\in V_j}\left\{\bigcap_{i\in \llbracket k+2,\l\rrbracket}A_i^{'w}\right\}\right\}\leq \P\left(\bigcap_{i\in \llbracket k+2,\l\rrbracket}A_i^{'v'}\right)\leq 2^{\l-k-1}e^{-cs(\l-k-1)}.\]
Also the fact that  $\alpha^{C'}<2^{C-C'}$ and the condition on $s_1$ imply that $s_1''((2\alpha)^{C'}x'')^{2/3}<s(M^{j+3})^{2/3}$, where $x''=2^{C'}x$ and $s_1''=s_1\alpha^{2C'/3}$ and $\{\Gamma(2^{C'}x)-2^{C'}x\geq s_1((2\alpha)^{C'}x)^{2/3}\}=E_{k+2,x'',s_1''}$. Hence, applying statement \eqref{e:ind1} of the induction hypothesis again,
\begin{eqnarray*}
&&\P\left(\left\{\Gamma_{v,n}(2^{C'}x)-2^{C'}x\geq s_1((2\alpha)^{C'}x)^{2/3}\right\}\bigcap_{i\in \llbracket k+3,\l\rrbracket} A^{'v}_{i}\right)\\
&=&\P\left(E_{k+2,x'',s_1''}\bigcap_{i\in \llbracket k+3,\l\rrbracket} A^{'v}_{i}\right)\\
&\leq & 2^{l-k-1}e^{-cs_1''-cs(\l-k-2)}=2^{l-k-1}e^{-cs_1\alpha^{2C'/3}-cs(\l-k-2)}.
\end{eqnarray*}

Hence, bringing all this together,
\[\P(B)\leq \sum_{j=1}^{C'-1}2^{\l-k-1}e^{-cs(\l-k-1)}e^{-cs_1\alpha^{2j/3}}+2^{l-k-1}e^{-cs_1\alpha^{2C'/3}-cs(\l-k-2)}.\]
Since $\alpha^{2C'/3}\geq 2$, this proves that
\[\P(B)\leq 2^{\l-k}e^{-cs_1-cs(\l-k-1)}.\]
This proves statement \eqref{e:ind1} of the induction hypothesis for $j=k$ and completes the induction and proves the claim.

 Hence, with $r=0$, $x=M$ and $s_1=s$, $z=0$, we get $E_{j+1,x,s_1}=A'_1$, and from the above claim,
\[\P\left(\bigcap_{i=1}^{\l}A'_i\right)\leq 2^{\l}e^{-c\l s}.\]
hence, by taking a union bound over all $2^{\l}$ terms,
\[\P(|\Gamma(M^i)-M^i|\geq s(M^i)^{2/3} \mbox{ for all } i \in [\l])\leq 2^{2\l}e^{-c\l s}.\]
For all $s\geq s_0$, such that $\frac{c}{2}s>\log 4$, one has the result.
\end{proof}

Note that the arguments used in proving Theorem \ref{t:carsestimate} and Theorem \ref{t:carsbetter} would still go through if we considered the transversal fluctuation of the geodesic between two points such that
the line segment joining them has slope $m$ bounded away from $0$ and $\infty$. We state this without proof in the following corollary.

\begin{corollary}
Let $\psi>1$ and $m\in [\frac{1}{\psi},\psi]$ be fixed. Let $\Gamma$ be the geodesic from $(0,0)$ to $(n,mn)$. Let $\mathcal{S}$ be the line segment joining $(0,0)$ to $(n,mn)$. For $\l\in \Z$, let $\mathcal{S}(\l)$ be such that $(\l,\mathcal{S}(\l))\in \mathcal{S}$.
\begin{itemize}
\item[(a)] \label{carsestimate2}
 Then there exist positive constants $n_0, \l_0, s_0, c$ depending only on $\psi$, such that for all $n\geq n_0, s\geq s_0, \l\geq \l_0$,
\[P[|\Gamma(\l)-\mathcal{S}(\l)|\geq s\l^{2/3}] \leq e^{-cs}.\]
\item[(b)]\label{c:carsbetter2}
 For $k, M\in \Z$ fixed, and any vertex $v\in \Z^2$, let $A'_{i}$ denote the event
that $|\Gamma(M^{i}k)-\mathcal{S}(M^{i}k)| \geq s (M^{i}k)^{2/3}$. Then there exist positive constants $s_0, M_0, c,c'$ depending only on $\psi$, such that for all $s>s_0$, $M>M_0$ and $\ell,k\in \Z$ we have
$$\limsup_{n\to \infty} \P\left(\sum_{i=1}^{\ell} \ind_{A'_{i}}\geq 2e^{-cs}\ell\right) \leq e^{-c'\ell}.$$
\end{itemize}
\end{corollary}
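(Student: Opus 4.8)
The plan is to derive the Corollary directly from its diagonal counterparts (Theorem~\ref{t:carsestimate} and Theorem~\ref{t:carsbetter}/Proposition~\ref{t:allcars}) by the same change-of-slope observations that already appear in the proof of Theorem~\ref{t:carsestimate}, rather than redoing the argument from scratch. The key point is that every estimate used in Section~\ref{s:cars} --- the expectation asymptotics $\E(T_{u,u'})=d(u,u')+2\sqrt{r(y'-y)}+O(r^{1/3})$, the moderate deviation bounds of Theorem~\ref{t:moddevdiscrete}, and the supremum/infimum-over-paths estimates (Propositions 10.1 and 10.5 of \cite{BSS14}) --- are stated uniformly over slopes in a compact subinterval $(\tfrac1\psi,\psi)$ of $(0,\infty)$. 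So the only thing that genuinely changes when the endpoints are $(0,0)$ and $(n,mn)$ with $m\in[\tfrac1\psi,\psi]$ is bookkeeping: the reference line is $\mathcal{S}$ instead of the main diagonal, and the relevant local scale at horizontal coordinate $\ell$ is still $\ell^{2/3}$ (up to $m$-dependent constants, since $n\asymp mn$).

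For part (a), I would run the identical dyadic-scale decomposition: set $\alpha=2^{1/6}$, and for $j\ge 0$ let $B_j$ be the event that at horizontal coordinate $2^j\ell$ the path has already deviated from $\mathcal{S}$ by at least $s((2\alpha)^j\ell)^{2/3}$ but at $2^{j+1}\ell$ the deviation is at most $s((2\alpha)^{j+1}\ell)^{2/3}$. The boundary case $j$ at scale $n$ is handled exactly as before: $|\Gamma(n)-\mathcal{S}(n)|=0$ since both endpoints lie on $\mathcal{S}$, so $\{|\Gamma(\ell)-\mathcal{S}(\ell)|\ge s\ell^{2/3}\}\subseteq\bigcup_{j\ge0}B_j$. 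On $B_j$ there are vertices $u,w$ at horizontal coordinates $2^j\ell$ and $2^{j+1}\ell$ through which the geodesic is forced, and concatenating the segments $(0,0)\to u\to w$ against the direct geodesic to $w$ produces a length surplus whose expectation is at least $c(s\alpha^{2j/3})^2(2^j\ell)^{1/3}$ below zero; one checks that the line-segment-to-$\mathcal{S}$ comparison needed for this (the analogue of the $\mathcal{S}(2^j\ell)-2^j\ell\le\frac{\alpha^{2/3}}{2^{1/3}}(\cdots)$ inequality) holds with $m$-dependent constants because $\mathcal{S}$ has fixed slope $m$. Breaking the target intervals $U_t,V_{t'}$ into sub-intervals of length $(2^j\ell)^{2/3}$, taking the corresponding polynomially-many union bounds, and applying Propositions 10.1 and 10.5 of \cite{BSS14}, gives $\P(B_{j,t,t'})\le e^{-c(s+t+t')^2\alpha^{2j/3}}$, and summing over $t,t',j$ yields $\P[|\Gamma(\ell)-\mathcal{S}(\ell)|\ge s\ell^{2/3}]\le e^{-cs^2}\le e^{-cs}$.

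For part (b), the plan is to reprove the slope-$m$ analogue of Proposition~\ref{t:allcars} --- that $\P(|\Gamma(M^ik)-\mathcal{S}(M^ik)|\ge s(M^ik)^{2/3}\text{ for all }i\in F)\le e^{-cs|F|}$ --- by the same induction on $|F|$ (or $\ell-r$) as in the diagonal case. The structure is unchanged: at scale $M^{r+1}$ one approximates the bad event by one measurable with respect to the field on $\llbracket 0,D\rrbracket\times\Z$ for some intermediate $D$, using part (a) to argue that even conditionally it is very likely that $|\Gamma(D)-\mathcal{S}(D)|\le s'D^{2/3}$, which decouples the scales above $D$; the Polymer Ordering Lemma~\ref{l:porder} still applies verbatim since it makes no reference to slope. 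The only adjustment is that the constant $C'$ (hence $M_0$) must now be chosen depending on $\psi$ so that $\alpha^{2C'/3}\ge2$ and the scale-matching inequality $s_1''((2\alpha)^{C'}x'')^{2/3}<s(M^{j+3})^{2/3}$ survives the $m$-dependent distortions. Then Theorem~\ref{t:carsbetter}'s deduction goes through unchanged: stochastic domination by i.i.d.\ Bernoulli$(e^{-cs})$ variables plus Hoeffding's inequality.

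I do not expect a real obstacle here --- this is exactly the kind of routine transfer the authors flag by saying the argument "would still go through." The one place requiring a moment's care is verifying that all the \cite{BSS14} inputs are indeed uniform over slope: the expectation expansion, Corollary~9.3 and Lemmas~9.4--9.5 (as noted in the footnote in the proof of Theorem~\ref{t:carsestimate}, these hold for Exponential LPP with the same proof for slopes in $(\tfrac1\psi,\psi)$), and Propositions~10.1 and 10.5. Since these are already available uniformly on compact slope-intervals, the constants $n_0,\ell_0,s_0,c,c'$ simply acquire a dependence on $\psi$ and nothing else in the argument changes.
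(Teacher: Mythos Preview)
Your proposal is correct and matches the paper's approach exactly: the paper states this corollary without proof, noting only that ``the arguments used in proving Theorem \ref{t:carsestimate} and Theorem \ref{t:carsbetter} would still go through'' for slopes bounded away from $0$ and $\infty$, and your write-up is precisely the natural fleshing-out of that remark. The key observation you highlight --- that all the \cite{BSS14} inputs (expectation asymptotics, moderate deviations, and Propositions 10.1 and 10.5) are already stated uniformly over $(\tfrac1\psi,\psi)$ --- is exactly why the transfer is routine.
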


\section{Coalescence of Finite Geodesics}
\label{s:coales}
In this section we prove Theorem \ref{t:coal} following the strategy outlined in Section \ref{s:outline}. We prove Theorem \ref{t:finite1} modulo the key Proposition \ref{l:meetsubinter} stated below, and use it to complete the proof of Theorem \ref{t:coal}.


Recall the set-up of Theorem \ref{t:finite1}. Let $L$ be a large fixed number. For $r\in \N$ consider the points $a_{r}=(r,r+Lr^{2/3})$ and $b_{r}=(r,r-Lr^{2/3})$ (assume without loss of generality that $r^{2/3}$ is an integer). For some large $M\in \N$ let $\Gamma_1$ denote the geodesic from $a_{r}$ to $a_{Mr}$, and $\Gamma_2$ denote the geodesic from $b_r$ to $b_{Mr}$. Let ${\rm Coal}_{r,M}$ denote the event that $\Gamma_1$ and $\Gamma_2$ share a common vertex. We want to show that the probability of this event is bounded below by $1/2$, for some suitably large $M$, uniformly in all large $r$. We first show that the probability is bounded below by an arbitrary small constant independent of $r$ and $M$.

%


\begin{proposition}\label{l:meetsubinter}
 Fix $z,r\in \N$ and $0\leq u_0\leq \log \log z$ and $0\leq v_0 \leq \log \log z^{2}$. Set $a_1=(zr,zr+u_0z^{2/3}r^{2/3}), b_1=(zr,zr-u_0z^{2/3}\l^{2/3}), a_2=(z^2r,z^2r+v_0(z^2)^{2/3}r^{2/3}),b_2=(z^2r,z^2r-v_0(z^2)^{2/3}r^{2/3})$. Let $\Gamma_0$ be the geodesic from $a_1$ to $a_2$ and $\Gamma_0'$ be the geodesic from $b_1$ to $b_2$. Let $F$ be the event that $\Gamma_0$ and $\Gamma_0'$ meet one another. Then there exists an absolute positive constant $\alpha$ not depending on $z,r$, such that
\[\P(F)\geq \alpha>0.\]
\end{proposition}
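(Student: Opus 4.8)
The plan is to prove Proposition~\ref{l:meetsubinter} by constructing, on a favorable event, a ``barrier'' just above $\Gamma_0'$ (the lower geodesic) that is so costly to cross that $\Gamma_0'$ is forced to merge with $\Gamma_0$ before it can escape past the barrier — a strategy closely modeled on the barrier construction in \cite{BSS14}. Since the endpoints $a_1,b_1,a_2,b_2$ are vertically separated by only $O(z^{2/3}r^{2/3})$ and $O((z^2)^{2/3}r^{2/3})$ respectively (i.e., by the transversal fluctuation scale at the corresponding horizontal distances), the two geodesics ``want'' to be at the same place, so one only needs to win against a uniformly positive-probability bad event.

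\textbf{Step 1: Choose an intermediate scale and reduce to a single interval.} Pick a horizontal coordinate $D$ with $zr \ll D \ll z^2 r$ (e.g.\ $D$ a fixed power of $z$ times $r$), and focus on the vertical strip $\llbracket zr, D\rrbracket \times \Z$. By Theorem~\ref{t:carsestimate} (applied to each of $\Gamma_0,\Gamma_0'$, using that the endpoint heights are within $O(\ell^{2/3})$ of the diagonal), with probability close to $1$ both geodesics have transversal fluctuation at most $s_0 D^{2/3}$ throughout $\llbracket zr, D\rrbracket$ for a suitable constant $s_0$; in particular both $\Gamma_0(D)$ and $\Gamma_0'(D)$ lie in a window of height $O(D^{2/3})$ around $D$, with $\Gamma_0'(D) \le \Gamma_0(D)$ (polymer ordering, Lemma~\ref{l:porder}, up to the initial separation). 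It then suffices to force $\Gamma_0'$ to rise up to $\Gamma_0$ within this strip.

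\textbf{Step 2: Build the barrier.} Following \cite{BSS14}, I would take a rectangular region $R$ of width $\Theta(D)$ and height $\Theta(D^{2/3})$ positioned just above where $\Gamma_0'$ typically runs (and below where $\Gamma_0$ typically runs), and consider the event that the best path crossing $R$ is heavily penalized: concretely, that every up/right path through $R$ has weight deficit at least $c D^{1/3}$ relative to the typical value, while simultaneously the weights in the complementary region (in particular near $\Gamma_0$ and in a channel connecting $\Gamma_0'$ to $\Gamma_0$ that avoids $R$) behave typically. Each of these is a positive-probability event; using the FKG inequality (the ``barrier is costly'' event is decreasing in the weights inside $R$, the ``environment is good elsewhere'' event is increasing/measurable on a disjoint set of vertices), their intersection has probability bounded below by an absolute constant $\alpha_0 > 0$, uniformly in $z,r$ — this uniformity is exactly why the construction at scale $D$ (rather than at scale $zr$) is used, so that $D \to \infty$ and the moderate deviation estimates of Theorem~\ref{t:moddevdiscrete} kick in.

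\textbf{Step 3: Conclude coalescence.} On the intersection of the barrier event, the transversal-fluctuation event from Step~1, and the good-environment event, I claim $\Gamma_0'$ must coalesce with $\Gamma_0$ inside the strip. Indeed, if $\Gamma_0'$ did \emph{not} meet $\Gamma_0$ by horizontal coordinate $D$, then since $\Gamma_0'(D) < \Gamma_0(D)$ is impossible (it would have to pass below $\Gamma_0$ while both endpoints force it upward) — more precisely, comparing the weight of $\Gamma_0'$ with the weight of the competitor path that follows $\Gamma_0'$ until it nears $\Gamma_0$, then switches onto $\Gamma_0$ through the good channel, one sees that any path staying strictly below $\Gamma_0$ past the barrier must cross $R$ and hence loses $\gtrsim D^{1/3}$, whereas the switching competitor only loses $O(D^{1/3})$ with a smaller constant; for $D$ large this makes the non-coalescing path suboptimal, a contradiction. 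Hence $\Gamma_0'$ merges with $\Gamma_0$ and $F$ holds, giving $\P(F) \ge \alpha$ for some absolute $\alpha>0$.

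\textbf{Main obstacle.} The delicate point is the quantitative bookkeeping in Step~3: one must ensure the penalty for crossing the barrier ($\gtrsim c_1 D^{1/3}$) strictly exceeds the total cost of the ``detour'' by which the lower geodesic climbs to the upper one (a gain/loss of order $c_2 D^{1/3}$ from the changed endpoint geometry plus fluctuation slack), i.e.\ one needs $c_1$ genuinely larger than $c_2$, which forces a careful choice of the barrier's aspect ratio and of the constant $s_0$, and careful use of the expectation asymptotics $\E(T_{u,u'}) = d(u,u') + 2\sqrt{r(y'-y)} + O(r^{1/3})$ (the footnoted Exponential-LPP analogue of Section~9 of \cite{BSS14}) together with the supremum/infimum moderate deviation bounds (Propositions~10.1 and 10.5 of \cite{BSS14}) to control all the competing paths simultaneously. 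Getting these constants to line up, uniformly in $z$ and $r$, is where essentially all the work lies, and is deferred to Sections~\ref{s:events} and~\ref{s:const}.
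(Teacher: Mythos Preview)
Your high-level ingredients (barrier at an intermediate scale $D\asymp (zr)^{3/2}$, FKG, moderate deviations from \cite{BSS14}) match the paper, but the geometry of your construction is inverted in a way that breaks Step~3. You place the barrier $R$ ``just above $\Gamma_0'$ and below $\Gamma_0$'' and then argue that $\Gamma_0'$ is forced to climb up to $\Gamma_0$. But $\Gamma_0'$ is the \emph{lower} geodesic (from $b_1$ to $b_2$): by polymer ordering $\Gamma_0'(x)\le \Gamma_0(x)$, so $\Gamma_0'$ simply runs \emph{beneath} your barrier and never has to cross it --- nothing forces it upward. Your sentence ``$\Gamma_0'(D)<\Gamma_0(D)$ is impossible'' is the opposite of the truth; that inequality is exactly what polymer ordering gives. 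Likewise ``any path staying strictly below $\Gamma_0$ past the barrier must cross $R$'' is false: such a path can stay below $R$ entirely.

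The paper's construction goes the other way: one conditions on $\Gamma_0'=\gamma$ (a fixed typical path), and then builds a barrier $B$ that \emph{contains} $\gamma$ in its bottom portion and extends a height $(2M+S)x^{2/3}$ \emph{far above} the typical location of $\Gamma_0$. The barrier event $R_\gamma$ is that every path crossing $B$ while \emph{avoiding} $\gamma$ is penalized by $S^4 x^{1/3}$; a separate event $A_\gamma$ (proved via the Theorem~\ref{t:carsestimate} argument) rules out $\Gamma_0$ escaping over the top of $B$ or over a second wall $L_3$. Together these force $\Gamma_0$ \emph{down} onto $\gamma=\Gamma_0'$ (Lemma~\ref{intersectmeet}). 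The conditioning on $\{\Gamma_0'=\gamma\}$ is essential and introduces its own subtlety: since $R_\gamma$, $A_\gamma$, and $\{\Gamma_0'=\gamma\}$ are all decreasing in the configuration on $\cZ\setminus\gamma$, FKG is applied \emph{after} conditioning, and one first restricts to a set of good paths $\gamma\in J_1\cap J_2\cap J_3$ (via Markov's inequality) on which the conditional probabilities of $G$ and $A_\gamma$ remain bounded below. Your sketch is missing both the correct direction of forcing and this conditioning step.
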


Proof of Proposition \ref{l:meetsubinter} is rather elaborate and the next two sections are devoted to it. Roughly the idea is as follows. We show that at some horizontal length scale $D$ (where $r\ll D \ll Mr$) with a probability bounded away from zero there exists a barrier of width $O(D)$ and height $O(D^{2/3})$ just above the geodesic $\Gamma_{2}$, such that any path passing through the barrier is penalised a lot. Using Theorem \ref{t:carsestimate} and the FKG inequality we show that in presence of such a barrier and in environment that is typical otherwise, $\Gamma_{2}$ will merge with $\Gamma_{1}$ before crossing the barrier region. The construction of the barrier here is similar to one present in \cite{BSS14}, and we shall quote many of the probabilistic estimates in that paper throughout our proof. 

We  first show how we can conclude Theorem \ref{t:finite1} from Proposition \ref{l:meetsubinter}.


\begin{proof}[Proof of Theorem \ref{t:finite1}] Fix $r$. By translation invariance, it is enough to look at the geodesic from $e_0=(0,Lr^{2/3})$ to $e_1=((M-1)r,(M-1)r+L(Mr)^{2/3})$ and the geodesic from $g_0=(0,-Lr^{2/3})$ to $g_1=((M-1)r,(M-1)r-L(Mr)^{2/3})$. Let $\Gamma=\Gamma_{e_0,e_1}$ and $\Gamma'=\Gamma_{g_0,g_1}$. Set
\[p_i=2^{2^i}r, \mbox{ for } i=\frac{N}{2},\frac{N}{2}+1,\ldots,N,\]
such that $2^{2^N}=M-1$, i.e., $N=c\log \log (M-1)$ for some absolute constant $c$. Also set $a_i=(p_i,p_i+p_i^{2/3}i)$ and $b_i=(p_i,p_i-p_i^{2/3}i)$, for $i=\frac{N}{2},\frac{N}{2}+1,\ldots,N$. Applying Lemma \ref{carsestimate2}, it follows that $\Gamma$ and $\Gamma'$ pass between $a_i$ and $b_i$ for all $i$ with probability at least
\[1-2\sum_{i=\frac{N}{2}}^{N} e^{-c
\frac{i}{2}}\geq 1-C'e^{-c'N}.\]
From Proposition \ref{l:meetsubinter}, with $z=2^{2^{i}}$, it is easy to see that $\Gamma_{a_i,a_{i+1}}$ and $\Gamma_{b_i,b_{i+1}}$ meet with a positive probability $\alpha$ not depending on $i,r$. Hence, due to polymer ordering (Lemma \ref{l:porder}) and independence in each block, $\Gamma$ and $\Gamma'$ meet between $0$ and $(M-1)r$ with probability at least
\[1-(1-\alpha)^{N/2}-C'e^{-c'N} \geq 1-\frac{C'}{(\log M)^{c'}},\]
for some positive constants $C',c'$ that depend only on $L$ but not on $r$ or $M$. This proves the theorem by choosing $M$ sufficiently large.
\end{proof}

We now complete the proof of Theorem \ref{t:coal}. The proof is similar to the proof of Theorem \ref{t:finite1}, except that we use Theorem \ref{t:carsbetter} instead of Theorem \ref{t:carsestimate} and a union bound.


\begin{figure}[h]
\centering
\includegraphics[width=0.5\textwidth]{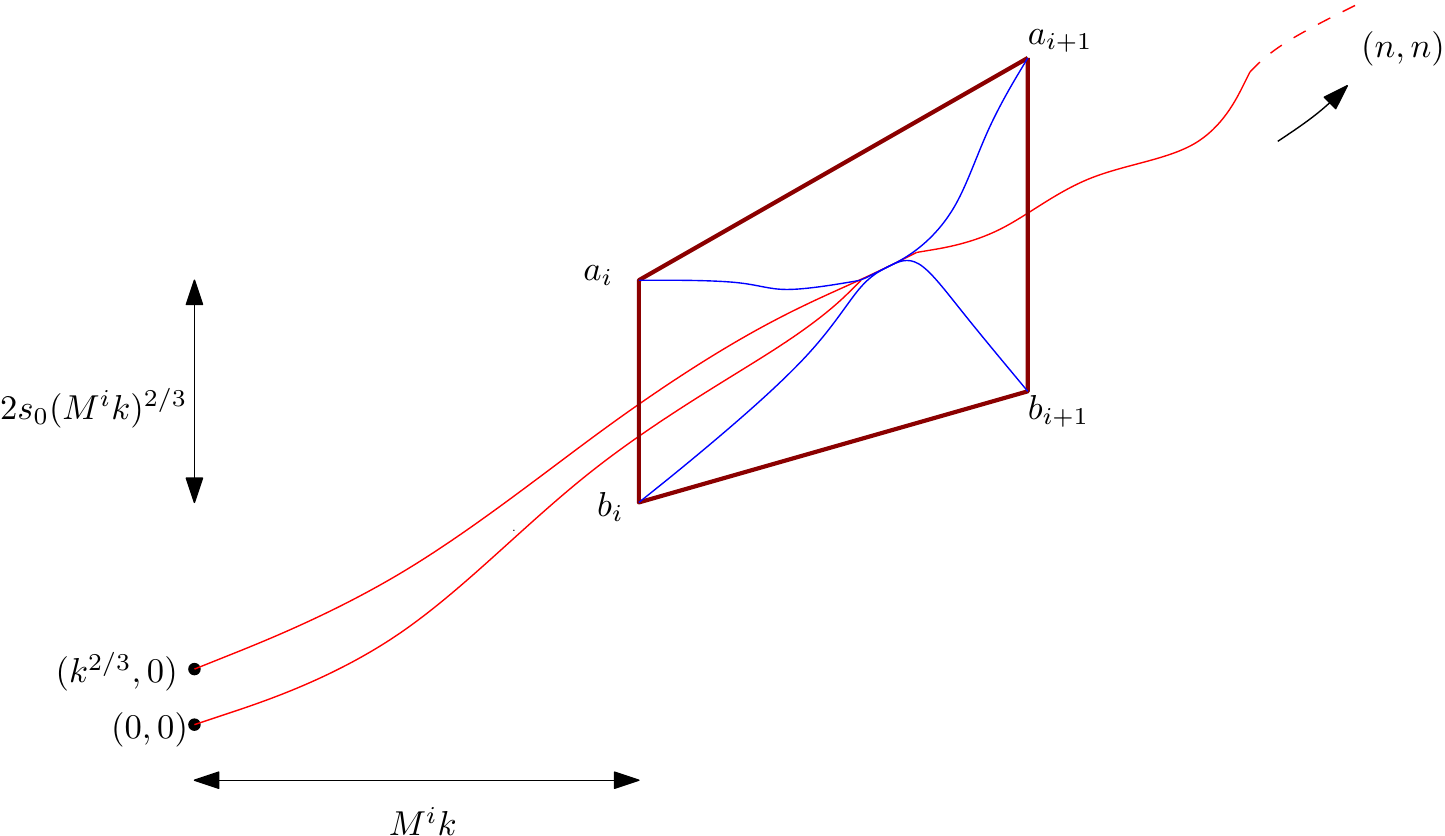}
\caption{Merging of paths as in the proof of Theorem \ref{t:coal}. Using Theorem \ref{t:carsbetter} it follows that the geodesics $\Gamma_{1}$ and $\Gamma_2$ are very likely to pass between the points $a_{i}$ and $b_{i}$ for all $i$ sufficiently large. Using Theorem \ref{t:finite1} we show that with a positive probability they merge in one of those intervals. Proof of Theorem \ref{t:finite1} is similar, except there we choose intervals growing doubly exponentially and the distance between the points $a_i$ and $b_i$ much larger. The argument proceeds by using Proposition \ref{l:meetsubinter} instead of Theorem \ref{t:finite1} and Theorem \ref{t:carsestimate} together with a union bound replacing Theorem \ref{t:carsbetter}.}
\label{f:merging}
\end{figure}

\begin{proof}[Proof of Theorem \ref{t:coal}]
Let $M>M_0,s_0$ as in Theorem \ref{t:carsbetter} and let $e^{-cs_{0}}=\epsilon< \frac{1}{8}$. Let $C_{i}$ denote the event that $|\Gamma_{v_1,\mathbf{n}}(M^{i}k)-M^{i}k|\leq s_0(M^ik)^{2/3}$ and $|\Gamma_{v_2,\mathbf{n}}(M^{i}k)-M^{i}k|\leq s_0(M^ik)^{2/3}$. Let $a_{i}=(M^{i}k, M^{i}k+s_0(M^ik)^{2/3})$ and $b_i=(M^{i}k, M^{i}k-s_0(M^ik)^{2/3})$. Let $D_i$ denote the event that the geodesic from $a_{i}$ to $a_{i+1}$ meets the geodesic from $b_i$ to $b_{i+1}$. By Theorem \ref{t:finite1} and choosing $r=M^ik$, it follows that for $M$ sufficiently large we have $\P(D_i)\geq 1/2$ for all $i$. Since $D_i$ are independent it follows that for all $\ell$
$$ \P\left(\sum_{i=1}^{\ell} \ind_{D_{i}}\leq \frac{\ell}{4}\right)\leq e^{-c'\ell}$$
for some $c'>0$. Using this together with Theorem \ref{t:carsbetter} we get

$$ \P\left(\sum_{i=1}^{\log R (\log M)^{-1}} \ind_{C_{i}\cap D_{i}}=0\right)\leq R^{-c},$$
for some absolute positive constant $c$. This proves Theorem \ref{t:coal}.
\end{proof}

The idea of the proofs of Theorem \ref{t:finite1} and Theorem \ref{t:coal} is illustrated in Figure \ref{f:merging}. It will be clear from our proof that Theorem \ref{t:finite1} works for geodesics in any fixed direction bounded away from the co-ordinate axes directions. This, together with Corollary \ref{c:carsbetter2} implies the following corollary, which we state without proof.

\begin{corollary}
\label{c:pathmeetslope}
Let $L,m_0$ be two fixed positive constants and let $n\gg k$.  Let $\Gamma$ be the geodesic from $(0,Lk^{2/3})$ to $(n,m_0n+Ln^{2/3})$ and $\Gamma'$ be the geodesic from $(0,-Lk^{2/3})$ to $(n,m_0n-Ln^{2/3})$ in the exponential LPP model. Let $v_*=(v_{*,1},v_{*,2})$ be a leftmost common point between $\Gamma$ and $\Gamma'$. Then,
\[\P(v_{*,1}>Rk)\leq CR^{-c},\]
for some positive constants $C,c$ that depend on $m_0,L$ but not on $k,n$.
\end{corollary}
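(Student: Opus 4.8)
The plan is to rerun the multi-scale coalescence scheme from the proof of Theorem~\ref{t:coal} essentially verbatim, with the main diagonal replaced by the straight-line segments joining the endpoints of $\Gamma$ and $\Gamma'$, and with the two probabilistic inputs replaced by their slanted analogues. Write $\mathcal S$ for the segment from $(0,Lk^{2/3})$ to $(n,m_0n+Ln^{2/3})$ and $\mathcal S'$ for the one from $(0,-Lk^{2/3})$ to $(n,m_0n-Ln^{2/3})$; for $n$ large (depending only on $m_0,L$) both have slope in $[\psi^{-1},\psi]$ for some $\psi=\psi(m_0)$, one has $0<\mathcal S(x)-\mathcal S'(x)\le C_0k^{2/3}$ for $0\le x\le Rk$ once $n$ is large relative to $R$ and $k$, and $\Gamma'(x)\le\Gamma(x)$ for all $x$ by polymer ordering (Lemma~\ref{l:porder}) applied to the endpoints. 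I will prove the bound in the $\limsup_{n\to\infty}$ sense (which is what $n\gg k$ means here) and may assume $k\ge1$, bounded $k$ being absorbed into the constants. The two slanted inputs are: the direction-$(1,m_0)$ version of Theorem~\ref{t:finite1} (valid for directions bounded away from the axes, as recorded after the proof of Theorem~\ref{t:coal}), and Corollary~\ref{c:carsbetter2}(b), the decorrelation-of-bad-fluctuations estimate for a geodesic around a slope-$m$ reference line, which I apply by translation invariance to $\Gamma$ with reference $\mathcal S$ and to $\Gamma'$ with reference $\mathcal S'$.

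Next I set up scales and events as in the proof of Theorem~\ref{t:coal}. Fix $M>M_0$ and $s_0$ large so that $\epsilon:=e^{-cs_0}<1/400$, with $c$ the constant of Corollary~\ref{c:carsbetter2}(b), and take $n$ large enough that the hypotheses of both slanted inputs hold at every scale $M^ik$, $i\ge1$. For $i\ge1$ put $a_i=(M^ik,\lceil\mathcal S(M^ik)\rceil+s_0(M^ik)^{2/3})$ and $b_i=(M^ik,\lfloor\mathcal S'(M^ik)\rfloor-s_0(M^ik)^{2/3})$; let $C_i$ be the event that $|\Gamma(M^ik)-\mathcal S(M^ik)|\le s_0(M^ik)^{2/3}$ and $|\Gamma'(M^ik)-\mathcal S'(M^ik)|\le s_0(M^ik)^{2/3}$ (on which both $\Gamma$ and $\Gamma'$ cross $x=M^ik$ strictly between $b_i$ and $a_i$), and let $D_i$ be the event that $\Gamma_{a_i,a_{i+1}}$ and $\Gamma_{b_i,b_{i+1}}$ share a vertex. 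The points $a_i,a_{i+1}$ (resp.\ $b_i,b_{i+1}$) lie within $O((M^ik)^{2/3})$ of a common line of slope $m_0+o(1)\in[\psi^{-1},\psi]$, and $a_i,b_i$ differ vertically by $O((M^ik)^{2/3})$, so the slanted Theorem~\ref{t:finite1} with $r=M^ik$ gives $\P(D_i)\ge1/2$ for $M$ large, uniformly in $i,k$; the $D_i$ are independent (each reads only the weights in $\llbracket M^ik,M^{i+1}k\rrbracket\times\Z$); and $C_i^c$ is contained in the union of the two bad-fluctuation events controlled by Corollary~\ref{c:carsbetter2}(b).

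The conclusion then follows from a Chernoff/union bound over $\ell=\lfloor\log R/\log M\rfloor$ scales plus a polymer-ordering sandwich. From independence and Hoeffding, $\P(\sum_{i\le\ell}\ind_{D_i}<\ell/4)\le e^{-c_1\ell}$; from Corollary~\ref{c:carsbetter2}(b) applied to $\Gamma$ and to $\Gamma'$ together with $\epsilon<1/400$, $\limsup_{n\to\infty}\P(\#\{i\le\ell:C_i^c\text{ holds}\}\ge\ell/100)\le2e^{-c_2\ell}$. Off these two events there is an index $i\le\ell-1$ with $C_i,C_{i+1},D_i$ all holding (more than $\ell/4$ indices carry $D_i$, at most $2\cdot\ell/100<\ell/4$ can have $C_j^c$ for some $j\in\{i,i+1\}$, and one drops $i=\ell$). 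For such $i$: on $D_i$ the sub-geodesics $\Gamma_{a_i,a_{i+1}}$ and $\Gamma_{b_i,b_{i+1}}$ meet at some $(x^*,y^*)$ with $x^*\le M^{i+1}k$; on $C_i\cap C_{i+1}$ the portion of $\Gamma$ (resp.\ $\Gamma'$) on $\llbracket M^ik,M^{i+1}k\rrbracket$ has endpoints lying componentwise between those of $\Gamma_{b_i,b_{i+1}}$ and $\Gamma_{a_i,a_{i+1}}$, so Lemma~\ref{l:porder} (and its reflection through a diagonal) sandwiches both $\Gamma$ and $\Gamma'$ between these two sub-geodesics on the block; consequently $\Gamma$ and $\Gamma'$ both pass through every common vertex of $\Gamma_{b_i,b_{i+1}}$ and $\Gamma_{a_i,a_{i+1}}$, in particular through $(x^*,y^*)$, whence $v_{*,1}\le M^{i+1}k\le M^\ell k\le Rk$. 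Therefore $\limsup_{n\to\infty}\P(v_{*,1}>Rk)\le e^{-c_1\ell}+2e^{-c_2\ell}\le CR^{-c}$ with $c=\min(c_1,c_2)/\log M$, after enlarging $C$ to absorb small $R$.

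I expect the only real work to be the uniformity bookkeeping behind the ``clear from our proof'' remark: one must ensure that $\psi,M_0,s_0,n_0$ and the final exponent depend only on $m_0,L$ and are uniform over the scale index $i\ge1$, over $k\ge1$, and along $n\to\infty$. Concretely, the slightly $n$-dependent, non-diagonal segments $\mathcal S,\mathcal S'$ and the lines through $\{a_i\}$ and through $\{b_i\}$ must have slopes bounded away from $0$ and $\infty$ uniformly for large $n$, so that Corollary~\ref{c:carsbetter2} and the direction-$(1,m_0)$ form of Theorem~\ref{t:finite1} apply with the stated uniformity; and one should check the (routine but slightly fiddly) sandwich step that turns a shared vertex of the two block sub-geodesics into a shared vertex of $\Gamma$ and $\Gamma'$, using the top- and bottom-profile versions of polymer ordering.
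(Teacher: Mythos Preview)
Your proposal is correct and follows exactly the approach the paper indicates. The paper states Corollary~\ref{c:pathmeetslope} without proof, remarking only that ``It will be clear from our proof that Theorem~\ref{t:finite1} works for geodesics in any fixed direction bounded away from the co-ordinate axes directions. This, together with Corollary~\ref{c:carsbetter2} implies the following corollary''; your write-up is precisely the execution of that sentence, rerunning the multi-scale scheme of Theorem~\ref{t:coal} with the slanted inputs and the polymer-ordering sandwich, and your closing paragraph correctly identifies the only nontrivial bookkeeping (uniformity of constants in $m_0,L$ across scales and along $n\to\infty$).
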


\textbf{To complete the proof of Theorem \ref{t:coal}, it only remains to prove Proposition \ref{l:meetsubinter}. This is done over the next two sections. Observe that by scaling of the process, it suffices to prove Proposition \ref{l:meetsubinter} for $r=1$. For the next two sections we shall always be in this setting even though we might not explicitly mention it every time.}

\section{Favourable events}
\label{s:events}
First we need to define a set of events that will be key to our proof. Some of these are similar to the events in Section $3$ of \cite{BSS14}. We need to introduce some more notations before we can define these events. 

\subsection*{More notations}
Fix $z\in\N$ and consider the set up as in Proposition \ref{l:meetsubinter} with $r=1$. Define $x=z^{3/2}$. In this section, all the events are constructed for this fixed $z$. As there is no scope for confusion, we suppress the dependence of the events on $z$. Recall that $\Gamma_0$ is the geodesic from $a_1$ to $a_2$ and $\Gamma_0'$ be the geodesic from $b_1$ to $b_2$. 

Let $\mathcal{P}(w,\l,h,s)$ denote the parallelogram whose leftmost endpoint is $(w,w-h)$, and two sides are parallel to the diagonal and $y$-axis and are of length $\sqrt{2}\l$ and $h+s$ respectively, i.e., whose endpoints are $(w,w-h),(w+\l,w+\l-h),(w,w+s),(w+\l,w+\l+s)$. Construct the barrier $B$ at $x$ of width $x/10$ and height $(4M+S)x^{2/3}$ as follows,
\[B=\mathcal{P}(x,x/10,2Mx^{2/3},(2M+S)x^{2/3}).\]
 Let us denote the left wall of the barrier as $L_1$ and the right wall $L_2$.
Also let
\[\mathcal{Z}=\{(u,v) \in \R^2: x\leq u\leq x+\frac{x}{10}\}\]
be the region bounded by the vertical lines at $x$ and $x+\frac{x}{10}$. Define $x'=(2+\frac{1}{10})x$ and let $L_3$ denote the line segment joining $(x',x'-2M(x')^{2/3})$ and $(x',x'+(2M+S)(x')^{2/3})$. See Figure \ref{f:bf} for an illustration of the above definitions.

\subsection*{Choice of Parameters}
The construction of the favourable events will depend on a number of parameters. In the definitions that follow $H,M,S$ will denote large positive constants to be chosen appropriately later (not depending on $z$). The dependence among these constants are as follows.
\begin{itemize}
\item[1.] $H$ will denote a large absolute constant.
\item[2.] $M$ will denote a large absolute constant.\footnote{Note that the parameter $M$ here and in subsequent sections is in no way related to the constant $M$ used in the earlier Sections \ref{s:cars} and \ref{s:coales}.}
\item[3.] $S$ is chosen sufficiently large depending on $M$ and $H$.
\end{itemize}

\begin{figure}[htb!]
\centering
\includegraphics[width=0.3\textwidth]{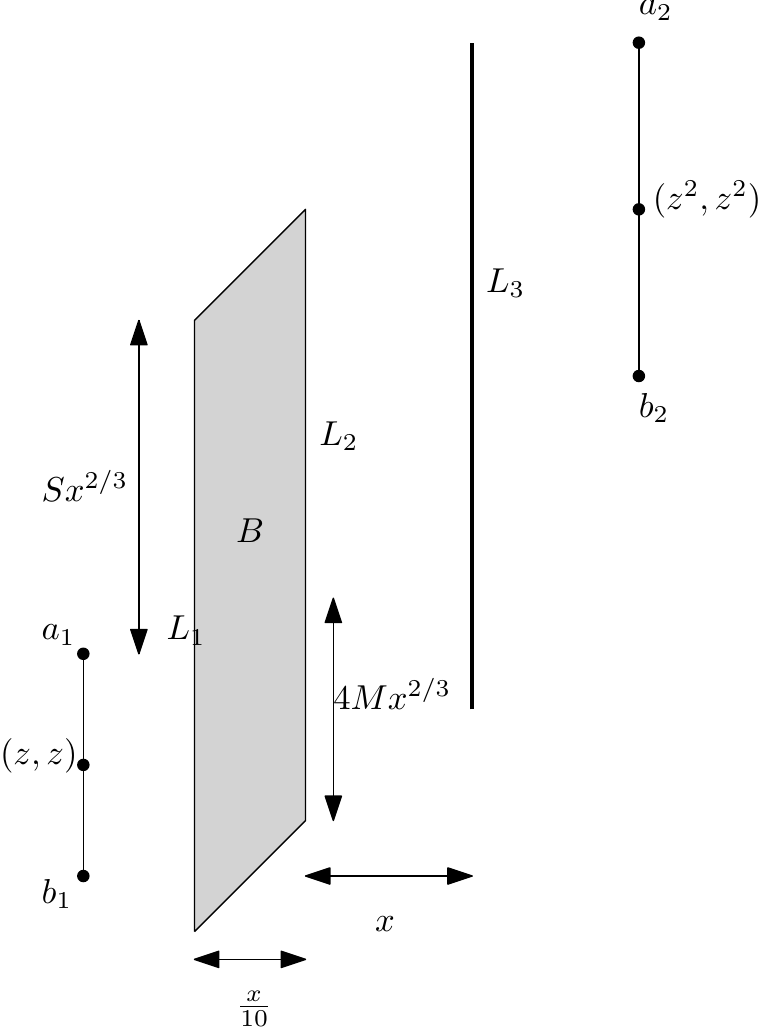}
\caption{The basic elements of our construction, the barrier $B$, and the line segments $L_1,L_2$ and $L_3$. Notice that $x=z^{3/2}$ so the barrier is much closer to the left boundary than to the right one.}
\label{f:bf}
\end{figure}



With this preparation we can now define the favourable events, which are divided into four types. The first three are typical in the sense that they hold with probability close to 1 (for the appropriate choice of parameters) but the final one only occurs with probability bounded away from 0.

\subsection{Wing condition}

Fix some large absolute constant $H$ to be chosen appropriately later. We say $G$ holds if the following two conditions hold:

\begin{enumerate}
\item[(i)] For all $u\in L_1$,  the left wall of the barrier,
\[|\widetilde{T}_{a_1,u}|\leq H\sqrt{S}x^{1/3}.\]
\item[(ii)] For all $u'\in L_2$, and $v\in L_3$,
\[|\widetilde{T}_{u',v}|\leq H\sqrt{S}x^{1/3}.\]
\end{enumerate}

The point of this condition is to ensure that the passage times to the left and the right of the barrier region behave typically. It follows from Lemma $7.3$ in \cite{BSS14} that the event $G$ holds with high probability, i.e., by choosing $H$ a large absolute constant,
\[\P(G)\geq \frac{99}{100}.\]
Observe that $G$ depends only on the configuration in $\cZ^c$.

\subsection{Typical path}
 Let $m=\frac{11}{10}$, so that the barrier ends at the vertical line $y=mx$, and recall $x'=(m+1)x$. Let $\gamma$ be an increasing path from $b_1$ to $b_2$. Define,
\[\widetilde{\l(\gamma[x,mx])}:=\l(\gamma[x,mx])-\E(T_{(x,\gamma(x)),(mx,\gamma(mx))}),\]
and
\[\widehat{\l(\gamma[x,mx])}:=\l(\gamma[x,mx])-2d((x,\gamma(x)),(mx,\gamma(mx))),\]
where $\gamma[x,mx]$ is the part of $\gamma$ between $x$ and $mx$ and $\l(\gamma)$ is the weight of $\gamma$. We say $\gamma$ is \textbf{typical} at location $x$, if the weight of $\gamma[x,mx]$ behave typically, and a series of geometric conditions hold ensuring $\gamma$ passes through the bottom part of $B$ and fluctuates at the typical transversal fluctuation scale of geodesics in the region between $L_1$ and $L_3$. More concretely, we ask for the following conditions:  

\textbf{Weight conditions:}
\begin{equation}\label{e:ltilde}
\left|\widetilde{\l(\gamma[x,mx])}\right|\leq H\sqrt{M}x^{1/3}.
\end{equation}
\begin{equation}\label{e:lhat}
\left|\widehat{\l(\gamma[x,mx])}\right|\leq H\sqrt{M}x^{1/3}.
\end{equation}

\textbf{Geometric Conditions:}
\begin{equation}\label{gammadeviation}
x+Mx^{2/3}\geq  \gamma(x) \geq x-Mx^{2/3},
\end{equation}
\begin{equation}\label{gammadeviation2}
mx+M(mx)^{2/3}\geq  \gamma(mx) \geq mx-M(mx)^{2/3},
\end{equation}
\begin{equation}\label{gammadeviation3}
x'+M(x')^{2/3}\geq  \gamma(x') \geq x'-M(x')^{2/3}.
\end{equation}

\begin{equation}\label{staysinbarrier}
\{(t,\gamma(t)):x\leq t\leq mx\}\subseteq \mathcal{P}(x,x/10,2Mx^{2/3},2Mx^{2/3})=:B_0.
\end{equation}

Note that conditions \eqref{e:ltilde} and \eqref{e:lhat} involve $\Pi_{\gamma[x,mx]}$, the configuration on $\gamma[x,mx]$, and the rest are conditions on the geometric properties of the path $\gamma$. See Figure \ref{f:typical} for an illustration.
We shall show later that geodesics are typical with high probability.

\begin{figure}[htb!]
\centering
\includegraphics[width=0.3\textwidth]{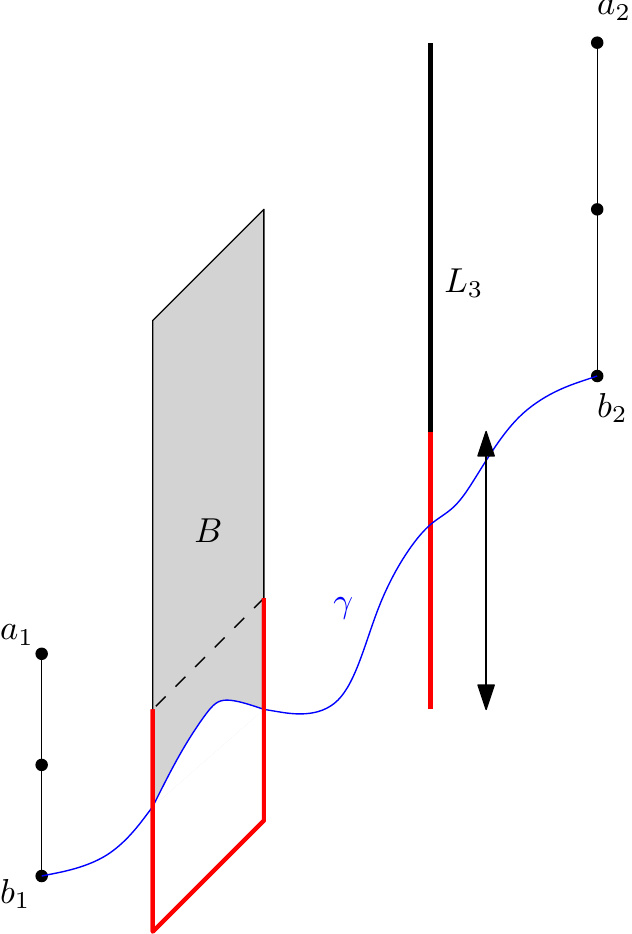}
\caption{Typical paths and barrier condition: a path $\gamma$ from $b_1$ to $b_2$ is typical if it has typical transversal fluctuations while crossing the barrier region and whose length restricted to the barrier region is typical. The barrier condition $R_{\gamma}$ asserts that the region in the barrier above $\gamma$ is really bad in the sense that any path crossing the barrier from left to right above $\gamma$ and is disjoint with $\gamma$  is much smaller than typical length.}
\label{f:typical}
\end{figure}

\subsection{Path condition}
Fix any increasing path $\gamma$ from $b_1$ to $b_2$. Our next favourable event asks that paths from $a_1$ to $a_2$ that have atypical transversal fluctuations in order to avoid crossing the barrier region will be not competitive with paths that simply cross the barrier region coinciding with $\gamma$. 

Let $T_{\gamma,x,mx}$ denote the weight of the best path from $a_1$ to $a_2$ that coincides with $\gamma[x,mx]$ between $x$ and $mx$. Let $F^1_{\gamma}$ be the weight of the best path from $a_1$ to $a_2$ that is more than a distance of $(2M+S)x^{2/3}$ above the diagonal at $x$,(i.e., at $x$ is above the left boundary of $B$) and stays above $\gamma[x,mx]$ in $[x,mx]$. Let
\[A^1_\gamma=\{F^1_{\gamma}<T_{\gamma,x,mx}-\sqrt{S}x^{1/3}\}.\]
Similarly, let $F^2_{\gamma}$ be the weight of the best path from $a_1$ to $a_2$ that is more than a distance of $(2M+S)(mx)^{2/3}$ above the diagonal at $mx$, (i.e., passes above the right boundary of $B$) and stays above $\gamma[x,mx]$ in $[x,mx]$, and define
\[A^2_\gamma=\{F^2_{\gamma}<T_{\gamma,x,mx}-\sqrt{S}x^{1/3}\}.\]
Also let $F^3_{\gamma}$ be the weight of the best path from $a_1$ to $a_2$ that is more than a distance of $(2M+S)(x')^{2/3}$ above the diagonal at $x'$, (i.e., passes above $L_3$) and stays above $\gamma[x,mx]$ between $[x,mx]$, and define
\[A^3_\gamma=\{F^3_{\gamma}<T_{\gamma,x,mx}-\sqrt{S}x^{1/3}\}.
\]
Define
\[A_\gamma=A^1_\gamma\cap A^2_\gamma\cap A^3_\gamma.\]

Observe that the event $A_\gamma$ is decreasing on the configuration of $\cZ\setminus \{\gamma\}$ conditioned on the remaining configuration. See Figure \ref{f:path}.

\begin{figure}[htb!]
\centering
\includegraphics[width=0.3\textwidth]{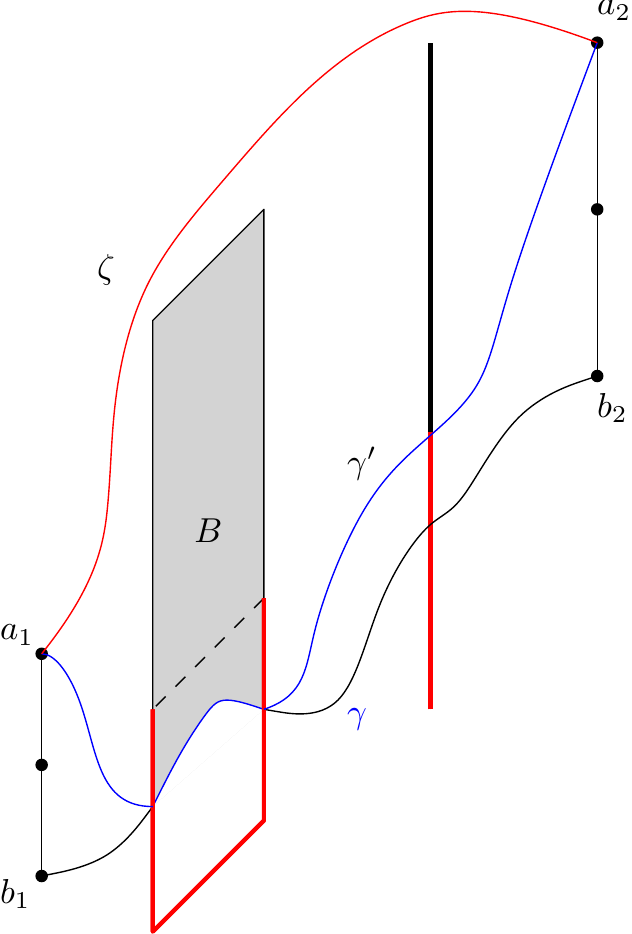}
\caption{Path condition $A_{\gamma}$: it asserts that a path $\zeta$ which passes above either wall of the barrier or the line segment $L_3$ must be much smaller than the path $\gamma'$ which is the longest path from $a_1$ to $a_2$ that agrees with $\gamma$ along the barrier.}
\label{f:path}
\end{figure}

Recall that $\Gamma_0'$ is the geodesic from $b_1$ to $b_2$. Define $A_{\Gamma_0'}$ similarly with $\gamma$ replaced by $\Gamma_0'$. We shall show later that $A_{\Gamma_0'}$ is a high probability event.

\subsection{Barrier condition}
So far all the events that we have described are events that typically hold. Our final favourable event is one that ensures any path crossing the barrier disjointly with $\Gamma'_0$ will be penalised a lot. This is not a typical event but one that only holds with constant probability (independent of $z$).
Fix an increasing path $\gamma$ from $b_1$ to $b_2$ satisfying the geometric conditions \eqref{gammadeviation}, \eqref{gammadeviation2} and \eqref{staysinbarrier}. We say the barrier condition $R_{\gamma}$ holds if:

Any path from the left to the right wall of the barrier $B$ that avoids $\gamma$ and crosses the barrier is much shorter. That is, for all $u\in L_1$ and $u'\in L_2$, such that $(x,\gamma(x))\preceq u$ and $(mx,\gamma(mx))\preceq u'$,
\[\widetilde{T}_{u,u'}^{\gamma} \leq -S^4x^{1/3}.\]
It follows from Lemma $8.3$ of \cite{BSS14}, that there exists a constant $\beta>0$ not depending on $x$ and $\gamma$, (depending on $M,S$), such that
\[\P(R_{\gamma})\geq \beta>0.\]
Observe that $R_{\gamma}$ depends only on the configuration in $\cZ\setminus \{\gamma\}$ and is decreasing in $\cZ\setminus\{\gamma\}$.

\section{Forcing Geodesics to Merge Using Favourable Events}
\label{s:const}

In order to show that a collection of geodesics coalesce, we shall need the following lemmas about the events defined in the previous subsection. Recall that $z\in \N$ is fixed, the set up is as given in Lemma \ref{l:meetsubinter} with $r=1$ and $x=z^{3/2}$. Now we consider the events described in the previous section. Then we have the following lemmas. The first lemma says that geodesics are \emph{typical} with high probability.

\begin{lemma}\label{typicalpath}Let $\Gamma_0'$ be the geodesic from $b_1$ to $b_2$. Then $\Gamma_0'$ is \textbf{typical} with probability at least $\frac{99}{100}$.
\end{lemma}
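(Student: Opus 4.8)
The plan is to prove that the geodesic $\Gamma_0'$ from $b_1$ to $b_2$ satisfies each of the conditions defining a \textbf{typical} path (the weight conditions \eqref{e:ltilde}, \eqref{e:lhat} and the geometric conditions \eqref{gammadeviation}--\eqref{staysinbarrier}), each with probability very close to $1$, and then combine them by a union bound so that all hold simultaneously with probability at least $\frac{99}{100}$. Since the number of conditions is bounded and the constants $H, M, S$ are free, it suffices to show each condition fails with probability at most a small constant, which can be made arbitrarily small by taking $H$ and $M$ large.

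First I would handle the geometric conditions \eqref{gammadeviation}--\eqref{gammadeviation3}, which assert that $\Gamma_0'(x), \Gamma_0'(mx), \Gamma_0'(x')$ lie within $Mx^{2/3}$ (resp.\ $M(mx)^{2/3}$, $M(x')^{2/3}$) of the diagonal. Recall $b_1 = (z, z - u_0 z^{2/3})$ and $b_2 = (z^2, z^2 - v_0 z^{4/3})$ with $u_0 \le \log\log z$, $v_0 \le \log\log z^2$, and $x = z^{3/2}$, so $x$ is at horizontal distance $\asymp z^{3/2}$ from both $b_1$ (distance $\asymp z^{3/2}$) and $b_2$ (distance $\asymp z^2$); thus $x$ sits at a macroscopic fraction of the way along in the appropriate scale, and the slope of the segment $b_1 b_2$ is within $o(1)$ of $1$. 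Applying Theorem \ref{t:carsestimate} (more precisely its slope-$m$ version, Corollary \ref{carsestimate2}, after recentering so that the relevant endpoint is the origin and rescaling), the probability that $\Gamma_0'$ deviates by more than $s\ell^{2/3}$ at length scale $\ell$ is at most $e^{-cs^2}$ (or $e^{-cs}$ for the general-slope version); taking $s = M$ large makes each of \eqref{gammadeviation}, \eqref{gammadeviation2}, \eqref{gammadeviation3} fail with probability at most $e^{-cM}$. A small subtlety is that the shift $u_0 z^{2/3}, v_0 z^{4/3}$ of the endpoints off the diagonal is of order $(\log\log)$ times the fluctuation scale, so one must either absorb this into $M$ (not allowed since $M$ is absolute while $\log\log z \to \infty$) or, better, note that these scales are $o(x^{2/3})$ relative to the distances involved so that the \emph{centered} geodesic still deviates by $O(x^{2/3})$ — this requires checking that $u_0 z^{2/3} \ll x^{2/3} = z$ and $v_0 z^{4/3} \ll (z^2)^{2/3} = z^{4/3}$, which fails for the latter; so instead I would use that the relevant endpoint-to-$x$ distances scale correctly and apply the car estimate on the subsegments directly. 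Once \eqref{gammadeviation} and \eqref{gammadeviation2} hold, the barrier-containment condition \eqref{staysinbarrier} (that $\Gamma_0'$ restricted to $[x, mx]$ stays in the parallelogram $B_0$) follows from another application of the local transversal fluctuation estimate on the subsegment $[x,mx]$, whose length $mx - x = x/10$ is comparable to $x$, again failing with probability $\le e^{-cM}$ after a union bound over $O(1)$ dyadic scales inside $[x,mx]$.

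Next I would handle the weight conditions. For \eqref{e:ltilde}, note $\widetilde{\l(\Gamma_0'[x,mx])} = \l(\Gamma_0'[x,mx]) - \E(T_{(x,\Gamma_0'(x)),(mx,\Gamma_0'(mx))})$. Conditioned on the endpoints $(x, \Gamma_0'(x))$ and $(mx, \Gamma_0'(mx))$ — which by \eqref{gammadeviation}, \eqref{gammadeviation2} lie within $Mx^{2/3}$ of the diagonal — the segment $\Gamma_0'[x,mx]$ is precisely the geodesic between these two points (by the defining property of geodesic restriction), so $\l(\Gamma_0'[x,mx]) = T_{(x,\Gamma_0'(x)),(mx,\Gamma_0'(mx))}$. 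Then Theorem \ref{t:moddevdiscrete} (the one-point moderate deviation bound), applied uniformly over the $O(M^2)$ possible endpoint pairs via a union bound and using that the slope stays in a compact subinterval of $(0,\infty)$, gives $\P(|\widetilde T| > H\sqrt{M} x^{1/3}) \le O(M^2) e^{-cH\sqrt M}$, which is small for $H$ large relative to $M$. The $\widehat{}$ version \eqref{e:lhat} follows because, as noted in the paper's notation section and Section 9 of \cite{BSS14}, when the slope is within $O(x^{-1/3})$ of $1$ one has $\E(T_{u,u'}) = 2d(u,u') + O(x^{1/3})$, so $\widehat{\l} = \widetilde{\l} + O(x^{1/3})$ and the bound transfers (adjusting $H$). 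The main obstacle I anticipate is bookkeeping the various length scales carefully: one must verify that $x = z^{3/2}$ is genuinely an interior point at the correct scale relative to both $b_1$ and $b_2$, that the endpoint offsets $u_0, v_0$ (which grow like $\log\log z$) do not spoil the slope-near-$1$ hypothesis needed for the moderate deviation and $\E$-asymptotics, and that the local car estimate (Theorem \ref{t:carsestimate}, stated for geodesics to points at distance $\le L\ell^{2/3}$ off-diagonal) applies on each relevant subsegment — the offsets $u_0 z^{2/3}$ are indeed $\le (\log\log z)\cdot z^{2/3} = o(z) = o(x^{2/3})$ at the $x$-scale but one must re-examine the $b_2$ side, where the offset is $v_0 z^{4/3}$ against distance-from-$x$ of order $z^2$, giving a relative transversal displacement of order $v_0 z^{4/3}/(z^2)^{2/3} = v_0 = o((z^2)^{2/3}/\ell^{2/3})$ only for $\ell$ comparable to $z^2$, so the hypothesis of the car estimate with a fixed $L$ must be traded for the version allowing $L$-dependence, or the statement applied on the segment from $b_2$ backward where the endpoint offset is within $L'\ell^{2/3}$ for the $\ell$ in question. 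Handling these scale verifications cleanly, rather than any deep probabilistic input, is where the real care goes; once set up, the union bound over the $O(1)$ conditions and $O(M^2)$ endpoint choices closes the argument.
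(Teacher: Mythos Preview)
Your overall strategy matches the paper's: verify each of the six conditions individually with high probability and union-bound. The geometric conditions \eqref{gammadeviation}--\eqref{gammadeviation3} and \eqref{staysinbarrier} are handled essentially as you describe (the paper uses polymer ordering for \eqref{staysinbarrier}, but your approach also works). However, your treatment of the two weight conditions has real gaps.

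For \eqref{e:ltilde}: you correctly observe that $\ell(\Gamma_0'[x,mx])=T_{(x,\Gamma_0'(x)),(mx,\Gamma_0'(mx))}$, but your ``union bound over $O(M^2)$ endpoint pairs'' using the one-point estimate Theorem~\ref{t:moddevdiscrete} does not work. The intervals $S_1,S_2$ have length $\sim Mx^{2/3}$, so contain $\sim Mx^{2/3}$ lattice points each; a union bound over $\sim M^2x^{4/3}$ pairs gives a factor of $x^{4/3}$ that cannot be absorbed since $H,M$ are absolute constants independent of $x$. The paper instead invokes the \emph{supremum/infimum} moderate deviation estimates (Corollaries~10.4 and~10.7 of \cite{BSS14}), which bound $\sup_{u\in S_1,u'\in S_2}|\widetilde T_{u,u'}|$ directly. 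This is the right tool; the one-point bound is not.

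For \eqref{e:lhat}: this is the more serious gap. From \eqref{gammadeviation} and \eqref{gammadeviation2} alone, the slope of the segment from $(x,\Gamma_0'(x))$ to $(mx,\Gamma_0'(mx))$ is only $1+O(Mx^{-1/3})$, not $1+O(x^{-1/3})$ as you claim. This gives $|\E T - 2d(\cdot,\cdot)|=(\sqrt{\Delta x}-\sqrt{\Delta y})^2=O(M^2 x^{1/3})$, which \emph{exceeds} the target bound $H\sqrt{M}x^{1/3}$ for large $M$ (recall $H$ is a fixed absolute constant, not adjustable in terms of $M$). The paper closes this by introducing a further event $A'$ asserting the finer control $|(\Gamma_0'(mx)-mx)-(\Gamma_0'(x)-x)|\le \tfrac{\sqrt{H}}{2}M^{1/6}x^{2/3}$, proved by subdividing $S_1$ into intervals of length $\sim M^{1/6}x^{2/3}$, applying Corollary~\ref{carsestimate2} to geodesics from the subdivision points, and using polymer ordering. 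Under $A'$ one gets $|\E T-2d|\le CHM^{1/3}x^{1/3}\le \tfrac{H}{2}\sqrt{M}x^{1/3}$ for $M$ large, and then \eqref{e:lhat} follows from \eqref{e:ltilde}. Without this extra step your argument for \eqref{e:lhat} does not close.
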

\begin{proof}
We first show that conditions \eqref{gammadeviation}, \eqref{gammadeviation2} and \eqref{gammadeviation3} each occur with probability at least $\frac{999}{1000}$. Enough to show one of them, the others are similar. Let $\mathcal{S}$ be the line segment joining the two points $b_1$ and $b_2$. Then at the point $(x,x)$ on the diagonal, $x\geq \mathcal{S}(x)\geq x-\frac{1}{4}x^{2/3}$ (where $\mathcal{S}(x)$ is such that $(x,\mathcal{S}(x))\in \mathcal{S}$). Hence, by Corollary \ref{carsestimate2},  \eqref{gammadeviation} occurs with probability at least $\frac{999}{1000}$ by choosing $M$ large.

Next we show that conditions \eqref{e:ltilde} and \eqref{e:lhat} hold with high probability. Let $S_1$ be the line segment joining $(x,x-Mx^{2/3})$ and $(x,x+Mx^{2/3})$, and $S_2$ be the line segment joining $(mx,mx-M(mx)^{2/3})$ and $(mx,mx+M(mx)^{2/3})$. Let $A$ denote the event that
\[\sup_{u\in S_1,u'\in S_2}|\widetilde{T}_{u,u'}|\leq \frac{H}{2}\sqrt{M}x^{1/3}.\]
That $\P(A)\geq \frac{999}{1000}$ follows from Corollary $10.4$ and Corollary $10.7$ of \cite{BSS14} and hence condition \eqref{e:ltilde} holds with high probability. Let $A'$ be the event that $|\Gamma_0'(mx)-\Gamma_0'(x)|\leq \frac{\sqrt{H}}{2}M^{1/6}x^{2/3}$. Breaking $S_1$ into subintervals of length $\frac{\sqrt{H}}{2}M^{1/6}x^{2/3}$, using Corollary \ref{carsestimate2} for the geodesics starting from each of the endpoints of the subintervals to $b_2$, and polymer ordering (Lemma \ref{l:porder}) and union bound gives that $\P(A')\geq \frac{999}{1000}$. Observe that for any geodesic $\gamma$ from $u$ to $u'$ with $u\in S_1,u'\in S_2$ , such that $|\widetilde{\l(\gamma})|\leq \frac{H\sqrt{M}x^{1/3}}{2}$ and $|\gamma(u')-\gamma(u)|\leq \frac{\sqrt{H}}{2}M^{1/6}x^{2/3}$, one has $|\widehat{\l(\gamma})|\leq H\sqrt{M}x^{1/3}$. This, together with condition \eqref{gammadeviation} gives that condition \eqref{e:lhat} occurs with probability at least $\frac{997}{100}$.

The only thing left to show is that condition \eqref{staysinbarrier} occurs with probability at least $\frac{999}{1000}$. Together with polymer ordering (Lemma \ref{l:porder}) and conditions \eqref{gammadeviation} and \eqref{gammadeviation2}, it is enough to show that the geodesic joining $(x,x-Mx^{2/3})$ and $(mx,mx-M(mx)^{2/3})$, and that joining $(x,x+Mx^{2/3})$ and $(mx,mx+M(mx)^{2/3})$ have \emph{transversal fluctuation} at most $\frac{M}{2}x^{2/3}$ with high probability. This follows from Theorem 11.1 of \cite{BSS14} by choosing $M$ a large constant.
\end{proof}

The proof of the next lemma is similar to that of Theorem \ref{t:carsestimate}.

\begin{lemma}\label{l:Agamma}
For any fixed increasing path $\gamma$ from $b_1$ to $b_2$ that satisfies the geometric conditions
\eqref{gammadeviation}, \eqref{gammadeviation2}, \eqref{gammadeviation3} and \eqref{staysinbarrier}, $\P(A_\gamma|\gamma \mbox{ is typical })\geq \frac{97}{100}$.
\end{lemma}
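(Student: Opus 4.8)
The plan is to condition on $\{\gamma\ \mathrm{typical}\}$ throughout and produce a single event of conditional probability at least $\tfrac{99}{100}$ on which all three of $A^1_\gamma,A^2_\gamma,A^3_\gamma$ hold simultaneously; the driving mechanism is the parabolic-loss argument used to prove Theorem \ref{t:carsestimate}. First I would record a deterministic lower bound for $T_{\gamma,x,mx}$ valid on $\{\gamma\ \mathrm{typical}\}$. Writing $p=(x,\gamma(x))$ and $q=(mx,\gamma(mx))$, the definition of $T_{\gamma,x,mx}$ gives $T_{\gamma,x,mx}=T_{a_1,p}+\l(\gamma[x,mx])+T_{q,a_2}$, and the weight condition \eqref{e:ltilde} yields $\l(\gamma[x,mx])\ge \E(T_{p,q})-H\sqrt{M}x^{1/3}$, so that
\[
T_{\gamma,x,mx}\ \ge\ T_{a_1,p}+T_{q,a_2}+\E(T_{p,q})-H\sqrt{M}x^{1/3}.
\]
Dually, any path realizing $F^1_\gamma$ passes through a vertex $w=(x,y)$ with $y\ge x+(2M+S)x^{2/3}$, any path realizing $F^2_\gamma$ through a vertex on the line $u_1=mx$ at transversal distance at least $(2M+S)(mx)^{2/3}$ above $[a_1,a_2]$, and any path realizing $F^3_\gamma$ through a vertex on the line $u_1=x'$ at transversal distance at least $(2M+S)(x')^{2/3}$ above $[a_1,a_2]$. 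Since dropping the requirement that the competing path stay above $\gamma[x,mx]$ only weakens the bound, $F^i_\gamma\le\sup_w\bigl(T_{a_1,w}+T_{w,a_2}\bigr)$ for $i=1,2,3$, the supremum being over the relevant test vertices $w$.

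The key input is the parabolic curvature of the limit shape, exactly as in the proof of Theorem \ref{t:carsestimate}. Each test vertex $w$ lies on a line whose first coordinate is comparable to $x$, at transversal distance at least $(2M+S)x^{2/3}$ from $[a_1,a_2]$ (up to a lower-order correction), and at $\l_1$-distance $\asymp x$ from $a_1$ and $\gg x$ from $a_2$; hence Corollary $9.3$, Lemma $9.4$ and Lemma $9.5$ of \cite{BSS14} (valid for exponential LPP by the remark following Theorem \ref{t:moddevdiscrete}) provide an absolute constant $c_1>0$ with
\[
\E(T_{a_1,w})+\E(T_{w,a_2})\ \le\ \E(T_{a_1,a_2})-c_1S^2x^{1/3},
\]
the loss growing quadratically in the transversal displacement of $w$, so that it only improves as $w$ moves higher. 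Since $p$ and $q$ lie within $O(Mx^{2/3})$ of $[a_1,a_2]$, the same estimate used in the other direction gives $\E(T_{a_1,p})+\E(T_{p,q})+\E(T_{q,a_2})\ge\E(T_{a_1,a_2})-C_2M^2x^{1/3}$.

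Next I would pass from expectations to passage times by way of the moderate-deviation bounds for suprema and infima of passage times over a segment of endpoints (Propositions $10.1$ and $10.5$ of \cite{BSS14}). Breaking the three lines into intervals of length $x^{2/3}$ and summing over them — the quadratically growing parabolic gain absorbing the union bound precisely as in Theorem \ref{t:carsestimate} — one obtains a constant $K$ depending only on the moderate-deviation constants and a conditional event of probability at least $\tfrac{99}{100}$ on which simultaneously $\sup_w\bigl(T_{a_1,w}+T_{w,a_2}-\E(T_{a_1,w})-\E(T_{w,a_2})\bigr)\le Kx^{1/3}$, $T_{a_1,p}\ge\E(T_{a_1,p})-Kx^{1/3}$, and $T_{q,a_2}\ge\E(T_{q,a_2})-Kx^{1/3}$. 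Chaining the four displays above, on this event
\[
F^i_\gamma\ \le\ T_{a_1,p}+T_{q,a_2}+\E(T_{p,q})+\bigl(C_2M^2-c_1S^2+3K+H\sqrt{M}\bigr)x^{1/3}\ <\ T_{\gamma,x,mx}-\sqrt{S}\,x^{1/3}
\]
for $i=1,2,3$, provided $S$ is taken large enough (depending on $H,M$ and the absolute constants $c_1,C_2,K$) that $c_1S^2>C_2M^2+3K+H\sqrt{M}+\sqrt{S}$. This is exactly $A_\gamma=A^1_\gamma\cap A^2_\gamma\cap A^3_\gamma$, so $\P(A_\gamma\mid\gamma\ \mathrm{typical})\ge\tfrac{99}{100}\ge\tfrac{97}{100}$.

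The step I expect to be the main obstacle is not the probabilistic estimate itself but the measurability bookkeeping hidden in ``one conditions on $\{\gamma\ \mathrm{typical}\}$''. Apart from its deterministic geometric part, this event is a function of the weights on the fixed site set $\gamma[x,mx]$, whereas $T_{a_1,p}$ and $T_{q,a_2}$ depend on the weights strictly to the left of $x$ and strictly to the right of $mx$ and so are genuinely independent of it; the delicate point is that the geodesics defining the $F^i_\gamma$ traverse the strip $[x,mx]$. Here one uses that, by construction, these competing paths are required to stay above $\gamma[x,mx]$ — equivalently, that $A_\gamma$ is decreasing in the configuration on $\cZ\setminus\{\gamma\}$ conditionally on the rest, as observed when $A_\gamma$ was defined — so that the $F^i_\gamma$ are governed by the weights off $\gamma$ and the moderate-deviation estimates above apply verbatim conditionally on $\{\gamma\ \mathrm{typical}\}$. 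With this set up, the argument is a line-by-line repetition of the computation behind Theorem \ref{t:carsestimate}.
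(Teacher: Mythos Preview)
There is a genuine gap, and it is a scale mismatch rather than the measurability issue you flag at the end. Recall that in this setting $x=z^{3/2}$ while $a_2$ lies on the line $u_1=z^2$, so the horizontal distance from the barrier region to $a_2$ is of order $z^2$. Consequently the passage times $T_{w,a_2}$ and $T_{q,a_2}$ have fluctuations of order $(z^2)^{1/3}=z^{2/3}$, whereas $x^{1/3}=z^{1/2}$. Your argument requires events of the form $T_{q,a_2}\ge \E(T_{q,a_2})-Kx^{1/3}$ and $\sup_w\bigl(\widetilde T_{a_1,w}+\widetilde T_{w,a_2}\bigr)\le Kx^{1/3}$ to hold with probability $\tfrac{99}{100}$; but since $z^{2/3}/z^{1/2}=z^{1/6}\to\infty$, the moderate deviation bounds (Propositions~10.1 and~10.5 of \cite{BSS14}) applied at the scale of $T_{\cdot,a_2}$ give no control at precision $x^{1/3}$. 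The parabolic gain $c_1S^2x^{1/3}$ you correctly identify is simply swamped by the fluctuation of the long leg to $a_2$, and the chain of inequalities leading to your final display collapses.

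This is precisely why the paper does \emph{not} compare all the way to $a_2$. Instead it reuses the dyadic decomposition from the proof of Theorem~\ref{t:carsestimate}: for the competing path $\zeta$ realizing (say) $F^1_\gamma$, one finds the first dyadic scale $2^{j+1}x$ at which $\zeta$ has returned to typical transversal fluctuation, calls that return point $v$, and then compares $T_{a_1,u}+T_{u,v}$ against $T_{a_1,(x,\gamma(x))}+\ell(\gamma[x,mx])+T_{(mx,\gamma(mx)),v}$. Both routes end at the \emph{same} point $v$, which sits at horizontal distance $O(x)$, so every passage time in the comparison has fluctuation of the correct order $x^{1/3}$, and the continuation from $v$ to $a_2$ cancels. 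Your simplification of dropping the dyadic localization and routing everything through $a_2$ is exactly what breaks the argument; to repair the proof you need to reinstate that localization.
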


\begin{proof}
 Observe that for an increasing path $\gamma$ satisfying the geometric conditions in \eqref{gammadeviation}, \eqref{gammadeviation2}, \eqref{gammadeviation3}, \eqref{staysinbarrier}, the condition that it is typical depends only on the configuration $\Pi_{\{\gamma[x,mx]\}}$. We show that $\P(A^1_\gamma|\gamma \mbox{ is typical})\geq \frac{99}{100}$. The bounds for $A^2_\gamma$ and $A^3_\gamma$ follow similarly and that for $A_\gamma$ follows by taking a union bound. The proof of this is similar to what we did for the proof of Theorem \ref{t:carsestimate}.

See Figure \ref{f:path}. Let $\zeta$ be the best path joining $a_1$ and $a_2$ that is more than a distance of $(2M+S)x^{2/3}$ above the diagonal at $x$ and is above $\gamma[x,mx]$ in $[x,mx]$. Let $\mathcal{W}$ be the straight line segment joining $a_1$ and $a_2$. Choose $\alpha=2^{\frac{1}{6}}$.
For $j\geq 0$, let $B'_j$ denote the event that $\zeta(2^jx)-\mathcal{W}(2^jx)\geq (2M+S)((2\alpha)^jx)^{2/3}$ and $\zeta(2^{j+1}x)-\mathcal{W}(2^{j+1}x)\leq (2M+S)((2\alpha)^{j+1}x)^{2/3}$. It is enough to show that on each of these $B'_j$, the weight of the union of the maximal path joining $a_1$ to $(x,\gamma(x))$, $\gamma[x,mx]$, and the maximal path joining $(mx,\gamma(mx))$ and $(2^{j+1}x,\zeta(2^{j+1}x))$ is larger than the sum of the length of $\zeta[z,2^{j+1}x]$ and $\sqrt{S}x^{1/3}$ with sufficiently high probability.


As before, define $U_r$ as the line segment joining $(2^jx,\mathcal{W}(2^jx)+(2M+S+r)((2\alpha)^jx)^{2/3})$ and $(2^{j}x,\mathcal{W}(2^jx)+(2M+S+r+1)((2\alpha)^jx)^{2/3})$ and $V_r$ as the line segment joining $(2^{j+1}x,\mathcal{W}(2^{j+1}x)+(2M+S-r)((2\alpha)^{j+1}x)^{2/3})$ and $(2^{j+1}x,\mathcal{W}(2^{j+1}x)+(2M+S-r+1)((2\alpha)^{j+1}x)^{2/3})$. Note that $r$ here is used as an index variable and, in particular is not related to the same symbol used in statement of Proposition \ref{l:meetsubinter}. Also recall that $S_1$ was the line segment joining $(x,x-Mx^{2/3})$ and $(x,x+Mx^{2/3})$ and $S_2$ was the line segment joining $(mx,mx-M(mx)^{2/3})$ and $(mx,mx+M(mx)^{2/3})$. Define
$$D_{x,r,r', j}=\sup_{u\in U_r,v\in V_{r'},w_1\in S_1,w_2\in S_2} \biggl(T_{a_1,(2^jx,u)}+T_{(2^jx,u),(2^{j+1}x,v)}-T_{a_1,(x,w_1)}-T_{(mx,w_2),(2^{j+1}x,v)}\biggr)$$
and set 
$$C_{j,r,r'}=\left\{ D_{x,r,r', j} \geq  \l(\gamma[x,mx])-\sqrt{S}x^{1/3}\right\}.$$
Recall that $\ell(\gamma)$ denotes the length of the path $\gamma$. 
%
Computing expectations, it is easy to see that for some constant $c_1$ not depending on $x,S,r,r',j$ (depending on $M$), for all $u\in U_r, v\in V_{r'}$, (observe that $\gamma(x)\in S_1, \gamma(mx)\in S_2$),
\begin{eqnarray*}
\E(T_{a_1,(2^jx,u)})+\E(T_{(2^jx,u),(2^{j+1}x,v)})\leq \E(T_{a_1,(x,\gamma(x))})+\E(T_{(x,\gamma(x)),(mx,\gamma(mx))})+\E(T_{(mx,\gamma(mx)),(2^{j+1}x,v)})&&\\
-c_1(S+r+r')\alpha^{\frac{2j}{3}}(2^jx)^{1/3}.
\end{eqnarray*}
Using the moderate deviation estimates for supremum and infimum of the lengths of a collection of geodesics given in Proposition $10.1$ and Proposition $10.5$ of \cite{BSS14} and the fact that $\gamma$ is a typical path (hence condition \eqref{e:ltilde} holds), this implies, choosing $S$ large enough compared to $M$,
\[\P(C_{j,r,r'})\leq Ce^{-c(\frac{S}{2}+r+r')\alpha^{\frac{2j}{3}}}.\]
Summing over $r,r',j$, and choosing $S$ large enough, gives the result.
\end{proof}

We point out that the argument in the proof of Lemma \ref{l:Agamma} is useful in other contexts also. We already know from Theorem \ref{t:carsestimate} that the transversal fluctuation of a geodesic from $\mathbf{0}$ to $\mathbf{n}$ at $r\ll n$ is  $O(r^{2/3})$. The argument above shows the following stronger fact: any path having transversal fluctuation $\gg r^{2/3}$ at scale $r$ will typically be much shorter than the geodesic. See \cite{BG18} for an application of such a result. 

\medskip

The next lemma states that $A_{\Gamma_0'}$ is a high probability event.
\begin{lemma}\label{l:AGam}
Let $\Gamma_0'$ be the geodesic from $b_1$ to $b_2$. Then $\P(A_{\Gamma_0'})\geq \frac{9}{10}$.
\end{lemma}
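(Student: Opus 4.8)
The plan is to combine the two preceding lemmas with a conditioning argument that handles the subtlety that $A_{\Gamma_0'}$ refers to the (random) geodesic $\Gamma_0'$ rather than a fixed path $\gamma$. First I would note that, by Lemma~\ref{typicalpath}, the event $\{\Gamma_0' \text{ is typical}\}$ has probability at least $\tfrac{99}{100}$; in particular, outside an event of probability $\tfrac{1}{100}$, $\Gamma_0'$ satisfies the geometric conditions \eqref{gammadeviation}, \eqref{gammadeviation2}, \eqref{gammadeviation3} and \eqref{staysinbarrier}, so Lemma~\ref{l:Agamma} is applicable. So it suffices to show $\P(A_{\Gamma_0'}^c \cap \{\Gamma_0' \text{ is typical}\})$ is small, and then union-bound with the $\tfrac{1}{100}$ coming from atypicality.

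The key observation is that the event $A_\gamma$ is, for a fixed admissible path $\gamma$, measurable with respect to (and decreasing in) the configuration on $\cZ \setminus \{\gamma\}$, while the event that $\gamma$ is typical, for an admissible $\gamma$, depends only on the configuration $\Pi_{\gamma[x,mx]}$ on $\gamma$ itself (as recorded in the proof of Lemma~\ref{l:Agamma}). Moreover, the identity of $\Gamma_0'$ as a path is itself determined by the full configuration. I would decompose according to the realized value of $\Gamma_0'$: for each admissible path $\gamma$,
\[
\P\big(A_{\Gamma_0'}^c,\ \Gamma_0' \text{ typical},\ \Gamma_0' = \gamma\big)
= \P\big(A_{\gamma}^c,\ \gamma \text{ typical},\ \Gamma_0' = \gamma\big).
\]
Now on the event $\{\Gamma_0' = \gamma\}$, the path $\gamma$ is the geodesic, which is a statement about the configuration on $\gamma$ together with a comparison that only makes the environment off $\gamma$ \emph{smaller} along competitor paths; more precisely $\{\Gamma_0'=\gamma\}$ is, after conditioning on $\Pi_{\gamma[x,mx]}$ and the configuration outside $\cZ$, a decreasing event in the configuration on $\cZ \setminus \{\gamma\}$. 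Since $A_\gamma^c$ is \emph{increasing} in that same configuration, the FKG inequality gives
\[
\P\big(A_\gamma^c,\ \Gamma_0' = \gamma \,\big|\, \gamma \text{ typical}\big)
\le \P\big(A_\gamma^c \,\big|\, \gamma \text{ typical}\big)\cdot \P\big(\Gamma_0' = \gamma \,\big|\, \gamma \text{ typical}\big)
\le \tfrac{3}{100}\,\P\big(\Gamma_0' = \gamma \,\big|\, \gamma \text{ typical}\big),
\]
using Lemma~\ref{l:Agamma} for the last bound. Summing over all admissible $\gamma$ and using $\sum_\gamma \P(\Gamma_0' = \gamma,\ \gamma \text{ typical}) \le 1$ yields $\P(A_{\Gamma_0'}^c,\ \Gamma_0' \text{ typical}) \le \tfrac{3}{100}$, and hence $\P(A_{\Gamma_0'}) \ge 1 - \tfrac{1}{100} - \tfrac{3}{100} = \tfrac{96}{100} \ge \tfrac{9}{10}$.

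The main obstacle I anticipate is making the monotonicity/conditioning step fully rigorous: one must carefully separate the roles of the configuration on $\gamma[x,mx]$ (which controls typicality and enters $T_{\gamma,x,mx}$) from the configuration on $\cZ \setminus \{\gamma\}$ (in which both $A_\gamma$ and the event $\{\Gamma_0' = \gamma\}$ are monotone, in opposite directions), so that a clean application of FKG is available after the appropriate conditioning. An alternative, perhaps cleaner, route that avoids FKG altogether is to prove the statement for the \emph{lowest} admissible path first: by polymer ordering (Lemma~\ref{l:porder}) it is enough to control $A_\gamma$ where $\gamma$ is the geodesic between the bottom corners of the barrier window, a fixed pair of points, so Lemma~\ref{l:Agamma} applies directly to this deterministic $\gamma$ and transfers to $\Gamma_0'$ via monotonicity of the defining inequalities in the barrier; I would present whichever of these two arguments turns out to require the fewest new estimates.
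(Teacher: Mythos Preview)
Your FKG strategy has a genuine gap at the integration step. FKG applied conditionally on $(\Pi_{\gamma[x,mx]},\Pi_{\cZ^c})$ in the remaining variables $\Pi_{\cZ\setminus\gamma}$ does give, pointwise,
\[
\P\big(A_\gamma^c,\ \Gamma_0'=\gamma \,\big|\, \Pi_{\gamma[x,mx]},\Pi_{\cZ^c}\big)
\le \P\big(A_\gamma^c \,\big|\, \Pi_{\gamma[x,mx]},\Pi_{\cZ^c}\big)\cdot
\P\big(\Gamma_0'=\gamma \,\big|\, \Pi_{\gamma[x,mx]},\Pi_{\cZ^c}\big),
\]
but this does \emph{not} integrate to your displayed inequality
$\P(A_\gamma^c,\Gamma_0'=\gamma\mid\gamma\text{ typical})\le \P(A_\gamma^c\mid\gamma\text{ typical})\,\P(\Gamma_0'=\gamma\mid\gamma\text{ typical})$.
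Both conditional factors on the right-hand side depend nontrivially on $\Pi_{\cZ^c}$ (through $T_{\gamma,x,mx}$, through $F^i_\gamma$, and through competitors of $\gamma$ outside the strip), so the expectation of their product is not the product of their expectations. Lemma~\ref{l:Agamma} only supplies the averaged bound $\P(A_\gamma^c\mid\gamma\text{ typical})\le 3/100$, not a uniform bound on $\P(A_\gamma^c\mid\Pi_{\gamma[x,mx]},\Pi_{\cZ^c})$, and without such uniformity the sum over $\gamma$ cannot be controlled. (Your opening claim that $A_\gamma$ is \emph{measurable} with respect to $\cZ\setminus\{\gamma\}$ is also false --- it is only conditionally monotone there --- which you later correct, but it foreshadows exactly this difficulty.) Your alternative ``lowest path'' route does not work either: $A_\gamma$ compares $F^i_\gamma$ with $T_{\gamma,x,mx}$, and neither side behaves monotonically in $\gamma$ in a way that lets polymer ordering transfer the inequality from one path to another.

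The paper avoids the conditioning altogether by a \emph{uniformization} argument. It introduces high-probability events $D_1,D_2$ (controlling passage times between fixed segments $U_1,U_2,U_3$ and forcing geodesics from $U_2$ to $a_2$ through $U_3$) on which the quantity $T_{\gamma,x,mx}$ varies by at most $cM^2x^{1/3}+6H\sqrt{M}x^{1/3}$ as $\gamma$ ranges over \emph{all} typical paths. Since any path that is above the left wall of $B$ and stays above $B_0$ is automatically above every typical $\gamma$, the quantity $F^1_\gamma$ is essentially $\gamma$-independent on another high-probability event. Thus a single application of Lemma~\ref{l:Agamma} with the slack $2\sqrt{S}x^{1/3}$ (absorbed by choosing $S$ large relative to $M,H$) yields
$\P\bigl(\bigcap_{\gamma\text{ typical}}\{F^1_\gamma<T_{\gamma,x,mx}-\sqrt{S}x^{1/3}\}\bigr)\ge 98/100$,
and one finishes by intersecting with $\{\Gamma_0'\text{ typical}\}$. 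The key idea you are missing is to prove $A_\gamma$ for all typical $\gamma$ simultaneously on one event, so that no decomposition over $\{\Gamma_0'=\gamma\}$ is ever needed.
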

\begin{proof}
Let $U_1$ be the line segment joining $(x,x-Mx^{2/3})$ and $(x,x+Mx^{2/3})$, $U_2$ be the line segment joining $(mx,mx-M(mx)^{2/3})$ and $(mx,mx+M(mx)^{2/3})$ and $U_3$ be the line segment joining $(x',x'-2M(x')^{2/3})$ and $(x',x'+2M(x')^{2/3})$.

Let $D_1$ be the event that the geodesic from $(mx,mx+M(mx)^{2/3})$ to $a_2$ and the geodesic from $(mx,mx-M(mx)^{2/3})$ to $a_2$ are within $2M(x')^{2/3}$ distance from the diagonal at $x'$. By Corollary \ref{carsestimate2} it follows that $\P(D_1)\geq \frac{999}{1000}$. This, together with polymer ordering, ensures that all geodesics from some point in $U_2$ to $a_2$ pass through $U_3$ with probability at least $\frac{999}{1000}$.

Let $D_2$ be the event such that the followings happen:

For all $u_1\in U_1$,
\[|\widetilde{T}_{a_1,u_1}|\leq H\sqrt{M}x^{1/3}.\]

For all $u_2\in U_2$ and $u_3\in U_3$,
\[|\widetilde{T}_{u_2,u_3}|\leq H\sqrt{M}x^{1/3}.\]

By Lemma $7.3$ of \cite{BSS14}, it follows that $\P(D_2)\geq \frac{999}{1000}$.

Observe that for any $v_1,v_2\in U_1$ and $w_1,w_2\in U_2$, $z\in U_3$
\begin{equation}\label{e:expect}
|\E(T_{a_1,v_1})+\E(T_{v_1,w_1})+\E(T_{w_1,z})-
(\E(T_{a_1,v_2})+\E(T_{v_2,w_2})+\E(T_{w_2,z}))|\leq cM^2x^{1/3},
\end{equation}
where $c$ is some absolute positive constant.

Let $\mathcal{Q}$ be the set of increasing paths from $b_1$ to $b_2$ that satisfy the geometric conditions \eqref{gammadeviation}, \eqref{gammadeviation2}, \eqref{gammadeviation3}, \eqref{staysinbarrier}. Fix two paths $\gamma_1,\gamma_2\in \mathcal{Q}$. Let $\Gamma^{(1)}$ and $\Gamma^{(2)}$ be the two best paths from $a_1$ to $a_2$ that coincide with $\gamma_1[x,mx]$ and $\gamma_2[x,mx]$ between $x$ and $mx$ respectively. Let $v_1=(x,\gamma_1(x)), v_2=(x,\gamma_2(x)), w_1=(mx,\gamma_1(mx)), w_2=(mx,\gamma_2(mx)), z_1=(x',\Gamma^{(1)}(x')), z_2=(x',\Gamma^{(2)}(x'))$.  Since $T_{w_2,a_2}\geq T_{w_2,z_1}+T_{z_1,a_2}$ and $T_{w_1,a_2}\geq T_{w_1,z_2}+T_{z_2,a_2}$, it is easy to see that, on $D_1$,
\[|T_{\gamma_1,x,mx}-T_{\gamma_2,x,mx}|\leq \sup_{z\in U_3}|T_{a_1,v_1}+T_{v_1,w_1}+T_{w_1,z}-
(T_{a_1,v_2}+T_{v_2,w_2}+T_{w_2,z})|.\]
If in addition, the configuration on $\gamma_1[x,mx]$ and $\gamma_2[x,mx]$ are such that $\gamma_1,\gamma_2$ are \emph{typical}, then on $D_2$ one has, using \eqref{e:expect},
\begin{equation}\label{e:difftyp}
\sup_{\gamma_1,\gamma_2 \emph{ typical }}|T_{\gamma_1,x,mx}-T_{\gamma_2,x,mx}|\leq cM^2x^{1/3}+6H\sqrt{M}x^{1/3}.
\end{equation}

Next we use Lemma \ref{l:Agamma} to show that $\bigcap_{\gamma \emph{ typical}}\{F^1_\gamma<T_{\gamma,x,mx}-\sqrt{S}x^{1/3}\}$
is a high probability event. Recall the region $B_0$ defined in \eqref{staysinbarrier}. It is easy to see using standard arguments that the geodesic from $(x,(2M+S)x^{2/3})$ to $a_2$ stays above the region $B_0$ between $x$ and $mx$ with probability at least $\frac{999}{1000}$. On this event, and on $D_1\cap D_2$, because of \eqref{e:difftyp}, it follows that (recall that $S$ was chosen much larger compared to $H,M$), if $\{F^1_\gamma<T_{\gamma,x,mx}-2\sqrt{S}x^{1/3}\}$ holds for some $\gamma$ \emph{typical}, then, $\bigcap_{\gamma \emph{ typical}}\{F^1_\gamma<T_{\gamma,x,mx}-\sqrt{S}x^{1/3}\}$ holds. Hence using Lemma \ref{l:Agamma},
\begin{equation}\label{e:allAgamma}
\P\Big(\bigcap_{\gamma \emph{ typical}}\{F^1_\gamma<T_{\gamma,x,mx}-\sqrt{S}x^{1/3}\}\Big)\geq \frac{98}{100}.
\end{equation}

Now we show that $\P(A^1_{\Gamma_0'})\geq \frac{97}{100}$. The bounds for $A^2_{\Gamma_0'}, A^3_{\Gamma_0'}$ follow similarly and that for $A_{\Gamma_0'}$ follows by taking a union bound. It is easy to see that,
\[\P(A^1_{\Gamma_0'})\geq \P\Big(\bigcap_{\gamma \emph{ typical}}\{F^1_\gamma<T_{\gamma,x,mx}-\sqrt{S}x^{1/3}\}\cap \{\Gamma_0' \mbox{ is typical}\}\Big)\]
Since $\P(\Gamma_0' \mbox{ is typical})\geq \frac{99}{100}$ by Lemma \ref{typicalpath}, it follows by using \eqref{e:allAgamma} and taking a union bound that
\[\P(A^1_{\Gamma_0'})\geq \frac{97}{100},\]
completing the proof.
\end{proof}

Let $\Gamma_0$ be the geodesic from $a_1$ to $a_2$.
The crux of the next lemma is that on the event that $G, R_{\gamma}$ and $A_\gamma$ occur and $\gamma$ is \emph{typical}, $\Gamma_0$ merges with $\gamma$.

\begin{lemma}
\label{intersectmeet}
If $\gamma$ is any fixed increasing path from $b_1$ to $b_2$, then on the event $G\cap R_{\gamma}\cap A_\gamma\cap \{\gamma \mbox{ is typical }\}$, $\Gamma_0$, the geodesic from $a_1$ to $a_2$, meets $\gamma$.
\end{lemma}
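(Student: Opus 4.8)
The plan is to argue that on the event $G\cap R_{\gamma}\cap A_{\gamma}\cap\{\gamma\text{ is typical}\}$, the geodesic $\Gamma_0$ from $a_1$ to $a_2$ cannot avoid $\gamma$, so by planarity it must meet $\gamma$. The key point is a deterministic implication: all four favourable events together force $\Gamma_0$ to cross the slab $\cZ$ in a way that is incompatible with staying disjoint from $\gamma$. First I would suppose for contradiction that $\Gamma_0\cap\gamma=\emptyset$. Since $\Gamma_0$ starts at $a_1=(zr,\ldots)$ and $\gamma$ starts at $b_1=(zr,\ldots)$ with $a_1$ above $b_1$ (and similarly $a_2$ above $b_2$), a disjoint $\Gamma_0$ must stay strictly above $\gamma$ throughout $\llbracket x, mx\rrbracket$, in particular throughout the barrier region $\cZ=\{x\le u\le x+x/10\}$.

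Next I would run a competition argument comparing $\Gamma_0$ with the ``surgery'' path $\gamma'$ that agrees with $\gamma[x,mx]$ inside the barrier region and is optimal outside it, whose weight is $T_{\gamma,x,mx}$. There are two cases according to how $\Gamma_0$ behaves at $x$ (and at $mx$ and $x'$): either $\Gamma_0$ passes \emph{below} the top of the barrier $B$ at all three of the relevant vertical lines $x$, $mx$, $x'$, or it passes above one of them. In the second case, $\Gamma_0$ is then a path from $a_1$ to $a_2$ staying above $\gamma[x,mx]$ that exceeds one of the barrier walls or $L_3$, so its weight is at most $F^1_\gamma\vee F^2_\gamma\vee F^3_\gamma< T_{\gamma,x,mx}-\sqrt{S}x^{1/3}$ by $A_\gamma$; but $\Gamma_0$ is the geodesic, so $T_{a_1,a_2}\ge T_{\gamma,x,mx}$, a contradiction. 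In the first case, $\Gamma_0$ enters the barrier at some $u\in L_1$ with $(x,\gamma(x))\preceq u$ (since it is above $\gamma$ and below the top of $B$, using the geometric conditions \eqref{gammadeviation}, \eqref{gammadeviation2}, \eqref{staysinbarrier} on $\gamma$) and exits at some $u'\in L_2$ with $(mx,\gamma(mx))\preceq u'$; thus the portion of $\Gamma_0$ inside $\cZ$ is a left-to-right crossing of the barrier avoiding $\gamma$, so its weight is $\le {}^{\gamma}T_{u,u'}$. Decomposing $T_{a_1,a_2}=T_{a_1,u}+{}^{\gamma}T_{u,u'}+T_{u',a_2}$ (using that $\Gamma_0$ realises these restricted passage times on the respective regions) and comparing with $T_{\gamma,x,mx}\ge T_{a_1,(x,\gamma(x))}+\ell(\gamma[x,mx])+T_{(mx,\gamma(mx)),a_2}$, I would use the wing condition $G$ to control $T_{a_1,u}-T_{a_1,(x,\gamma(x))}$ and $T_{u',a_2}-T_{(mx,\gamma(mx)),a_2}$ by $O(H\sqrt{S}x^{1/3})$ (here one also invokes $L_3$ and planar ordering to route the right-hand comparison through $v\in L_3$), the typicality of $\gamma$ to say $\ell(\gamma[x,mx])$ is within $O(H\sqrt{M}x^{1/3})$ of its mean, and the barrier condition $R_\gamma$ to say ${}^{\gamma}T_{u,u'}\le \E T_{u,u'}-S^4 x^{1/3}\le \E T_{(x,\gamma(x)),(mx,\gamma(mx))} + O(M^2 x^{1/3}) - S^4 x^{1/3}$. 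Since $S$ is chosen large compared to $H$ and $M$, the $S^4 x^{1/3}$ penalty dominates all the $O(\sqrt{S}x^{1/3})$ and $O(M^2 x^{1/3})$ error terms, forcing $T_{a_1,a_2}< T_{\gamma,x,mx}\le T_{a_1,a_2}$, a contradiction.

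The main obstacle I anticipate is the careful bookkeeping in the first case: one must verify that the geometric conditions on $\gamma$ and $\Gamma_0$ (transversal fluctuation bounds at $x$, $mx$, $x'$, together with the containment \eqref{staysinbarrier}) genuinely force the entry/exit points $u,u'$ to satisfy $(x,\gamma(x))\preceq u$ and $(mx,\gamma(mx))\preceq u'$ with $u\in L_1$, $u'\in L_2$ — i.e. that a disjoint $\Gamma_0$ really does produce a bona fide left-to-right crossing of the barrier $B$ above $\gamma$ — and that the passage-time decomposition along $\Gamma_0$ correctly separates into the pieces controlled by $G$, $R_\gamma$, and the weight conditions, with all expectation differences absorbed using the asymptotics $\E T_{u,u'} = d(u,u') + 2\sqrt{(x'-x)(y'-y)} + O(\cdot^{1/3})$ from Section 9 of \cite{BSS14}. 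Once the geometry is pinned down, the inequality chain is a routine comparison exploiting the hierarchy $H, M \ll S$.
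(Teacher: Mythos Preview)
Your proposal is correct and follows essentially the same approach as the paper: a case analysis using $A_\gamma$ to rule out $\Gamma_0$ passing above the barrier walls or $L_3$, followed by a weight comparison using $G$, $R_\gamma$, and typicality to rule out a disjoint crossing through $L_1$ and $L_2$. The only cosmetic difference is that the paper truncates the comparison at the point $u_3\in L_3$ where $\Gamma_0$ meets $L_3$ (comparing $\widehat{T}_{a_1,u_1}+\widehat{T}^{\gamma}_{u_1,u_2}+\widehat{T}_{u_2,u_3}$ against $\widehat{T}_{a_1,(x,\gamma(x))}+\widehat{\ell(\gamma[x,mx])}+\widehat{T}_{(mx,\gamma(mx)),u_3}$), which sidesteps the need to ``route the right-hand comparison through $v\in L_3$'' that you correctly flagged as the delicate bookkeeping step.
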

\begin{proof}First observe that if $\Gamma_0$ gets below $\gamma$ at any point, it has to intersect $\gamma$. Also note that, on $A_\gamma\cap \{\gamma \mbox { is typical }\}$, the maximal path $\Gamma_0$ cannot hit the barrier above $(2M+S)x^{2/3}$ distance from the diagonal at $x$ or above $(2M+S)(mx)^{2/3}$ distance from the diagonal at $mx$ without hitting $\gamma$. Also $\gamma$ is a typical path and hence hits the walls of $B$, if $\Gamma_0(x)$ or $\Gamma_0(mx)$ is below the walls of $B$, it must already intersect $\gamma$. Otherwise, $\Gamma_0$ enters and exits through the left and right walls of $B$. We show that this cannot happen without hitting $\gamma$. See Figure \ref{f:barrier}.

\begin{figure}[htb!]
\centering
\includegraphics[width=0.3\textwidth]{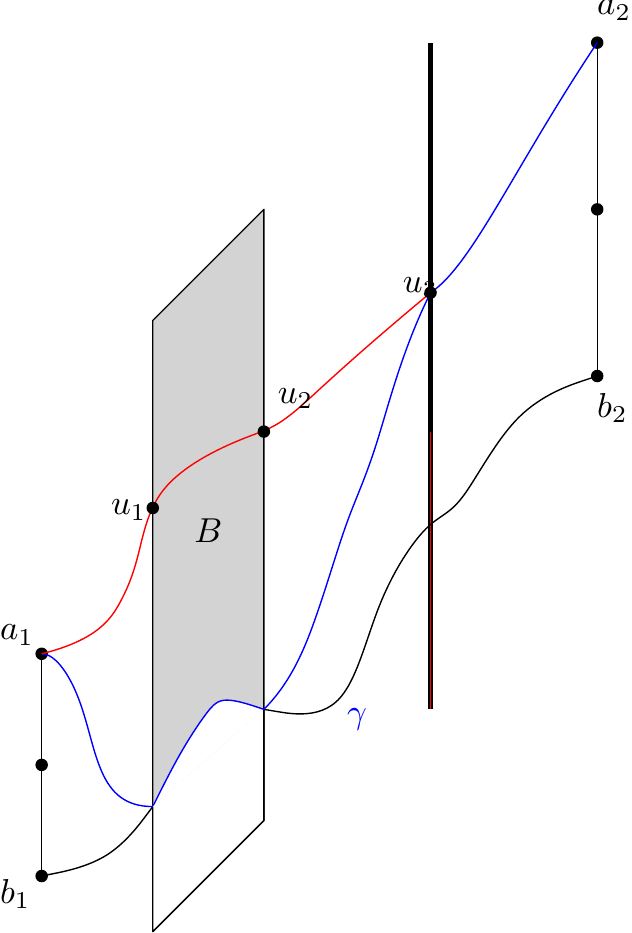}
\caption{Merging of $\Gamma_0$ with $\gamma$ in the proof of Lemma \ref{intersectmeet}. On $A_{\gamma}$, any path that passes above the barrier or the line $L_3$ is uncompetitive, and if any path crosses the barrier and hits the line $L_3$, the red part of the path can be replaced by the blue path which on the good events and presence of the barrier has larger weight and also merges with $\gamma$.}
\label{f:barrier}
\end{figure}

Let the point on the left wall of $B$ where $\Gamma_0$ enters the barrier be $u_1$ and the point on the right wall of $B$ where $\Gamma_0$  exits the barrier be $u_2$. Also the point where $\Gamma_0$ intersects $L_3$ be $u_3$ ($\Gamma_0$ must intersect $L_3$ since on $A_\gamma$ any path passing above $L_3$ is worse than a path that merges with $\gamma$, and if it passes below $L_3$, it intersects with $\gamma$, as $\gamma$ passes through $L_3$). We compare the part of the path $\Gamma_0$ till $L_3$ with $\Gamma_{a_1,(x,\gamma(x))}\cup \gamma[x,mx]\cup \Gamma_{(mx,\gamma(mx)), u_3}$ (by a minor abuse of notation we denote by $\gamma[x,mx]$ the part of $\gamma$ between $(x,\gamma(x))$ and $(mx,\gamma(mx))$, it does not affect our calculations in any way). Hence enough to prove 
\begin{equation}\label{comparecase1}
\widehat{T}_{a_1,u_1}+\widehat{T}^\gamma_{u_1,u_2}+\widehat{T}_{u_2,u_3}\leq
\widehat{T}_{a_1,(x,\gamma(x))}+\widehat{\l(\gamma[x,mx])}+\widehat{T}_{(mx,\gamma(mx)),u_3}.
\end{equation}
This follows because on $G\cap R_{\gamma}$, for $\gamma$ typical,
\[\widehat{T}_{a_1,u_1}\leq S^3x^{1/3},\widehat{T}^\gamma_{u_1,u_2}\leq -S^4x^{1/3}, \widehat{T}_{u_2,u_3}\leq S^3x^{1/3}, \]
\[\widehat{T}_{a_1,(x,\gamma(x))}\geq -S^3x^{1/3},\widehat{\l(\gamma[x,mx])}\geq -H\sqrt{M}x^{1/3},\widehat{T}_{(mx,\gamma(mx)),u_3}\geq -S^3x^{1/3}. \]
Here we have used that since $u_1<(x,x+2Sx^{2/3})$, hence for some absolute constant $c$, $|\E T_{a_1,u_1}-2d(a_1,u_1)|\leq cS^2x^{1/3}$. This, together with the fact that $|\widetilde{T}_{a_1,u_1}|\leq H\sqrt{S}x^{1/3}$ because of the event $G$, one has that $|\widehat{T}_{a_1,u_1}|\leq S^3x^{1/3}$ by choosing $S$ large. Similar arguments apply to $\widehat{T}_{u_2,u_3}, \widehat{T}_{a_1,(x,\gamma(x))}$ and $\widehat{T}_{(mx,\gamma(mx)),u_3}$.

From here equation \eqref{comparecase1} follows by choosing $S$ sufficiently large compared to $M$ and $H$.
\end{proof}

\subsection{Proof of Proposition \ref{l:meetsubinter} }
\label{s:finish}

In this subsection we complete the proof of Proposition \ref{l:meetsubinter}. As stated earlier, without loss of generality, we shall prove it for $r=1$.

\begin{proof}[Proof of Proposition \ref{l:meetsubinter}] Consider the events $G, R_{\gamma}$ and the barrier $B$ and the event $A_{\Gamma_0'}$ as defined in the previous sections.
The proof shall follow by conditioning on the lower path $\Gamma'_0=\gamma$. We first define sets $J_1, J_2, J_3$ of increasing paths $\gamma$ from $b_1$ to $b_2$ together with the configuration $\Pi_{\{\gamma\}}$ on $\{\gamma\}$ such that it is very likely that $\Gamma_0'\in J_1\cap J_2\cap J_3$.

Let $J_1$ denote the set of all \emph{typical} paths from $b_1$ to $b_2$. Then Lemma \ref{typicalpath} gives that $\P(\Gamma_0'\in J_1)\geq \frac{9}{10}$.

Let $J_2$ denote the set of all increasing paths $\gamma$ from $b_1$ to $b_2$  and configurations $\Pi_{\{\gamma\}}$ such that $\P(G|\Gamma_0'=\gamma)\geq \frac{9}{10}$.
Since $\P(G)\geq \frac{99}{100}$, one gets by Markov's inequality,
\[\P(\Gamma_0'\in J_2)\geq \frac{9}{10}.\]

Let $J_3$ denote the set of all increasing paths $\gamma$ from $b_1$ to $b_2$ together with the configurations $\Pi_{\{\gamma\}}$ such that $\P(A_{\Gamma_0'}|\Gamma_0'=\gamma)\geq \frac{2}{3}$. Since by Lemma \ref{l:AGam} $\P(A_{\Gamma_0'})\geq \frac{9}{10}$, by Markov's inequality,
\[\P(\Gamma_0'\in J_3)\geq \frac{7}{10}.\]
Then by union bound, $\P(\Gamma_0'\in J_1\cap J_2\cap J_3)\geq \frac{1}{2}$.

Fix a particular $(\gamma,\Pi_{\gamma}) \in J_1\cap J_2\cap J_3$.

Since
\[\P(A_\gamma|\Gamma_0'=\gamma)=\P(A_{\Gamma_0'}|\Gamma_0'=\gamma)\geq \frac{2}{3},\]
and $\P(G|\Gamma_0'=\gamma)\geq \frac{9}{10}$, hence
\begin{equation}\label{e:GA}
\P(G\cap A_\gamma|\Gamma_0'=\gamma) \geq \frac{1}{2}.
\end{equation}

Also as $R_{\gamma}$ is a decreasing event on the configuration of $\cZ \setminus\{\gamma\}$, and is independent of the configuration in $(\cZ\setminus\{\gamma\})^c$, and $A_\gamma$ and $\Gamma_0'=\gamma$ are also decreasing in the configuration of $\cZ\setminus\{\gamma\}$, by FKG inequality it follows that,
\[\P(R_\gamma\cap A_\gamma \cap \{\Gamma_0'=\gamma\}|(\cZ\setminus{\{\gamma\}})^c)\geq \P(R_\gamma)\P(A_\gamma \cap \{\Gamma_0'=\gamma\}|(\cZ\setminus{\{\gamma\}})^c).\]
As $G$ is $(\cZ)^c$ measurable,
\[\P(R_\gamma\cap A_\gamma \cap \{\Gamma_0'=\gamma\}|G)\geq \P(R_\gamma)\P(A_\gamma \cap \{\Gamma_0'=\gamma\}|G).\]
Hence,
\[\P(R_{\gamma}|G\cap A_\gamma \cap \{\Gamma_0'=\gamma\})\geq\P(R_{\gamma})\geq \beta>0.\]
This, together with \eqref{e:GA}, gives
\[\P(R_{\gamma}\cap G\cap A_\gamma|\Gamma_0'=\gamma)\geq \frac{\beta}{2}=:\beta'>0.\]

Now, it follows from Lemma \ref{intersectmeet} that on the event $R_{\gamma}\cap G\cap A_{\gamma}\cap \{\gamma \mbox{ is typical }\}$, $\Gamma_0$ meets $\gamma$. Hence for any fixed $(\gamma,\Pi_{\gamma}) \in J_1\cap J_2\cap J_3$,
\[
\P(\Gamma_0 \mbox{ meets } \gamma|\Gamma_0'=\gamma)\geq \P(R_{\gamma}\cap G\cap A_\gamma|\Gamma_0'=\gamma)\geq \beta'>0,
\]
where $\beta'=\frac{\beta}{2}$ is an absolute positive constant not depending on the typical path $\gamma$. Hence, by integrating over all $(\gamma,\Pi_{\gamma})\in J_1\cap J_2\cap J_3$,
\[\P(\Gamma_0 \mbox{ meets } \Gamma_0')\geq \beta'\P(\Gamma_0' \in J_1\cap J_2\cap J_3)\geq \beta'/2=:\alpha>0.\]
Also observe that $\alpha$ does not depend on $z$, this completes the proof. 
\end{proof}


%
%

\section{Optimal tail estimate for coalescence of semi-infinite geodesics}
\label{s:coalopt}
 


In this section we prove Theorem \ref{t:coalopt} for the semi-infinite geodesics. Before proceeding with the proof let us briefly recall the setting of the theorem. We had points $v_3=(k^{2/3},-k^{2/3})$ and $v_4=(-k^{2/3},k^{2/3})$, and we denoted by $\Gamma_{v_3}$ and $\Gamma_{v_4}$ the semi-infinite geodesics started respectively from $v_3$ and $v_4$ in the direction $(1,1)$, and $v^*=(v^*_1,v^*_2)$ denoted the point of coalescence of $v_3$ and $v_4$. The distance to coalescence $d(v_3,v_4)$ was defined to be equal to $v^*_1+v^*_2$. As mentioned before to prove that $\P(d(v_3,v_4)>Rk)\approx CR^{-2/3}$ we shall appeal to the translation invariance of the underlying passage time field. 

Observe that it suffices to prove Theorem \ref{t:coalopt} for all sufficiently large $k$. Fix now $k$ sufficiently large. Let us now identify the line $\mathbb{L}$ with $\Z$ via the identification $i\mapsto u_i=\mathbf{0}+i(-1,1)$. We call $u_i\in \mathbb{L}$ a $k$-\textbf{boundary point} if $d(u_i,u_{i+1})>k$; see Figure \ref{f:boundary}. Define a sequence $\{X^{(k)}_{i}\}_{i\in \Z}$ by setting $X_{i}=1$ if $u_i$ is a $k$-boundary point and $0$ otherwise. Observe that translation invariance implies that this is a stationary sequence. The main step of the proof will be the following proposition.

\begin{figure}[htb!]
\centering
\includegraphics[width=0.5\textwidth]{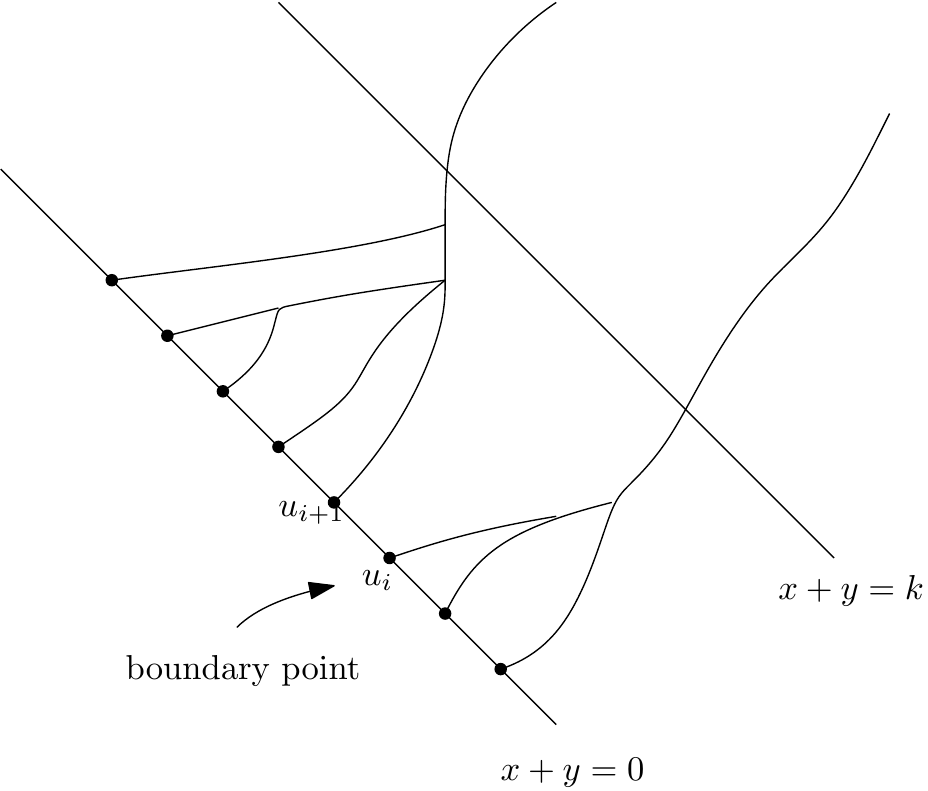}
\caption{Definition of a $k$-boundary point. If semi-infinite geodesic starting from two neighbouring points on $x+y=0$ coalesce above the line $x+y=k$ then one of them is a $k$-boundary point.}
\label{f:boundary}
\end{figure}

\begin{proposition}
\label{p:upperlower}
There exists $C_1,C_2>0$ such that for each $i\in \Z$ and for each $k$, we have 
$$\frac{C_2}{k^{2/3}}\leq \P(X^{(k)}_{i}=1)\leq \frac{C_1}{k^{2/3}}.$$
\end{proposition}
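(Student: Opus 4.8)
The plan is to prove Proposition \ref{p:upperlower} by relating the event $\{X^{(k)}_i = 1\}$ to the distribution of the distance to coalescence $d(v_3, v_4)$ studied in Theorem \ref{t:coalopt}, and then to estimate the probability of that event directly using a Borel--Cantelli / ergodic averaging argument combined with the integrable input. First I would record the key geometric fact illustrated in Figure \ref{f:boundary}: if the semi-infinite geodesics started from two neighbouring points $u_i$ and $u_{i+1}$ on $\mathbb{L}$ coalesce at a point on or above the line $x+y=k$, then the vertex $u_i$ (the lower-left one) must be a $k$-boundary point, i.e.\ $d(u_i, u_{i+1}) > k$; this is just the statement that the coalescence point of two adjacent geodesics is determined by where the Busemann-function-induced competition interface separating their basins of attraction hits the anti-diagonal scale. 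Conversely, a $k$-boundary point forces the adjacent geodesics to coalesce far away. So up to the identification of the relevant points, $\P(X^{(k)}_i = 1) \asymp \P(d(u_i, u_{i+1}) > k)$, and, since $d(v_3, v_4) > Rk$ corresponds (after rescaling $k \mapsto R^{3/2}k$, using that $v_3, v_4$ are at anti-diagonal distance $\asymp k^{2/3}$ apart while $X^{(k)}$ concerns unit-spaced points) to the event that a boundary point of a larger scale occurs, the two statements are essentially equivalent.

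Next I would prove the upper bound $\P(X^{(k)}_i = 1) \leq C_1 k^{-2/3}$. The natural route is a first-moment / averaging argument: for a large $N$, consider the points $u_0, \dots, u_N$ on $\mathbb{L}$. The number of boundary points among them, $\sum_{i=0}^{N-1} X^{(k)}_i$, counts the number of distinct basins of coalescence that the geodesics from $u_0, \dots, u_N$ split into before they all merge. But all these $N+1$ geodesics coalesce into a single semi-infinite geodesic, and the merging happens at various heights; crucially, by Proposition \ref{p:carinfinite} (the variant of the transversal fluctuation estimate for semi-infinite geodesics) and the $n^{2/3}$ scaling, a geodesic started at $u_i$ has transversal fluctuation $O(\ell^{2/3})$ at height $\ell$, so on the anti-diagonal line at height $\asymp N^{3/2}$ the $N+1$ geodesics have effectively spread to within a window of size $O(N)$, i.e.\ they have almost surely all coalesced by then; more precisely, the number of them still distinct at height $\ell$ is of order $\ell^{2/3}/$(spacing), which for unit spacing and $\ell \asymp k^{3/2}$ is $O(k)$. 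Hence by stationarity $N \cdot \P(X^{(k)}_0 = 1) = \E[\sum_{i=0}^{N-1} X^{(k)}_i] = O(N^{2/3})$ after taking $N \asymp k$, giving $\P(X^{(k)}_0 = 1) = O(k^{-2/3})$. The clean way to implement this is: pick the scale $\ell = k$, note that $\sum_{i=-k}^{k} X^{(k)}_i$ is bounded by the number of distinct geodesics among $\Gamma_{u_{-k}}, \dots, \Gamma_{u_k}$ when they cross the line $x+y = 2k$ (say), plus a correction, and that number is $O(k^{2/3})$ with all moments controlled by the transversal fluctuation tail $e^{-cs^2}$; dividing by $2k+1$ and using stationarity yields the bound.

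For the lower bound $\P(X^{(k)}_i = 1) \geq C_2 k^{-2/3}$, I would argue by contradiction / complementarity: if this probability were much smaller than $k^{-2/3}$, then typically a stretch of $\asymp k^{2/3}$ consecutive points on $\mathbb{L}$ would contain no $k$-boundary point, which would force the geodesics from $u_0$ and $u_{k^{2/3}}$ (at anti-diagonal distance $\asymp k^{2/3}$, exactly the set-up of $v_3, v_4$ up to translation) to coalesce below the line $x+y = k$, i.e.\ $d(v_3, v_4) \leq Rk$ for bounded $R$ with probability close to $1$; but this contradicts Pimentel's lower bound $\P(d(v_3,v_4) > Rk) \to 1$ as $R \to 0$, which the excerpt says was proved in \cite{Pim16} and for which the proof also gives a short proof via Proposition \ref{p:carinfinite} (Remark \ref{r:lb}). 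Concretely, $\{d(v_3, v_4) > Rk\}$ is contained in the event that at least one of the $\asymp k^{2/3}$ points on $\mathbb{L}$ between $v_3$-scale and $v_4$-scale is an $(Rk)$-boundary point, so by the union bound $C_1 R^{-1} \geq \P(d(v_3,v_4)>Rk)$... hmm, this needs care with the scaling of $k$ inside $X^{(k)}$, so I would instead directly say: summing $X^{(Rk)}_i$ over the $\asymp k^{2/3}$ relevant indices, a boundary point at scale $Rk$ along that stretch is \emph{necessary} for $d(v_3,v_4) > Rk$, hence $\P(d(v_3, v_4) > Rk) \leq k^{2/3} \P(X^{(Rk)}_0 = 1)$; combined with the already-established lower bound on the coalescence tail this forces $\P(X^{(Rk)}_0 = 1) \geq c (Rk)^{-2/3}$ after re-parametrising.

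The main obstacle I anticipate is making the correspondence between $\{X^{(k)}_i = 1\}$ and the coalescence-distance event fully rigorous in both directions, particularly controlling the \emph{boundary effects} — a $k$-boundary point certifies that two \emph{specific} adjacent geodesics split late, but translating between "number of distinct basins at height $\ell$" and "spatial spread of the geodesic bundle at height $\ell$" requires the monotonicity of geodesics (polymer ordering, Lemma \ref{l:porder}, in its semi-infinite form) together with the local transversal fluctuation bound (Proposition \ref{p:carinfinite}) applied uniformly over the $O(k)$ starting points, and one must rule out the pathological possibility that many basins are extremely narrow. Handling this uniformity and the tail of $\sum X^{(k)}_i$ — so that the first-moment computation is not ruined by a rare event with many boundary points — is where the real work lies; the integrable input enters only through the already-cited $e^{-cs^2}$ transversal fluctuation tail, so no new one-point estimates are needed.
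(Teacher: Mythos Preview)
Your lower bound argument is essentially the paper's: on the event $\{d(w_1,w_2)>k\}$ for $w_1,w_2$ at anti-diagonal separation $\asymp Mk^{2/3}$, at least one of the $\asymp Mk^{2/3}$ intermediate points is a $k$-boundary point, and Proposition \ref{p:carinfinite} (or Pimentel's bound) gives $\P(d(w_1,w_2)>k)\geq 1/3$ for $M$ large; stationarity then yields $\P(X^{(k)}_0=1)\gtrsim k^{-2/3}$.

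The upper bound, however, has a real gap. You write that the number of distinct geodesics from $u_{-k^{2/3}},\dots,u_{k^{2/3}}$ at height $\asymp k$ is $O(1)$ (equivalently, $\sum X^{(k)}_i=O(1)$) ``with all moments controlled by the transversal fluctuation tail $e^{-cs^2}$''. But transversal fluctuation only tells you \emph{where} each geodesic is at height $k$ --- all within a window of width $O(k^{2/3})$ --- not how many of them are still distinct. Nothing prevents $k^{2/3}$ geodesics from hitting $k^{2/3}$ distinct points inside that window; this is exactly the ``many narrow basins'' scenario you flag at the end, and it is not ruled out by Proposition \ref{p:carinfinite}. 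Your closing claim that ``no new one-point estimates are needed'' is therefore incorrect.

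The paper closes this gap with an input you do not mention: Proposition \ref{p:rare}, imported from \cite{BHS18}, which says that the probability of having $\ell$ pairwise \emph{disjoint} geodesics crossing an on-scale $k\times k^{2/3}$ parallelogram is at most $e^{-c\ell^{1/4}}$. Its proof uses that if many disjoint geodesics coexist then one of them is confined to a thin sub-rectangle and is hence uncompetitive, together with the BK inequality; this is genuinely beyond transversal fluctuation. Combining Proposition \ref{p:rare} with Proposition \ref{p:carinfinite} (which ensures the relevant geodesics land on a segment of the correct width) yields Lemma \ref{l:sum}, the tail bound $\P\bigl(\sum_{|i|\le k^{2/3}} X^{(k)}_i\ge\ell\bigr)\le e^{-c\ell^{1/8}}+e^{-ck^{0.001}}$, from which $\E\sum_{|i|\le k^{2/3}} X^{(k)}_i\le C$ and hence $\P(X^{(k)}_0=1)\le C k^{-2/3}$ follow by stationarity. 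Without this disjoint-geodesics estimate your first-moment argument does not go through.
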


We postpone the proof of Proposition \ref{p:upperlower} for now and prove the upper bound of Theorem \ref{t:coalopt} first, the lower bound of Theorem \ref{t:coalopt} will be proved at the end of this section. 

\begin{proof}[Proof of Theorem \ref{t:coalopt}: upper bound]
Fix $R>1$. Clearly, on $\{d(v_3,v_4)>Rk\}$, there must exist $i\in \llbracket -k^{2/3}, k^{2/3}\rrbracket$ such that $u_{i}$ is an $Rk$-boundary point. It follows that
$$\P(d(v_3,v_4)>Rk)=\P\left(\sum_{i=-k^{2/3}}^{k^{2/3}} X^{(Rk)}_{i}>0\right)\leq \E\left[\sum_{i=-k^{2/3}}^{k^{2/3}} X^{(Rk)}_{i}\right] \leq 2k^{2/3}\frac{C_1}{(Rk)^{2/3}}$$
where the final inequality follows from Proposition \ref{p:upperlower}. This completes the proof of the theorem.
\end{proof}

\subsection{Proof of Proposition \ref{p:upperlower}}
We prove Proposition \ref{p:upperlower} in this subsection. This proof is essentially independent of the rest of the paper, except we need to use a variant of Theorem \ref{t:carsestimate}, that also holds for the semi-infinite geodesics. We first record this statement. 

\begin{proposition}
\label{p:carinfinite}
For $v$ on the line $\mathbb{L}:x+y=0$, let $f(v)=(f(v)_1,f(v)_2)$ denote the point the semi-infinite geodesic $\Gamma_{v}$ started from $v$ in the direction $(1,1)$ intersects the line $x+y=k$. For $h>0$, let $A_{h}=A_{h,k}$ denote the event that there exists a point $v$ on $\mathbb{L}$ between $v_3$ and $v_4$ such that $|f(v)_1-f(v)_2|\geq hk^{2/3}$. Then there exists $h_0>0,c>0$ such that for all $h>h_0$ and all sufficiently large $k$ we have 
$\P(A_{h})\leq e^{-ch^2}$.
\end{proposition}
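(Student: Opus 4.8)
The plan is to deduce this from the finite-geodesic estimate of Theorem \ref{t:carsestimate} by approximating the semi-infinite geodesics $\Gamma_v$ on the bounded spatial window $\{x+y \le k\}$ by finite geodesics ending at a far-away reference point on the diagonal, and then taking a union bound over $v \in \mathbb{L}$ between $v_3$ and $v_4$. First I would fix a large scale $N \gg k$ and let $w_N = (N,N)$; by the convergence of finite geodesics to the semi-infinite geodesic in direction $(1,1)$ (recalled in Section \ref{s:bg}), for each fixed $v$ the finite geodesic $\Gamma_{v,w_N}$ agrees with $\Gamma_v$ on the window $\{x+y \le k\}$ for all $N$ large, almost surely. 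So it suffices to bound, uniformly in $N$ large, the probability that some $\Gamma_{v,w_N}$ with $v \in \mathbb{L} \cap \llbracket v_3, v_4 \rrbracket$ has transversal fluctuation at least $hk^{2/3}$ at the line $x+y=k$, and then let $N \to \infty$.

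Next I would reduce the uncountable family $\{v \in \mathbb{L} \cap \llbracket v_3, v_4\rrbracket\}$ to its two endpoints using polymer ordering (Lemma \ref{l:porder}). Since $v_3$ and $v_4$ are at $\ell_1$-distance $\asymp k^{2/3}$ apart on $\mathbb{L}$, and all these geodesics go to the common endpoint $w_N$, the family $\{\Gamma_{v,w_N}\}_{v}$ is sandwiched: for every $v$ between $v_3$ and $v_4$ and every coordinate $\ell$, $\Gamma_{v_3,w_N}(\ell) \le \Gamma_{v,w_N}(\ell) \le \Gamma_{v_4,w_N}(\ell)$ (after reindexing appropriately for the anti-diagonal starting line; here one uses that moving the start point up along $\mathbb{L}$ only pushes the geodesic up, which is exactly the content of Lemma \ref{l:porder} applied with $a,a'$ on the vertical slice through the starting region). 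Hence the event $A_h$ is contained in the union of the corresponding large-fluctuation events for $\Gamma_{v_3,w_N}$ and $\Gamma_{v_4,w_N}$ only — no union bound over intermediate $v$ is needed, just over the two extreme geodesics and over the $O(k^{2/3})$ lattice points on the line $x+y=k$ within the relevant range (or, more efficiently, one phrases the fluctuation event directly in terms of $\Gamma(\ell)$ and $\Gamma^{-1}(\ell)$ at the point where the geodesic crosses $x+y=k$, as in Theorem \ref{t:carsestimate}).

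Now I would apply Theorem \ref{t:carsestimate} (with $\ell \asymp k$, the starting perturbations $|k'| \lesssim k^{2/3}$ being within the allowed range since $v_3,v_4$ are at distance $\asymp k^{2/3}$ from $\mathbf{0}$, and $n = N \to \infty$): for each of the two extreme geodesics, the probability that it deviates by more than $hk^{2/3}$ from the diagonal at spatial scale $\asymp k$ is at most $e^{-ch^2}$, uniformly in $N$ large. Summing the two contributions and relabelling constants gives $\P(A_h) \le e^{-ch^2}$ for $h \ge h_0$ and $k$ large. The main obstacle I anticipate is the bookkeeping in the reduction to finite geodesics — one must check that the almost-sure agreement of $\Gamma_{v,w_N}$ with $\Gamma_v$ on $\{x+y \le k\}$ can be made uniform enough (over the continuum of $v$) that taking $N\to\infty$ commutes with the probability bound; this is handled cleanly by first using polymer ordering to pass to the two endpoints $v_3, v_4$ (a countable, in fact finite, reduction) \emph{before} taking any limit, so that only two applications of the semi-infinite-to-finite geodesic convergence are needed. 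A secondary technical point is translating Theorem \ref{t:carsestimate}, stated for geodesics between points with slope near $1$ in the $(x,y)$ coordinates, to the present anti-diagonal setup and to the crossing of the line $x+y=k$ rather than a vertical line $x=\ell$; this is a routine change of coordinates using that the relevant slope stays bounded away from $0$ and $\infty$, exactly as in the Corollary following Theorem \ref{t:carsbetter}.
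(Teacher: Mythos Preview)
Your approach is correct and takes a genuinely different route from the paper's sketch. The paper argues \emph{directly} with the semi-infinite geodesic: it introduces a slightly tilted reference line $\mathcal{L}:y=(1-k^{-1/3})x$, observes that by the asymptotic direction $(1,1)$ the geodesic $\Gamma_{\mathbf{0}}$ eventually lies to the left of $\mathcal{L}$ while on the bad event it is to the right of $\mathcal{L}$ at the line $x+y=k$, and then reruns the dyadic-scale argument of Theorem~\ref{t:carsestimate} (in the version of Corollary~\ref{carsestimate2} for non-diagonal directions) to show crossing back over $\mathcal{L}$ is exponentially unlikely in $h^2$. You instead reduce to finite geodesics $\Gamma_{v,w_N}$ via the almost-sure local convergence recalled in Section~\ref{s:bg}, use polymer ordering to collapse the supremum over $v\in\llbracket v_3,v_4\rrbracket$ to the two extreme starting points (a step the paper's ``without loss of generality $v=\mathbf{0}$'' leaves implicit, and which is genuinely needed since a naive union bound over $O(k^{2/3})$ points would not be uniform in $k$), and then invoke Theorem~\ref{t:carsestimate} as a black box, passing to the limit $N\to\infty$ via Fatou. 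Your route is arguably cleaner --- it avoids re-opening the dyadic machinery and the auxiliary tilted line --- at the cost of appealing to the finite-to-infinite convergence of geodesics; the paper's route is more self-contained but essentially repeats the proof of Theorem~\ref{t:carsestimate} in a modified setting. Both the polymer-ordering extension to starting points on the anti-diagonal (reducing to Lemma~\ref{l:porder} on the first common vertical slice, as you note) and the translation from vertical slices to the anti-diagonal $x+y=k$ are indeed routine.
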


{We shall not provide a detailed proof of this proposition, but let us indicate how one can obtain this result by arguing as in the proof of Theorem \ref{t:carsestimate}. Without loss of generality take $v=\mathbf{0}$. We need to upper bound $\P(f(0)_1-f(0)_{2}\geq hk^{2/3})$. Let $\mathcal{L}$ denote the straight line $y=(1-k^{-1/3})x$. By definition of $\Gamma_{\mathbf{0}}$, all but finitely many points on it lies to the left of $\mathcal{L}$, while $f(v)$ lies to its right (for $h$ sufficiently large) in the event $f(0)_1-f(0)_{2}\geq hk^{2/3}$. We can now use the strategy of Proof of Theorem \ref{t:carsestimate} to show that it is unlikely for the path to cross the line $\mathcal{L}$, by checking at dyadically increasing scales. Here we need to use the observation that the proof of Theorem \ref{t:carsestimate} works for paths in the direction other than $(1,1)$ as explained in Corollary \ref{carsestimate2}. We omit the details.}


The main input of this section however is the following from \cite{BHS18}, a result obtained by the first and third author jointly with Christopher Hoffman. A slightly stronger version of the result is used in \cite{BHS18} to show the non-existence of bigeodesics in exponential LPP. We need some preparation to describe the result. Let $\mathbb{L}_{0}$ denote the line segment joining $(k^{2/3},-k^{2/3})$ and $(-k^{2/3},k^{2/3})$. For $h\in \N$, let $\mathbb{L}'_{h}$ denote the line segment $(k,k)+2h\mathbb{L}_0$. For points $u,v\in \mathbb{L}_{0}$ we say $u\leq v$ if $v=u+i(-1,1)$ for some $i>0$, and similarly on $\mathbb{L}'_{h}$. For $\ell\in \N$, let $\mathcal{C}_{\ell,h}$ denote the event that  there exists points $u_1\leq u_2\leq \cdots \leq u_{\ell}$ on $\mathbb{L}_0$, and $w_1\leq w_2\leq \cdots \leq w_{\ell}$ on $\mathbb{L}'_{h}$ such that the geodesics $\Gamma_{u_i,w_i}$ are disjoint. We quote the following theorem from \cite{BHS18}.

\begin{proposition}\cite[Corollary 2.8]{BHS18}
\label{p:rare}
There exists $k_0,\ell_0>0$, such that for all $k>k_0, k^{0.01}>\ell>\ell_0$ and all $h\leq \ell^{1/16}$ we have 
$\P(\mathcal{C}_{\ell,h})\leq e^{-c\ell^{1/4}}$
for some positive constant $c$. 
\end{proposition}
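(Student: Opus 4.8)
The plan is to combine a transversal‑fluctuation confinement estimate with a total‑weight (``energy'') comparison. Write $k$ for the common scale, so that $\mathbb{L}_0\subset\mathbb{L}$ is an anti‑diagonal segment of length $\asymp k^{2/3}$ through the origin, $\mathbb{L}'_{h}$ is the anti‑diagonal segment of length $\asymp hk^{2/3}$ through $(k,k)$, and any pair $u\in\mathbb{L}_0$, $w\in\mathbb{L}'_{h}$ has $d(u,w)=2k(1+o(1))$ with slope differing from $1$ by $O(hk^{-1/3})$, so that $\E T_{u,w}=4k-O(h^2k^{1/3})$. The underlying idea: on $\mathcal{C}_{\ell,h}$ there is a disjoint family $\Gamma_{u_1,w_1},\dots,\Gamma_{u_\ell,w_\ell}$; each $T_{u_i,w_i}$ is individually close to its mean $\approx 4k$, so the \emph{total} weight $\sum_i T_{u_i,w_i}$ is nearly $4\ell k$, whereas the total weight of $\ell$ mutually disjoint geodesics — a ``geodesic watermelon'' — must fall below $4\ell k$ by an amount of order $\ell^{5/3}k^{1/3}$, which grows in $\ell$ faster than the fluctuation‑sized corrections in the first bound. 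These are contradictory once $\ell$ is a sufficiently large constant.

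First I would record the confinement step. By Theorem \ref{t:carsestimate} and Corollary \ref{carsestimate2} (for geodesics whose endpoints lie on lines of slope bounded away from $0$ and $\infty$), the geodesic between the two ``corner'' points of the windows has transversal fluctuation at most $sk^{2/3}$ with probability $1-e^{-cs^2}$; taking $s\asymp\ell^{1/8}$ and using a monotonicity/sandwiching argument for geodesics with endpoints ranging in intervals — so that it suffices to control the two extreme geodesics, avoiding a union bound over the $\asymp hk^{4/3}$ endpoint pairs — yields an event $\mathcal{G}_1$ with $\P(\mathcal{G}_1^c)\le e^{-c\ell^{1/4}}$ on which every $\Gamma_{u,w}$, $u\in\mathbb{L}_0$, $w\in\mathbb{L}'_{h}$, stays within transversal distance $C\ell^{1/8}k^{2/3}$ of the diagonal throughout. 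The hypotheses $h\le\ell^{1/16}$ and $\ell\le k^{0.01}$ ensure this tube comfortably contains the target window and has width $\ll k$. It is precisely the $e^{-c(\ell^{1/8})^2}$ here that produces the final exponent $\ell^{1/4}$.

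Next comes the weight lower bound valid for \emph{any} family. Using the one‑point lower‑tail moderate‑deviation bound of Theorem \ref{t:moddevdiscrete} together with regularity of the two‑parameter weight field $(u,w)\mapsto T_{u,w}$ over the $k^{2/3}\times hk^{2/3}$ windows (the correlated profile has fluctuations of order $k^{1/3}$, so its minimum over a window of this size is $\ge 4k-O(tk^{1/3})$ except with probability $e^{-ct}$), one obtains an event $\mathcal{G}_2$ with $\P(\mathcal{G}_2^c)\le e^{-c\ell^{1/4}}$ on which $T_{u,w}\ge 4k-C\ell^{1/4}k^{1/3}$ for all admissible $u,w$ simultaneously; hence on $\mathcal{C}_{\ell,h}\cap\mathcal{G}_2$ one has $\sum_{i=1}^{\ell}T_{u_i,w_i}\ge 4\ell k-C\ell^{5/4}k^{1/3}$. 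For the matching upper bound I would invoke — or reprove via a dyadic slab decomposition of the strip, an induction on the number of paths, and the supremum/infimum moderate‑deviation estimates of Propositions $10.1$ and $10.5$ of \cite{BSS14} in the style of Section \ref{s:cars} — the geodesic‑watermelon estimate: on an event $\mathcal{G}_3$ of overwhelming probability (at least $1-e^{-c\ell^{1/3}}$, say), the maximal total weight of $\ell$ ordered disjoint geodesics between the two windows is at most $4\ell k+C\ell k^{1/3}-c_*\ell^{5/3}k^{1/3}$, the $\ell^{5/3}$ being the cost of packing $\ell$ disjoint paths into a tube of width $\asymp k^{2/3}$ (far wider than the $\asymp\ell^{2/3}k^{1/3}$ the watermelon actually occupies when $\ell\le k^{0.01}$). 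On $\mathcal{C}_{\ell,h}\cap\mathcal{G}_1\cap\mathcal{G}_2\cap\mathcal{G}_3$ the two bounds force $c_*\ell^{5/3}\le C(\ell^{5/4}+\ell)$, i.e.\ $\ell^{5/12}\le C'$, impossible for $\ell\ge\ell_0$; hence this intersection is empty and $\P(\mathcal{C}_{\ell,h})\le\P(\mathcal{G}_1^c)+\P(\mathcal{G}_2^c)+\P(\mathcal{G}_3^c)\le e^{-c\ell^{1/4}}$.

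The main obstacle is the watermelon upper bound: quantifying, with a tail strong enough to compete with $e^{-c\ell^{1/4}}$, that $\ell$ mutually disjoint geodesics between the windows must forfeit $\Omega(\ell^{5/3}k^{1/3})$ in total weight. Carrying this out from the inputs available here means chopping the strip dyadically, controlling within each sub‑strip the maximal total weight of $\ell$ disjoint crossings of two windows at the sub‑strip scale, and summing the per‑path weight deficits across scales while keeping every error term below the target — a routine but genuinely laborious computation. A secondary point requiring care is keeping the weight lower bound free of spurious $\log k$ factors (which is why one appeals to regularity of the weight profile rather than a crude union bound over all $\asymp hk^{4/3}$ endpoint pairs); if one accepts such a factor, the argument above still closes for $\ell\ge(\log k)^2$, and the narrow remaining range $\ell_0\le\ell<(\log k)^2$, where the required bound is only sub‑polynomially small, can be handled by a cruder chaining estimate over the $O(\log k)$ scales between $k^{2/3}$ and $k$.
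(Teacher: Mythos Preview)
The paper does not itself prove this proposition; it quotes it from \cite{BHS18} and offers only a two-sentence sketch: if $\ell$ disjoint geodesics cross a $k\times O(hk^{2/3})$ rectangle, then by pigeonhole some of them are forced into thin sub-rectangles; since a geodesic constrained to a thin rectangle is unlikely to be competitive (weight loss results of \cite{BGH17,BG18}), one then applies the BK inequality to the disjoint occurrence of many such unlikely events to get the stretched-exponential decay.

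Your route is genuinely different. Rather than a per-path pigeonhole plus BK, you set up a global energy comparison: on the one hand each $T_{u_i,w_i}$ is individually a geodesic weight, so $\sum_i T_{u_i,w_i}\ge 4\ell k - C\ell^{5/4}k^{1/3}$ on a good event; on the other hand the maximum total weight of any $\ell$ disjoint crossings of the strip (a geodesic watermelon) is at most $4\ell k - c_*\ell^{5/3}k^{1/3}$ on a good event; and $\ell^{5/3}\gg \ell^{5/4}$ forces a contradiction. The arithmetic closes, and the confinement step with $s\asymp\ell^{1/8}$ correctly produces the $e^{-c\ell^{1/4}}$ target.

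The honest gap you have already flagged is the watermelon upper bound with the required tail. This is a real theorem (now established in work of Basu--Ganguly--Hammond--Hegde), but it is not lighter than the result you are trying to prove: deriving it from the inputs available in this paper via your dyadic-slab induction is at least as much work as the pigeonhole/thin-rectangle/BK argument of \cite{BHS18}, and arguably more, since you must control a supremum over all $\ell$-tuples of disjoint paths with endpoints ranging in the windows rather than a single constrained path. By contrast, the \cite{BHS18} approach localises the difficulty to one path at a time (thin-rectangle loss) and then gets the exponent in $\ell$ essentially for free from BK. So your argument is correct in outline and conceptually clean, but it trades an elementary correlation inequality for a substantially heavier analytic input; if you intend to make it self-contained, the watermelon step is where most of the effort will go.
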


Observe that this proposition immediately implies that in the same set-up $\P(\mathcal{C}_{\ell,h})\leq e^{-c\ell^{1/4}}+e^{-ck^{0.001}}$ for all $\ell>\ell_0$ and $h\leq k^{0.0005}$.  The main idea of the proof of Proposition \ref{p:rare} is the following: if there are too many disjoint geodesics across a rectangle of size $k\times k^{2/3}$, there must be one which is constrained to be in a thin rectangle. The proof can be completed with using the by now well-known fact that paths restricted to thin rectangles are unlikely to be competitive \cite{BGH17, BG18}, together with an application of BK inequality. See \cite{BHS18} for the detailed argument, we shall omit this proof. 

We shall now complete the proof of Proposition \ref{p:upperlower} using Proposition \ref{p:rare}. First we need the following lemma.

\begin{lemma}
\label{l:sum}
There exists $k_0,l_0$ such that for all $k>k_0$, and $\ell>\ell_0$, we have 
$$\P\left(\sum_{i=-k^{2/3}}^{k^{2/3}} X^{(k)}_{i} \geq \ell \right) \leq e^{-c\ell^{1/8}}+e^{-ck^{0.001}}.$$
\end{lemma}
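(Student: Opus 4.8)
\textbf{Proof plan for Lemma \ref{l:sum}.}

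The plan is to relate the event $\{\sum_{i=-k^{2/3}}^{k^{2/3}} X^{(k)}_i \geq \ell\}$ to the event $\mathcal{C}_{\ell',h}$ of Proposition \ref{p:rare} for a suitable $\ell' \asymp \ell$ and $h$ a large constant, so that the exponential bound there can be transferred. First I would unpack the meaning of a $k$-boundary point: if $u_i$ is a $k$-boundary point, then $d(u_i,u_{i+1}) > k$, which (by the coalescence structure recalled in Section \ref{s:bg}, and as illustrated in Figure \ref{f:boundary}) means the semi-infinite geodesics $\Gamma_{u_i}$ and $\Gamma_{u_{i+1}}$ have not yet met by the time they cross the line $x+y=k$. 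Consequently, if there are at least $\ell$ $k$-boundary points among $u_{-k^{2/3}}, \ldots, u_{k^{2/3}}$, then the semi-infinite geodesics from those $\ell$ (say) points, restricted up to the line $x+y = k$, are pairwise disjoint: between two consecutive boundary points the corresponding geodesics have not coalesced below height $k$, and monotonicity/planarity of the coalescing tree forces the geodesics from the boundary points themselves to be disjoint up to that line. This gives $\ell$ disjoint geodesic segments starting from ordered points on $\mathbb{L}_0$ and ending on the line $x+y=k$.

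The next step is to upgrade these semi-infinite geodesic segments to genuine finite geodesics between $\mathbb{L}_0$ and $\mathbb{L}'_h$ for a constant $h$, so that Proposition \ref{p:rare} (in the form with the extra $e^{-ck^{0.001}}$ term noted right after its statement, valid for $h \le k^{0.0005}$) applies. On the event $A_h^c$ from Proposition \ref{p:carinfinite} — which has probability at least $1 - e^{-ch^2}$ — every semi-infinite geodesic from a point of $\mathbb{L}_0$ meets the line $x+y=k$ within transversal distance $hk^{2/3}$ of the diagonal, hence inside $\mathbb{L}'_{h'}$ for some fixed multiple $h'$ of $h$. Thus on $A_{h'}^c$, the existence of $\ell$ $k$-boundary points forces the event $\mathcal{C}_{\ell,h''}$ for an appropriate constant $h''$ (the points $u_i$ on $\mathbb{L}_0$ are ordered, and their geodesic continuations up to $\mathbb{L}'_{h''}$ are disjoint and land in the correct order by planarity). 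Choosing $h''$ a large enough absolute constant and using $\ell \le k^{0.01}$ (for $\ell$ in this range; for larger $\ell$ the bound $e^{-c\ell^{1/8}}$ is weaker than what one already gets by capping $\ell$ at $k^{0.01}$, so one can reduce to that case), Proposition \ref{p:rare} gives $\P(\mathcal{C}_{\ell,h''}) \le e^{-c\ell^{1/4}} + e^{-ck^{0.001}}$. Combining,
$$\P\left(\sum_{i=-k^{2/3}}^{k^{2/3}} X^{(k)}_i \geq \ell\right) \leq \P(A_{h'}) + \P(\mathcal{C}_{\ell,h''}) \leq e^{-ch^2} + e^{-c\ell^{1/4}} + e^{-ck^{0.001}},$$
and since $h = h'$ is a fixed constant one absorbs $e^{-ch^2}$ into the constants, or rather — to get a bound decaying in $\ell$ — one takes $h$ itself to grow slowly with $\ell$, say $h \asymp \ell^{1/8}$, which is still $\le k^{0.0005}$ in the relevant range and keeps the Proposition \ref{p:rare} hypothesis $h'' \lesssim \ell^{1/16}$ satisfied after adjusting the exponent; this yields the stated $e^{-c\ell^{1/8}} + e^{-ck^{0.001}}$.

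The main obstacle I expect is the bookkeeping in the reduction: making precise that $\ell$ boundary points really do yield $\ell$ \emph{pairwise disjoint} geodesics up to the line $x+y=k$ (as opposed to merely $\ell$ geodesics that are not all mutually coalesced), and that these extend to disjoint \emph{finite} geodesics terminating on $\mathbb{L}'_{h''}$ in the correct cyclic order required by the definition of $\mathcal{C}_{\ell,h''}$. This uses the planar structure of the coalescing geodesic tree — geodesics from ordered points on a line, run in a common direction, cannot cross without merging — together with the transversal fluctuation control of Proposition \ref{p:carinfinite} to confine the endpoints. A secondary technical point is matching the exponents: Proposition \ref{p:rare} has hypotheses $\ell_0 < \ell < k^{0.01}$ and $h \le \ell^{1/16}$, so one must choose the slowly-growing $h$ and possibly truncate $\ell$ at $k^{0.01}$, which is harmless since for $\ell \ge k^{0.01}$ the right-hand side $e^{-c\ell^{1/8}}$ is dominated (up to constants) by $e^{-ck^{0.001}}$ anyway.
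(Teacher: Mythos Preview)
Your proposal is essentially the paper's proof: decompose the event into $A_h$ (semi-infinite geodesics leave the window of width $\asymp hk^{2/3}$ at the line $x+y=k$) and $\mathcal{C}_{\ell,h}$ (many disjoint finite geodesics across the slab), bound the first by Proposition~\ref{p:carinfinite} and the second by Proposition~\ref{p:rare}, and choose $h$ to balance. The paper takes $h=\min\{\ell^{1/16},k^{0.0005}\}$ directly, so that $e^{-ch^2}\le e^{-c\ell^{1/8}}+e^{-ck^{0.001}}$ and the hypothesis $h\le \ell^{1/16}$ of Proposition~\ref{p:rare} is built in; your suggestion $h\asymp \ell^{1/8}$ is a slip (it violates $h\le \ell^{1/16}$), but you already flag that the exponent needs adjusting, and the correct choice $h=\ell^{1/16}$ gives exactly $h^2=\ell^{1/8}$ as required.
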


\begin{proof}
Fix $\ell$ sufficiently large and set $h=\min\{{\ell}^{1/16}, k^{0.0005}\}$. Now observe that on the event 
$$\left\{\sum_{i=-k^{2/3}}^{k^{2/3}} X^{(k)}_{i} \geq \ell \right\}$$ we must either have that the event $A_{h}$ from Proposition \ref{p:carinfinite} holds, or the event $C_{{\ell},h}$ from Proposition \ref{p:rare} holds. Observe that, by Proposition \ref{p:carinfinite} the probability of the first event is bounded by $e^{-c{\ell}^{1/8}}+e^{-ck^{0.001}}$, and by Proposition \ref{p:rare} the probability of the second event is also bounded by $e^{-c{\ell}^{1/8}}+e^{-ck^{0.001}}$. The proof is completed by taking a union bound. 
\end{proof}

We are now ready to prove Proposition \ref{p:upperlower}.

\begin{proof}[Proof of Proposition \ref{p:upperlower}]
For the upper bound simply note that by translation invariance $\P(X^{(k)}_{i}=1)$ is independent of $i$. The proof is now completed by noting that $\sum_{i=-k^{2/3}}^{k^{2/3}} X^{(k)}_{i}\leq 2k^{2/3}$ and Lemma \ref{l:sum} implies that  $\E \sum_{i=-k^{2/3}}^{k^{2/3}} X^{(k)}_{i} \leq C$ for some large constant $C$ uniformly in all large $k$.

For the lower bound, observe the following. By the same argument as in the first part, it suffices to prove that there exists $M$ sufficiently large such that with $w_1=(Mk^{2/3},-Mk^{2/3})$ and $w_2=(-Mk^{2/3},Mk^{2/3})$, we have for all $k>0$ 
\begin{equation}
\label{e:lb}
\P(d(w_1,w_2)>k)\geq \frac{1}{3},
\end{equation}
which ensures that with probability bounded away from $0$, there is at least one boundary point out of the $2Mk^{2/3}$ between $w_1$ and $w_2$.
The existence of such an $M$ follows from Proposition \ref{p:carinfinite} by noticing that for $M$ sufficiently large the semi-infinite geodesic from $w_1$ in the direction $(1,1)$ hits the line $x+y=k$ below the point $(\frac{k}{2}, \frac{k}{2})$ with probability at least $2/3$, whereas the semi-infinite geodesic from $w_1$ in the direction $(1,1)$ hits the line $x+y=k$ above the point $(\frac{k}{2}, \frac{k}{2})$ with probability at least $2/3$.
\end{proof}

\begin{remark}
\label{r:lb}
Observe that using the same argument as in the proof of the lower bound in the above proposition, it follows (with notations as in Theorem \ref{t:coalopt}) that one has $\limsup_{k\to \infty} \P(d(v_3,v_4)\leq Rk) \leq e^{-c/R}$ for some constant $c>0$, for $R$ small. It recovers the lower bound on distance to coalescence obtained by \cite{Pim16}, with a better quantitative estimate.  
\end{remark}

\subsection{Lower Bound in Theorem \ref{t:coalopt}}
It remains to prove the lower bound in Theorem \ref{t:coalopt}. As mentioned before, this has already been proved by Pimentel \cite{Pim16, Pim17} using a duality formula but we provide a short alternative proof using our techniques.



%
%


\begin{proof}[Proof of Theorem \ref{t:coalopt}: lower bound]
Starting as in the proof of the upper bound, with same notations, we are required to lower bound $\P\left(\sum_{i=-k^{2/3}}^{k^{2/3}} X^{(Rk)}_{i}>0\right)$. Let $M$ be as in \eqref{e:lb}. By translation invariance it follows that 
$$MR^{2/3}\P\left(\sum_{i=-k^{2/3}}^{k^{2/3}} X^{(Rk)}_{i}>0\right) \geq \P(d(w'_1,w'_2)>Rk)>\frac{1}{3}$$
where $w'_1=(M(Rk)^{2/3},-M(Rk)^{2/3})$ and $w'_2=(-M(Rk)^{2/3},M(Rk)^{2/3})$. The lower bound $\P\left(\sum_{i=-k^{2/3}}^{k^{2/3}} X^{(Rk)}_{i}>0\right) \geq \frac{1}{3M}R^{-2/3}$ is now immediate.
\end{proof}

An alternative proof of the above can also be obtained by using second moment method.

\bibliography{slowbond}
\bibliographystyle{plain}

\Addresses

\end{document}